\documentclass[13pt,oneside]{article}
\usepackage{times,amsmath,amsbsy,amssymb,amscd,mathrsfs,graphicx}
\usepackage{amssymb}
\usepackage{amsmath,bm}
\usepackage{amsmath,amsthm}
\usepackage{subcaption}
\usepackage{framed}
\usepackage{xcolor,color}
\usepackage[font=small,labelfont=md,textfont=it]{caption}
\usepackage{floatrow,float}
\usepackage[titletoc, title]{appendix}
\usepackage[colorlinks,linkcolor=blue,citecolor=blue]{hyperref}
\usepackage{longtable}
\usepackage{pgfplots}
\usepackage{diagbox}
\usepackage{booktabs,makecell,multirow}
\usepackage[capitalise, nosort]{cleveref}
\usepackage{algorithm}
\usepackage{algorithmic}
\usepackage{amsmath,bm}
\usepackage{cases}
\usepackage{geometry}
\usepackage{extarrows}
\usepackage{tcolorbox}
\usepackage{bbm}
\usepackage{syntonly}
\usepackage[figuresright]{rotating}
 \usepackage{epstopdf}

\crefname{equation}{}{}
\crefname{lem}{Lemma}{Lemmas}
\crefname{thm}{Theorem}{Theorems}

\newcommand{\R}{\,{\mathbb R}}
\newcommand{\diag}[1]{{\rm diag}\left({#1} \right)}
\newcommand{\triu}[1]{{\rm triu}\left({#1} \right)}

\newcommand{\rank}[0]{ {\mathrm {rank}\,}}

\newcommand{\dual}[1]{\left\langle {#1} \right\rangle}

\newcommand{\myspan}{\,{\rm span}}
\newcommand{\gras}[1]{\,{ \bf Grass}({#1})}
\newcommand{\appker}[1]{\,{\bf ker}_\epsilon({#1})}
\newcommand{\myker}[1]{\, {\bf ker}({#1})}
\newcommand{\dappker}[1]{\,\widehat{{\bf ker}}_\epsilon({#1})}
\newcommand{\row}[1]{\,{\bf row}({#1})}

\newcommand{\argmin}[0]{ {\mathop{{\rm  argmin}}\,}}

\newcommand{\ran}[1]{ {\bf range}({#1})}
\newcommand{\innerA}[1]{\left({#1} \right)_{\!A}}

\newcommand{\inner}[1]{\left({#1} \right)}

\newcommand{\nm}[1]{\left\lVert {#1} \right\rVert}
\newcommand{\snm}[1]{\left\lvert {#1} \right\rvert}

\newcommand{\ssnm}[1]
{
	\left\vert\kern-0.25ex
	\left\vert\kern-0.25ex
	\left\vert
	{#1}
	\right\vert\kern-0.25ex
	\right\vert\kern-0.25ex
	\right\vert
}

\makeatletter
\def\spher@harm#1{%
	\vbox{\hbox{%
			\offinterlineskip
			\valign{&\hb@xt@2\p@{\hss$##$\hss}\vskip.2ex\cr#1\crcr}%
		}\vskip-.36ex}%
}
\def\gshone{\spher@harm{.}}
\def\gshtwo{\spher@harm{.&.}}
\def\gshthree{\spher@harm{.&.&.}}
\let\gsh\spher@harm
\makeatother

\newtheorem{coro}{Corollary}[section]

\newtheorem{Def}{Definition}[section]

\newtheorem{assum}{Assumption}

\newtheorem{lem}{Lemma}[section]

\newtheorem{rem}{Remark}[section]

\newtheorem{thm}{Theorem}[section]

\newtheorem{eg}{Example}

\newcolumntype{I}{!{\vrule width 1,5pt}}
\newlength\savedwidth

\newlength\savewidth

\newcounter{mnote}
\let\oldmarginpar\marginpar
\renewcommand\marginpar[1]
{\-\oldmarginpar[\raggedleft\footnotesize #1]{\raggedright\footnotesize #1}}

\makeatletter\def\@captype{table}\makeatother

\newfloatcommand{capbtabbox}{table}[][\FBwidth]
\usepackage[T1]{fontenc}
\usepackage[utf8]{inputenc}
\usepackage{authblk}
\usepackage[all]{xy}

\usepackage{graphicx}
\usepackage{tikz}
\tikzset{elegant/.style={smooth,thick,samples=50,black}}
\tikzset{eaxis/.style={->,>=stealth}}

\usepackage[misc]{ifsym}

\begin{document}
	\title{
	\Large \bf Robust Kaczmarz methods for nearly singular linear systems\thanks{This work was supported by the National Natural Science Foundation of China (Grant No. 12401402), the Science and Technology Research Program of Chongqing Municipal Education Commission (Grant Nos. KJZD-K202300505), the Natural Science Foundation of Chongqing (Grant No. CSTB2024NSCQ-MSX0329) and the Foundation of Chongqing Normal University (Grant No. 22xwB020).}}

	\author[,1]{Yunying Ke\thanks{Email: 2023110510025@stu.cqnu.edu.cn}}
\author[,1,2]{Hao Luo \thanks{Email: luohao@cqnu.edu.cn; luohao@cqbdri.pku.edu.cn}}
\affil[1]{National Center for Applied Mathematics in Chongqing, Chongqing Normal University, Chongqing, 401331, China}
\affil[2]{Chongqing Research Institute of Big Data, Peking University,  Chongqing, 401121, China}

	\date{\today}
\maketitle

	\begin{abstract}	
	The Kaczmarz method is an efficient iterative algorithm for large-scale linear systems. However, its linear convergence rate suffers from ill-conditioned problems and is highly sensitive to the smallest nonzero singular value. In this work, we aim to extend the classical Kaczmarz to nearly singular linear systems that are row rank-deficient. We introduce a new concept of nearly singular property by treating the row space as an unstable subspace in the Grassman manifold. We then define a related important space called the approximate kernel, based on which a robust kernel-augmented Kaczmarz (KaK) is introduced  via the subspace correction framework and analyzed by the well-known Xu--Zikatanov identity. To get an implementable version, we further introduce the approximate dual kernel and transform KaK into an equivalent kernel-augmented coordinate descent. Furthermore, we develop an accelerated variant and establish the improved rate of convergence matching the optimal complexity of first-order methods. Compared with existing methods, ours achieve uniform convergence rates for nearly singular linear systems, and the robustness has been confirmed by some numerical tests.
\end{abstract}

%	\tableofcontents

%\input{../1-intro}
\section{Introduction}
\label{intro}
%Linear systems are widely used in areas such as image/signal processing \cite{1996Matrix,boyd2018introduction}, economics \cite{judd1998numerical}, and control systems \cite{aastrom2021feedback}, where their solutions help make important decisions. Therefore, developing efficient and accurate solvers has both theoretical and practical value.
%The need for efficient iterative methods has grown with the development of computing, as they often work better than direct methods on large systems \cite{benzi2002preconditioning,davis2011university}. The development of iterative methods has progressed through several key breakthrough—from earlier ones like Jacobi and Gauss–Seidel, to conjugate gradient for symmetric systems \cite{1952Methods}, and Krylov methods like GMRES for nonsymmetric problems \cite{saad_gmres_1986}. Modern versions are now also used in fields including machine learning \cite{saad_flexible_1993}.

The Kaczmarz method \cite{karczmarz1937angenaherte} is a widely used iterative solver for large linear systems. As a row-action approach, it exactly satisfies one equation per iteration while maintaining very low computational cost per step. This method is highly efficient and easy to implement, aligning well with the spirit of the algebraic reconstruction technique \cite{herman2009fundamentals}. Consider a consistent linear system:
\begin{equation}
	\label{Ax=b}
	Ax=b,
\end{equation}
where $b\in \mathbb{R}^{m}$ belongs to $\ran{A}$, the range of $A\in \mathbb{R}^{m \times n}$. Starting from an arbitrary initial vector $x_{0}\in\R^n$, the classical Kaczmarz method \cite{karczmarz1937angenaherte} reads as follows
\begin{equation}
	\label{eq:kacz-intro}
	x_{k+1}= x_k+\frac{b_i-A_{(i)} x_k}{\|A_{(i)}\|^2}A_{(i)}^\top,\quad i = k {\rm\,\,mod\,\,} m,
	%\left\{
	%\begin{aligned}
	%{}&	x_{k}^0=x_k,\\\
	%{}&	x_{k}^{i} = x_k^{i-1}+\frac{b_i-A_{(i)} x_k^{i-1}}{\|A_{(i)}\|^2}A_{(i)}^\top,\quad 1\leq i\leq m,\\
	%{}&	x_{k+1} = x_{k}^m,
	%\end{aligned}
	%\right.
\end{equation} 
where \({A}_{(i)} \) denotes the \( i \)-th row of \({A} \), and \( b_i \) the \( i \)-th component of \( {b} \). Geometrically, at the $k$-th iteration, the  update $x_{k+1}$ is the orthogonal projection of $x_k$ onto the hyperplane $H_i = \{ x \in \mathbb{R}^n:\, A_{(i)}x = b_i \}$, where $i = k {\rm\,\,mod\,\,} m$.

The randomized strategy serves as one effective approach to accelerate the convergence of the Kaczmarz method \cref{eq:kacz-intro}. 
%Empirical evidence indicates that using a randomized row selection sequence for matrix $A$, rather than a deterministic order, typically leads to faster convergence rates \cite{feichtinger_titlenew_1992,2002The}. 
Strohmer and Vershynin \cite{strohmer_randomized_2007} proposed  the randomized Kaczmarz (RK) 
\begin{equation}
	\label{eq:rand-kacz-intro}
	%	\left\{
	%	\begin{aligned}
		%		{}&	x_{k}^0=x_k,\\\
		%		{}&	
		x_{k+1} = x_k+\frac{b_{i_k}-A_{(i_k)} x_k}{\|A_{(i_k)}\|^2}A_{(i_k)}^\top,
		%		\\
		%		{}&	x_{k+1} = x_{k}^m,
		%	\end{aligned}
	%	\right.
\end{equation}
which selects randomly a row index $i_k\in\{1,\cdots,m\}$ and achieves an exponential rate with the contraction factor $1-1/\kappa^2(A)$ (see also \cite{leventhal_randomized_2010}), where $\kappa(A):=\|A\|_F/\sigma^+_{\min}(A)$ and $\sigma^+_{\min}(A)$ are respectively the {\it scaled condition number} \cite{demmel_theprob_1988} and the smallest nonzero singular value of $A$.  After that, RK has found its applications in CT image reconstruction \cite{bicer_trace_2017} and inspired subsequently various extensions, including greedy RK methods \cite{bai2018greedy,gower2021adaptive,su2023convergence,su2024greedy}, randomized sparse Kaczmarz methods \cite{chen2021regularized,schopfer2019linear,zeng_stochastic_2026}, and randomized block Kaczmarz methods \cite{gower2015randomized,necoara2019faster,2014Paved,xiang_randomized_2025,xie2025randomized,zeng2023randomized}.

On the other hand, it is well-known that 
the Kaczmarz method \cref{eq:kacz-intro} is equivalent to applying the coordinate descent (CD) method  to the dual problem 
\begin{equation}\label{eq:dual-prob}
	\min_{y\in \R^{m}}\,g(y)=\frac{1}{2}\|A^{\top}y\|^{2}+b^{\top}y.
\end{equation}
Indeed, for this unconstrained optimization problem, the CD iteration is given by
\begin{equation}
	\label{eq:cd-intro}
	y_{k+1} = y_{k}-sU_{i}U_i^\top \nabla g(y_{k}),\quad i = k {\rm\,\,mod\,\,} m,
	%	\left\{
	%	\begin{aligned}
		%		{}&	y_{k}^0=y_k,\\\
		%		{}&	y_{k}^{i} = y_{k}^{i-1}-sU_{i}U_i^\top \nabla g(y_{k}^{i-1}),\quad 1\leq i\leq m,\\
		%		{}&	y_{k+1} = y_{k}^m,
		%	\end{aligned}
	%	\right.
\end{equation} 
%\begin{equation}\label{eq:cd-intro}
%		y_{k+1}=y_{k}-sU_{i}U_i^\top \nabla g(y_{k}),
%\end{equation}
where $ s>0$ denotes the step size, and $ U_{i} $ the $i$-th column of the identity matrix of order $m$. If $s = \|A_{(i)}\|^{-2}$, then multiplying both sides of \cref{eq:cd-intro} by $ -A^{\top} $ recovers the classical Kaczmarz iteration \cref{eq:kacz-intro}. Nesterov \cite{nesterov_efficiency_2012} proposed a randomized version of \cref{eq:cd-intro} with the rate $O(1/k)$  for general convex objectives. The accelerated coordinate descent method (ACDM) was further developed in \cite{nesterov_efficiency_2012} to achieve a faster rate $O(1/k^2)$. Applying ACDM to \cref{eq:dual-prob} yields the accelerated randomized Kaczmarz (ARK) \cite{lee_efficient_2013,2015An}, which the improved contraction factor $1 - m^{-1/2}/\kappa(A)$. For more related works, we refer to  \cite{fercoq2015accelerated,leventhal_randomized_2010,loizou2020momentum,tondji_acceleration_2024,Wright2015,zeng2024adaptive} and the references therein.

Also, as pointed out in \cite{Griebel2012}, the Kaczmarz method \cref{eq:kacz-intro} is a special instance of the Schwarz iterative methods \cite{griebel_abstract_1995}, which are also known as the subspace correction methods \cite{1992Iterative,xu-2001,xu_method_2002}.
Under such a framework, Oswald and Zhou \cite{oswald_convergence_2015} proved that the cyclic Kaczmarz method converges linearly with a factor of $1 - c/\kappa^2(D^{-1/2}A)$, where $c$ is a mild logarithmic dependence on $m$.

%\subsection{Motivation}
%Nearly singular linear systems are ubiquitous in theory and applications. In CT reconstruction, the performance of  ART, which is based on the Kaczmarz method, is highly dependent on the condition of the system matrix. Studies have shown that when CT projection data suffers from sparse sampling, geometric inaccuracies, or physical effects (such as beam hardening), the system can exhibit ill-conditioned or singular properties, leading to issues in the Kaczmarz method such as slow convergence, solution instability, or oscillatory behavior \cite{beister_iterative_2012}.
%Key examples include $H(\text{grad})$, $H(\text{div})$, and $H(\text{curl})$ 
%finite element discretizations \cite{2002Finite}, and stable discretizations of the nearly incompressible linear elasticity problems \cite{2000Robust,schoberl_multigrid_1999,YI2006A}. 

\subsection{Motivation}
Although Kaczmarz methods are effective for large-scale noisy or inconsistent systems \cite{zouzias_randomized_2013} and there are also parallel and sparse variants \cite{moorman2021randomized,schopfer_linear_2019,Yuan2021,Yuan2021a}, the rate degenerates for ill-conditioned problems. Such ill-condition systems arise in practical applications like CT reconstruction \cite{beister_iterative_2012} and finite element discretizations for $H(\text{grad})$, $H(\text{div})$, and $H(\text{curl})$ systems \cite{2002Finite} and  stable discretizations of the nearly incompressible elasticity problems \cite{YI2006A}. The theoretical rates depend on the scaled condition number $\kappa(A)$ which can be huge and slow down the convergence, especially for {\it nearly singular systems} having small nonzero singular values; see also \cite{steinerberger_randomized_2021}.  

Instead of \cref{Ax=b}, throughout, let us consider the following linear system
\begin{equation}\label{eq:Ax=b}
	A(\epsilon)x = b,
\end{equation}
where $A(\epsilon) \in \mathbb{R}^{m \times n}$ depends on a small parameter $\epsilon > 0$, and $b\in\ran{A(\epsilon)}$ ensures the consistency. We are interested in the limit case $\epsilon \to 0+$, which makes $A(\epsilon)$ ill-conditioned as $\sigma_{\min}^+(A(\epsilon))\to 0$. Look at a simple example
\begin{equation}\label{eq:Ae1}
	A(\epsilon) =
	\begin{bmatrix}
		1 & -1 \\
		1+\epsilon & -1+\epsilon
	\end{bmatrix}.
\end{equation}
The two rows are almost identical and $\lambda_{\min}^+(A^\top(\epsilon)A(\epsilon))=2+\epsilon^2-\sqrt{\epsilon^4+4}=O(\epsilon^2)$.
As shown in \cref{table-comparison}, for this instance, decreasing $\epsilon$ increases $\kappa(A(\epsilon))$ dramatically, and the Kaczmarz method \cref{eq:kacz-intro} becomes inefficient as the two hyperplanes with respect to the rows of $A(\epsilon)$ approach to each other. The heavy zigzag behavior of Kaczmarz can be observed from \cref{simple_zigzag}. This leads to our main motivation: \textit{How to preserve the efficiency of Kaczmarz methods for nearly singular linear systems?}  
\renewcommand{\arraystretch}{1.2}
\begin{table}[H]
	\centering
	\caption{The condition number \(\kappa(A(\epsilon))\) and iterations of Kaczmarz  \cref{eq:kacz-intro} and 		kernel-augmented Kaczmarz \cref{eq:kak} under the stopping criterion: \(\|A(\epsilon)x-b\|/\|b\| < 10^{-7}\).}
	\label{table-comparison}
	\begin{tabular}{@{}ccccc@{}}
		\toprule
		\(\epsilon\) & \(1/5\) & \(1/5^{2}\) & \(1/5^{3}\) & \(1/5^{4}\) \\
		\midrule
		\(\kappa(A(\epsilon))\) & 10 & 50 & 250 & $1.25\times 10^3$ \\
		%		\midrule
		\hline
		Kaczmarz& $4.1\times 10^2$ & $1.0\times 10^4$ & $2.5\times 10^5$ & $6.3\times 10^6$ \\
		%		\midrule
		\hline
		KaK & 16 & 16 & 16 & 16 \\
		\bottomrule
	\end{tabular}
\end{table}
%\kk{这个表格下里的，Kacz取的是1，Kak取的是理论的临界值,没有乘0.9}

\subsection{Main contributions}
So far, robust Kaczmarz methods for general non-square nearly singular systems have not been considered before in the literature. In this work, we extend the classical Kaczmarz \cref{eq:kacz-intro} to the nearly singular problem \cref{eq:Ax=b} and establish the uniformly rate of convergence for small $\epsilon>0$.  More precisely, our main contributions are summarized as below.
\begin{itemize}
	\item We introduce the concept of {\it near singularity} for matrices with unstable row spaces and provide detailed characterizations via the canonical angles in the Grassman manifold. As a generalization of the standard kernel space, we also define a related important set called the {\it approximate kernel}, which is crucial for developing our robust Kaczmarz methods.
	\item Based on the subspace correction presentation \cite{Griebel2012,oswald_convergence_2015} of Kaczmarz, we propose an variant called the kernel-augmented Kaczmarz (KaK), which adopts a stable row space decomposition by using the approximate kernel and possesses a robust linear rate $O((1-\rho)^k)$, where $\rho\in(0,1)$ is uniformly independent on the small parameter $\epsilon$. We refer to	\cref{table-comparison} for numerical evidence on the simple example \cref{eq:Ae1}. 
	\item Using the relation between Kaczmarz and CD,  we derive an equivalent and implementable version  of KaK called the kernel-augmented CD (KaCD). Also, we combine KaCD with a predictor-corrector scheme  \cite[Eq.(83)]{Luo2022} yielding the kernel-augmented accelerated coordinate descent (KaACD). The uniform faster rate $O(\min\left\{1/k^2,\,(1-\sqrt{\rho})^k\right\})$ shall be proved theoretically and verified numerically.
\end{itemize} 
To the best of our knowledge, KaK is the first robust Kaczmarz with a stable rate of convergence for nearly singular systems, and KaACD is the first accelerated coordinate descent with robust linear rate, compared with existing acceleration works \cite{lee_efficient_2013,2015An}.
\subsection{Outline}
The remainder of the paper is organized as follows. In \cref{sec2} we prepare some preliminaries including basic notations, the Grassman manifold and the subspace correction framework. In \cref{sec:nsp-appker} we introduce the near singularity and  the approximate kernel. Then in \cref{sec:ssc-kacz} we give our robust Kaczmarz
method and establish the proof of convergence rate. The equivalent KaCD and its acceleration are presented in \cref{sec:rcd-acc}. Numerical experiments are provided in \cref{sec:experiments} and finally, \cref{conclu} gives some concluding remarks.

\section{Preliminaries}
\label{sec2}
In this section, we prepare some preliminaries including basic notations, the Grassman manifold and the subspace correction framework.
\subsection{Notation}
Let $V$ be a finite dimensional Hilbert space with the inner product $(\cdot,\cdot)$ and the induced norm $\nm{\cdot} = \sqrt{(\cdot,\cdot)}$. Denote by $\mathcal L(V)$ the set of all linear mappings from $V$ to $V$. The identity mapping $I:V\to V$ is $Iv=v$ for all $v\in V$. If $W\subset V$ is a subspace, then define the inclusion operator $\iota_{W\to V}:W\to V$ by the restriction of the identity operator on $W$, i.e., $\iota_{W\to V} = I_W$.

Given any $A\in\mathcal L(V)$, the operator norm is
\[
\nm{A}_{V\to V}: = \sup_{\nm{v}=1}\nm{Av},
\]
and the adjoint operator $A':V\to V$ is defined by
\begin{equation}\label{eq:adj}
	\inner{A'u,v}=	\inner{u,Av}
	\quad\forall\,u,\,v\in V.
\end{equation}
When $A$ is symmetric and positive definite (SPD), i.e., $A=A'$ and $\dual{Av,v}>0$ for all $v\in V\backslash\{0\}$, we also define the $A$-inner product 
\begin{equation}\label{eq:A-inner}
	\inner{u,v}_A:=\inner{Au,v}\quad\forall\,u,v\in V,
\end{equation}
and the induced $A$-norm $\nm{\cdot}_A:=\sqrt{\inner{\cdot,\cdot}_A}$. For any $B\in\mathcal L(V)$, its $A$-norm is defined by
\[
\nm{B}_A :=\sup_{\nm{v}_A=1}\nm{Bv}_A,
\] 
and we denote by $B^t:V\to V$ the adjoint operator of $B$ with respect to (w.r.t.) the $A$-inner product:
\begin{equation}\label{eq:adj-A}
	\innerA{B^tu,v}=	\innerA{u,Bv}
	\quad\forall\,u,\,v\in V.
\end{equation} 
Clearly, for any $B\in\mathcal L(V)$, we have $(BA)^t=B'A$. Particularly, if $B=B'$ is symmetric, then $(BA)^t = BA$, which means $BA$ is symmetric w.r.t the $A$-inner product.

As usual, let $\nm{\cdot}$ be the standard 2-norm of vectors/matrices and $\dual{\cdot,\cdot}$ the Euclidean inner product between vectors. For a linear subspace \( S\subset \R^n \), \(\dim(S)\) refers to its dimension. For any \( C \in \mathbb{R}^{m \times n} \), denote by \( C^\top/C^+ \) the transpose/ Moore-Penrose pseudoinverse of \( C \) and let $\rank C,\,	 \myker{C}$ and $ \ran{C}$ be the rank, the kernel space and the range (or column space) of $C$, respectively. The range of $C^\top$ is called the row space of \( C \) and usually denoted as \row{$C$}. A fundamental fact is that $\row{C} = \row{C^\top C}$ for all $C\in\R^{m\times n}$, which implies $\dim(\row{C})=\rank(CC^\top)$. For a symmetric matrix $C\in\R^{n\times n}$, let \(\lambda_{\min}(C)\) and \(\lambda_{\max}(C)\) be the smallest and largest eigenvalues of \( C \). If \( C \) is symmetric positive semidefinite, then denote by \(\lambda_{\min}^+(C)\) the smallest nonzero eigenvalue of \( C \), namely,
\begin{equation}\label{eq:lambda-min+}
	\lambda_{\min}^+(C):=\min_{v\in\ran{C}\backslash\{0\}} \dual{Cv,v}/\nm{v}^2.
\end{equation}
\subsection{The Grassman manifold}
Let $n,p\in\mathbb N_+$ be such that $p\leq n$. The Grassman manifold $\gras{p,n}$ consists of all $p$-dimensional subspaces of $\R^n$. For any nonzero vectors $u,w\in\R^n\backslash\{0\}$, the angle between $u$ and $w$ is
\[
\angle(u,w):=\arccos\frac{\dual{u,w}}{\|u\|\|w\|}\in[0,\pi].
\]
As an generalization, for any $u\in\R^n\backslash\{0\}$ and $W\in\gras{p,n}$, the angle between $u$ and $W$ is defined by 
\begin{equation}\label{eq:angle-u-W}
	\angle(u,W): =\min_{w\in W\backslash\{0\}}\arccos\frac{\snm{\dual{u,w}}}{\|u\|\|w\|}\in[0,\pi/2].
	%	\left[0,\frac{\pi}{2}\right],
\end{equation}
Let $P_W:\R^n\to W$ be the orthogonal projection, then $\angle(u,W) = \arccos \|P_Wu\|/\|u\|$. 
\begin{Def}[Canonical angles]\label{def:c-angle}
	For any $  U, W\in\gras{p,n}$, assume the columns of $\mathcal U\in\R^{n\times p}$ and $\mathcal W\in\R^{n\times p}$ form orthonormal bases of $U$ and $W$. Let  $0\leq \sigma_1\leq\cdots\leq\sigma_p $ be the singular values of $\mathcal U^\top\mathcal W$. For $1\leq i\leq p$, define $\theta_i:=\arccos\sigma_i\in[0,\pi/2]$. We call $0\leq\theta_p\leq\cdots\leq\theta_1\leq\pi/2$ the canonical angles between $U$ and $W$. The largest canonical angle $\theta_1$ satisfies
	\[
	\cos\theta_1 = \min_{u\in U\backslash\{0\}}\max_{w\in W\backslash\{0\}} \frac{\snm{\dual{u,w}}}{\nm{u}\nm{w}}.
	\]
\end{Def}
\begin{rem}
	For given $  U, W\in\gras{p,n}$, the canonical angles $\{\theta_i\}_{i=1}^p$ is independent on the choices of the orthonormal bases of $U$ and $W$. In particular, we can take $\mathcal U=\left[u_1,\cdots,u_p\right]$ and $\mathcal W=\left[w_1,\cdots,w_p\right]$ such that $\dual{u_i,u_j}=\dual{w_i,w_j}=\delta_{ij}$ and  $\dual{u_i,w_j}=\delta_{ij}\cos\theta_i$ for all $1\leq i,\,j\leq p$.
\end{rem}
\begin{Def}[Gap metric on $\gras{p,n}$]\label{def:gap}
	For any $  U, W\in\gras{p,n}$, define the gap metric by that $\Delta(U,W):=\nm{P_U-P_W}$, where $P_U:\R^n\to U$ and $P_W:\R^n\to W$ are orthogonal projection operators.
\end{Def}

It is well-known that the Grassman manifold $\gras{p,n}$ equipped with the gap metric $\Delta(\cdot,\cdot)$ yields a complete metric space. For any $  U, W\in\gras{p,n}$,  according to \cite[Theorem 4.5 and Corollary 4.6]{Stewart1990}, we have 
\begin{equation}\label{eq:sin-id}
	\Delta(U,W) = \sin\theta_1,
\end{equation} 
where $\theta_1\in[0,\pi/2]$ denotes the largest canonical angle between $U$ and $W$. In particular, we have $\angle(u,\row{v^\top}) = \angle(\row{u^\top}, \row{v^\top})$ for any $u,\,v\in\R^n\backslash\{0\}$.
\begin{lem}\label{lem:lim-Gpn}
	Let $A:[0,\delta]\to\R^{m\times n}$ be a matrix-valued function that is continuous in component-wise. If $\row{A(\epsilon)}\in\gras{r,n}$ for all $\epsilon\in[0,\delta]$, then
	\[
	\lim\limits_{\epsilon\to0+}\Delta( \row{A(\epsilon)},\row{A(0)})=0.
	\]
\end{lem}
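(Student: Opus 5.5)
We plan to express the orthogonal projection onto the row space through the Moore--Penrose pseudoinverse and then use the known continuity of the pseudoinverse under a constant-rank condition. Write $A_\epsilon := A(\epsilon)$. Since $\row{A_\epsilon} = \ran{A_\epsilon^\top}$, the orthogonal projection onto it is
\[
P_\epsilon := P_{\row{A_\epsilon}} = A_\epsilon^+ A_\epsilon .
\]
By \cref{def:gap}, the claim is equivalent to showing $\nm{A_\epsilon^+A_\epsilon - A_0^+A_0}\to 0$ as $\epsilon\to0+$.

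\textbf{Constant rank.} The hypothesis $\row{A(\epsilon)}\in\gras{r,n}$ for every $\epsilon\in[0,\delta]$ says exactly that $\rank A_\epsilon = r$ is constant.

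\textbf{Reduction to an SPSD matrix.} Let $M_\epsilon := A_\epsilon^\top A_\epsilon$. This matrix is symmetric positive semidefinite, depends continuously on $\epsilon$, has $\row{M_\epsilon} = \row{A_\epsilon}$, and has $\rank M_\epsilon = r$. It therefore suffices to show that the spectral projection of $M_\epsilon$ onto its nonzero eigenvalues depends continuously on $\epsilon$ at $0$.

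\textbf{Spectral gap.} Let $\lambda_1(\epsilon)\ge\cdots\ge\lambda_n(\epsilon)$ be the eigenvalues of $M_\epsilon$. These are continuous in $\epsilon$ by Weyl's inequality $|\lambda_i(\epsilon)-\lambda_i(0)|\le\nm{M_\epsilon-M_0}$. The constant rank gives $\lambda_{r+1}(\epsilon)=\cdots=\lambda_n(\epsilon)=0$ for all $\epsilon$, while $\lambda_r(0)=:2\eta>0$. Hence for all sufficiently small $\epsilon$ we have $\lambda_r(\epsilon)\ge\eta$, so the nonzero part of the spectrum stays uniformly separated from $0$.

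\textbf{Projection estimate.} Two routes are available.

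\emph{Riesz route.} Let $\Gamma$ be a fixed contour enclosing $[\eta,\, \sup_\epsilon\lambda_1(\epsilon)+1]$ and excluding $0$. Then
\[
P_\epsilon = \frac{1}{2\pi i}\oint_\Gamma (z-M_\epsilon)^{-1}\,dz .
\]
The resolvents are uniformly bounded on $\Gamma$ and converge uniformly on $\Gamma$ as $M_\epsilon\to M_0$, so $P_\epsilon\to P_0$.

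\emph{Direct route.} For a unit vector $v\in\row{A_0}^\perp = \myker{M_0}$, decompose $v = P_\epsilon v + (I-P_\epsilon)v$. Since $M_\epsilon$ annihilates $(I-P_\epsilon)v$,
\[
\eta\,\nm{P_\epsilon v}^2 \le \dual{M_\epsilon v,v} = \dual{(M_\epsilon-M_0)v,v}\le \nm{M_\epsilon-M_0}.
\]
This gives $\nm{P_\epsilon (I-P_0)}\le (\nm{M_\epsilon-M_0}/\eta)^{1/2}$. Both subspaces have the same dimension $r$, so $\Delta(U,W) = \nm{P_U(I-P_W)}$ (equal to $\sin\theta_1$ by \cref{eq:sin-id}) yields the desired convergence.

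\textbf{Main obstacle.} The only delicate step is the uniform spectral gap $\lambda_r(\epsilon)\ge\eta$. This is exactly where constant rank is essential: without it, eigenvalues could bifurcate from zero and the row space would jump, so the projections would not converge. Everything else follows from the continuity of $\epsilon\mapsto M_\epsilon$ and Weyl's inequality.
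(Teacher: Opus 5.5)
Your proof is correct, but it takes a different route from the paper. The paper writes $P_{A(\epsilon)} = A^\top(\epsilon)\bigl(A(\epsilon)A^\top(\epsilon)\bigr)^\dagger A(\epsilon)$ and cites the theorem that the pseudoinverse is continuous along a continuous constant-rank family (Stewart--Sun, Thm.~3.9), applied to $A(\epsilon)A^\top(\epsilon)$. Continuity of the projection, and hence of the gap, then follows at once.

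You work instead with $M_\epsilon=A^\top(\epsilon)A(\epsilon)$, whose range is already $\row{A(\epsilon)}$, and you prove the needed continuity yourself. Weyl's inequality together with constant rank gives a uniform gap $\lambda_r(\epsilon)\ge\eta$ between the zero and nonzero spectrum. Either the Riesz formula or your direct estimate then finishes the argument. The identity $\nm{P_U-P_W}=\nm{P_U(I-P_W)}$ for equal dimensions is standard. You could also avoid it: bound $\nm{P_0(I-P_\epsilon)}$ the same way using the gap of $M_0$, and use $P_\epsilon-P_0=P_\epsilon(I-P_0)-(I-P_\epsilon)P_0$.

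In effect you reprove the cited pseudoinverse-continuity result in the symmetric positive semidefinite case. This makes the argument self-contained and shows exactly where constant rank enters. It also gives a quantitative modulus, $\Delta\bigl(\row{A(\epsilon)},\row{A(0)}\bigr)\le(\nm{M_\epsilon-M_0}/\eta)^{1/2}$. You can sharpen this to the linear bound $\nm{M_\epsilon-M_0}/\eta$ by replacing the quadratic form with the chain $\eta\nm{P_\epsilon v}\le\nm{M_\epsilon P_\epsilon v}=\nm{(M_\epsilon-M_0)v}$. If $A(\cdot)$ were Lipschitz, that bound would turn the qualitative limits for $\theta_1(\epsilon)$ used later in the paper into explicit rates. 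The paper's version is shorter but purely qualitative and depends on the external theorem.
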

\begin{proof}
	The orthogonal projection operator $P_{{A}(\epsilon )}:\R^n\to \row{A(\epsilon)}$ is given by
	\[
	P_{{A}(\epsilon )}=A^\top(\epsilon )\bigl(A(\epsilon )A^\top(\epsilon )\bigr)^\dagger A(\epsilon ),
	\]
	where $(\cdot)^\dagger$ denotes the Moore--Penrose pseudo‑inverse. Since $A(\epsilon)$ is continuous in component-wise and $\row{A(\epsilon)}\in\gras{r,n}$ for all $\epsilon\in[0,\delta]$, we claim that $A(\epsilon )A^\top(\epsilon )$ is also continuous and $\rank{A(\epsilon )A^\top(\epsilon )} =\dim \row{A(\epsilon)}= r$ for all $\epsilon\in[0,\delta]$. Thanks to \cite[Theorem 3.9]{Stewart1990}, $\bigl(A(\epsilon )A^\top(\epsilon )\bigr)^\dagger$  is continuous and so is $P_{{A}(\epsilon )}$. In particular, by 	\cref{def:gap}, we have
	\[
	\lim\limits_{\epsilon\to0+}\Delta( \row{A(\epsilon)},\row{A(0)})=
	\lim_{\epsilon \to0+}\|P_{{A}(\epsilon )}-P_{{A}(0)}\|=0.
	\]
	This completes the proof of this lemma.
\end{proof} 
\begin{lem}\label{lem:angle}
	Let $u,v\in\R^n\backslash\{0\}$ and $W\in\gras{p,n}$ be such that $\angle(u,W)\leq \beta$ and $\angle(v,W)\geq\theta$, with $0\leq\beta\leq \theta\leq\pi/2$. Then $\cos\angle(u,v)\leq \cos(\theta-\beta)$.
\end{lem}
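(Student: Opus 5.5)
The plan is to reduce the statement to the triangle inequality for angles between unit vectors, using the vector $w:=P_Wu$ as an intermediate point. First I dispose of a degenerate case. If $\beta=\pi/2$, then $\theta=\pi/2$ as well, so $\cos(\theta-\beta)=1\geq\cos\angle(u,v)$ and there is nothing to prove. Hence assume $\beta<\pi/2$. Then $\nm{P_Wu}=\nm{u}\cos\angle(u,W)>0$, so $w=P_Wu\in W\backslash\{0\}$. Moreover $\dual{u,w}=\nm{P_Wu}^2\geq0$, so
\[
\angle(u,w)=\arccos\frac{\nm{P_Wu}}{\nm{u}}=\angle(u,W)\leq\beta .
\]

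Second, I bound $\angle(v,w)$ from below. Since $\arccos$ is decreasing and $\dual{v,w}\leq\snm{\dual{v,w}}$, we get
\[
\angle(v,w)=\arccos\frac{\dual{v,w}}{\nm{v}\nm{w}}\geq\arccos\frac{\snm{\dual{v,w}}}{\nm{v}\nm{w}}\geq\angle(v,W)\geq\theta .
\]
The last inequality holds because $\angle(v,W)$ is the minimum over $W\backslash\{0\}$ in \cref{eq:angle-u-W}, and $w$ is one admissible element.

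Third, I invoke the spherical triangle inequality $\angle(v,w)\leq\angle(v,u)+\angle(u,w)$. Combined with the two steps above, it gives $\angle(u,v)\geq\theta-\beta$. Since $\theta-\beta\in[0,\pi/2]$, $\angle(u,v)\in[0,\pi]$, and $\cos$ is decreasing on $[0,\pi]$, this yields $\cos\angle(u,v)\leq\cos(\theta-\beta)$.

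The only step needing an argument is the triangle inequality, and I will prove it directly. Take unit vectors $x,y,z$ and set $a=\angle(x,y)$, $b=\angle(y,z)$. If $a+b\geq\pi$, the inequality $\angle(x,z)\leq a+b$ is trivial. Otherwise, decompose $x=\cos a\,y+\sin a\,p$ and $z=\cos b\,y+\sin b\,q$, where $p,q$ are unit vectors orthogonal to $y$; if $\sin a=0$ or $\sin b=0$, any such unit vector may be chosen. Then
\[
\dual{x,z}=\cos a\cos b+\sin a\sin b\dual{p,q}\geq\cos a\cos b-\sin a\sin b=\cos(a+b).
\]
Since $a+b\in[0,\pi)$, this means $\angle(x,z)\leq a+b$. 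I apply this with $x=v/\nm{v}$, $y=u/\nm{u}$ and $z=w/\nm{w}$.
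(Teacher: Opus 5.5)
Your proof is correct, but it takes a different route from the paper's. The paper decomposes both vectors orthogonally with respect to $W$, $u=P_Wu+u_0$ and $v=P_Wv+v_0$. It then applies Cauchy--Schwarz separately to the $W$-components and to the $W^\perp$-components, which gives in one chain $\cos\angle(u,v)\le\cos\angle(u,W)\cos\angle(v,W)+\sin\angle(u,W)\sin\angle(v,W)=\cos(\angle(v,W)-\angle(u,W))$. It finishes with $\angle(v,W)-\angle(u,W)\ge\theta-\beta\ge0$.

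You instead route through the single point $w=P_Wu\in W$ and reduce everything to the spherical triangle inequality. Your proof of that inequality is the same cosine-addition computation, but the decomposition is taken relative to the line through $u$ rather than relative to the subspace $W$.

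What your approach buys is conceptual. It shows the lemma is exactly the statement $\angle(v,W)\le\angle(v,u)+\angle(u,W)$, i.e.\ the angular distance to $W$ is $1$-Lipschitz with respect to the angle metric. The price is that you must treat the case $P_Wu=0$ (only possible when $\beta=\pi/2$) separately and prove the triangle inequality yourself. The paper's computation is shorter and uniform in all cases: Cauchy--Schwarz on the two orthogonal components never needs the projections to be nonzero.
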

\begin{proof}
	With out loss of generality, assume $u$ and $v$ are unit vectors. Then $\cos\angle(u,W)=\nm{P_Wu}$ and $\cos\angle(v,W)=\nm{P_Wv}$, and we have the orthogonal decomposition: $u=P_Wu+u_0$ and $v=P_Wv+v_0$, where both $u_0=u-P_Wu$ and $v_0=v-P_Wv$ are orthogonal to $W$. Observing $\sin\angle(u,W) = \nm{u_0}$ and $\sin\angle(v,W) = \nm{v_0}$, a direct calculation gives 
	\[
	\begin{aligned}
		{}&		\cos\angle(u,v) = {\dual{u,v}}=  {\dual{P_Wu,P_Wv}+\dual{u_0,v_0}}\\
		={}&\nm{P_Wu}\nm{P_Wv}\cos\angle(P_Wu,P_Wv)+\nm{u_0}\nm{v_0}\cos\angle(u_0,v_0)\\		
		={}&\cos\angle(u,W)\cos\angle(v,W)\cos\angle(P_Wu,P_Wv)+
		\sin\angle(u,W)\sin\angle(v,W)\cos\angle(u_0,v_0)\\
		\leq {}&\cos\angle(u,W)\cos\angle(v,W)+
		\sin\angle(u,W)\sin\angle(v,W) \\
		={}&\cos(\angle(u,W)-\angle(v,W))=\cos(\angle(v,W)-\angle(u,W)).
	\end{aligned}
	\]
	Since $-\angle(u,W)\geq-\beta$ and $\angle(v,W)\geq\theta$, we obtain $\angle(v,W)-\angle(u,W)\geq \theta-\beta\geq0$. It follows immediately that $\cos\angle(u,v)\leq \cos(\theta-\beta)$. This completes the proof.
\end{proof}
\subsection{Successive subspace correction}
\label{sec:ssc}
Let $V$ be a finite dimensional Hilbert space with the inner product $(\cdot,\cdot)$ and the induced norm $\nm{\cdot} = \sqrt{(\cdot,\cdot)}$.  Given an SPD linear operator $A:V\to V$ and $f\in V$, we aim to seek $u\in V$ such that 				
\begin{equation}\label{eq:Auf-msc}
	Au = f.
\end{equation}
Let us restate briefly the basic framework of the  {\it successive subspace correction} (SSC) for solving \cref{eq:Auf-msc}; see \cite{1992Iterative,xu-2001,xu_method_2002} for a comprehensive presentation. Such a framework is crucial for us to analyze the Kaczmarz iteration and develop robust variants for solving nearly singular system.
\subsubsection{Algorithm presentation}
Consider a finite sequence of finite dimensional {\it auxiliary spaces} $\{V_j\}_{j=1}^{J}$. Each $V_j$ is not necessarily a subspace of $V$ but is related to $V$ via a linear operator $R_j:V_j\to V$ that satisfies the following decomposition assumption.
\begin{assum}\label{assum:V-Vj}
	It holds that 
	$ V = \sum_{j=1}^{J}R_jV_j$.
	%	\begin{equation}\label{eq:Vj-V}
		%		V = \sum_{j=1}^{J}R_jV_j.
		%	\end{equation} 
\end{assum}

Thanks to \cref{assum:V-Vj}, for any $v\in V$, we have $		v = \sum_{j=1}^{J}R_jv_j$ with $v_j\in V_j,\,1\leq  j\leq  J$.
Here we do not require such a decomposition is a direct sum, i.e., the representation of $v$ has not to be unique. 

For each auxiliary space, we also propose the following assumption.
\begin{assum}\label{assum:aj-Vj} 		 
	For $1\leq j\leq J$, $V_j$ is a Hilbert space with an inner product $a_j(\cdot,\cdot)$.   
\end{assum}

Let $R_j^*:V\to V_j$ be the adjoint operator of $R_j$, namely,
\[
a_j\inner{R^*_j v,v_j} = \inner{v,R_j v_j}\quad \forall\,v_j\in V_j,\,v\in V.
\] 
Define the linear operator $T_j:V\to V_j$ by that
\[
a_j\inner{T_jv,v_j} = \innerA{v,R_jv_j}
\quad \forall\,v_j\in V_j,\,v\in V.
\]
Clearly, $T_j$ is the adjoint operator of $R_j$ w.r.t. the $A$-inner product and $T_j=R_j^*A$. Note that $R_jR_j^*:V\to V$ is symmetric and thus $R_jT_j=R_jR_j^*A$ is symmetric w.r.t. the $A$-inner product.
%The adjoint operator of $R_j$ w.r.t. the $A$-inner product is given by 
%\[
%a_j\inner{R^{\top}_j v,v_j} = \innerA{v,R_j v_j}\quad \forall\,v_j\in V_j,\,v\in V.
%\]
%By definition, we have $T_j=R_j^*A$.

The method of SSC with relaxation reads as follows (cf.\cite[Algorithm 3.5]{1992Iterative}). Given $u_k\in V$, set $v_{k,1} = u_k$ and update $u_{k+1}=v_{k,J+1}$ by
\begin{equation}\label{eq:vkj}
	v_{k,j+1} = v_{k,j} +\omega R_jR_j^*(f-Av_{k,j})\quad\text{for} \,j=1,\cdots,J,
\end{equation}
where $\omega>0$ denotes the relaxation parameter.
It can be reformulated as a multiplicative Schwarz iteration
\begin{equation}\label{eq:iter-Bssc}
	u_{k+1} = u_k + B_{\rm ssc}(\omega)(f-Au_k),
\end{equation}
with the iterator $B_{\rm ssc}(\omega):V\to V$ satisfies $	I-B_{\rm ssc}(\omega)A =(I-\omega R_JT_J)\cdots(I-\omega R_1T_1)$. 
%In Line \ref{algo:rj} of \cref{algo:SSC}, the residual correction is actually given by $r_j=R_j^*(f-Av)$. Therefore, the update in Line \ref{algo:v} is equivalent to $v=v+R_jR_j^*(f-Av)$.
%%\begin{equation}\label{eq:I-BA-SSC}
%%	I-B_{\rm ssc}(\omega)A =(I-R_JT_J)\cdots(I-R_1T_1).
%%\end{equation}
%%%%%%%%%%%%%%%%%%%%Rj*%%%%%%
%%\LH{A direct calculation gives $(AR_j)^*=R_j^*A$. } 
%%\LH{It is clear that $R_j^{\top} =(AR_j)^*= R^*_jA$.}
%%%%%%%%%%%%%%%%%%%%Rj*%%%%%%
%\begin{algorithm}[H]
%	\caption{SSC}
%	\label{algo:SSC}
%	\begin{algorithmic}[1] 
	%		\STATE Given $u_0\in V$.
	%		\FOR{$k = 0,1,\ldots,$}
	%		\STATE Set $v = u_k$.
	%		\FOR{$j = 1,2,\ldots,J$}
	%		\STATE Find $r_j\in V_j$ such that $
	%		a_j(r_j,w) = \inner{f-Av,R_jw}\quad\forall\,w\in V_j$. \label{algo:rj}
	%		\STATE Update $v= v + R_jr_j$. \label{algo:v} % $=v+R_jR_j^*(f-Av)$.
	%		\ENDFOR
	%		\STATE Update $u_{k+1} = v$.
	%		\ENDFOR
	%	\end{algorithmic}
%\end{algorithm} 
\subsubsection{Symmetrization}
\label{sec:sym-ssc}
Note that $B_{\rm ssc}(\omega)$ is possibly not symmetric and the symmetrized variant of \cref{eq:iter-Bssc} is useful for both theoretical analysis and practical performance. Specifically, based on \cref{eq:iter-Bssc}, consider 
\[
\left\{
\begin{aligned}
	{}&	u_{k+1/2} = u_k + B_{\rm ssc}(\omega)(f-Au_k),\\
	{}& 	u_{k+1} =u_{k+1/2} + B'_{\rm ssc}(\omega)(f-Au_{k+1/2}),
\end{aligned}
\right.
\]
where $B'_{\rm ssc}(\omega):V\to V$ denotes the adjoint operator of $B_{\rm ssc}(\omega)$; see \cref{eq:adj}. This yields the symmetrized SSC \cite[Algorithm 3.4]{1992Iterative}. Given $u_k\in V$, update $u_{k+1}\in V$ by the following two steps:
\begin{itemize}
	\item {\bf Step 1} Set $v_{k,1} = u_k$ and update $u_{k+1/2}=v_{k,J+1}$ by
	\[
	\begin{aligned}
		v_{k,j+1} = v_{k,j} +\omega R_jR_j^*(f-Av_{k,j})\quad\text{for} \,j=1,\cdots,J.
	\end{aligned}
	\]
	\item {\bf Step 2} Set $v_{k,J+1} = u_{k+1/2}$ and update $u_{k+1} = v_{k,1}$ by that
	\[
	v_{k,j} = v_{k,j+1} +\omega R_jR_j^*(f-Av_{k,j+1})\quad\text{for} \,j=J,\cdots,1.
	\]
\end{itemize}
Actually, this is also equivalent to a multiplicative Schwarz iteration
\begin{equation}\label{eq:iter-bar-Bssc}
	u_{k+1} = u_k + \bar B_{\rm ssc}(\omega)(f-Au_k),
\end{equation}
where $\bar B_{\rm ssc}(\omega) = B'_{\rm ssc}(\omega)+B_{\rm ssc}(\omega)-B'_{\rm ssc}(\omega)AB_{\rm ssc}(\omega)$ is called the symmetrization of $B_{\rm ssc}(\omega)$. Similarly as before, a direct computation gives 
\begin{equation}\label{eq:I-barBA-I-BA-SSC}
	I-\bar B_{\rm ssc}(\omega)A = (I-B'_{\rm ssc}(\omega)A)(I-B_{\rm ssc}(\omega)A)
	=(I-B_{\rm ssc}(\omega)A)^t(I-B_{\rm ssc}(\omega)A),
\end{equation}
which is symmetric w.r.t. the $A$-inner product. 
%
%version of \cref{algo:SSC}; see \cref{algo:Sym-SSC} below. 
%\begin{algorithm}[H]
%	\caption{Symmetrized SSC}
%	\label{algo:Sym-SSC}
%	\begin{algorithmic}[1] 
	%		\STATE Given $u_0\in V$.
	%		\FOR{$k = 0,1,\ldots,$}
	%		\STATE Set $v = u_k$.
	%		\FOR{$j = 1,2,\ldots,J$}
	%%		\STATE Find $r_j\in V_j$ such that $
	%%		a_j(r_j,w) = \inner{f-Av,R_jw}\quad\forall\,w\in V_j$.
	%		\STATE Update $v =v+R_jR_j^*(f-Av)$.
	%		\ENDFOR
	%		\FOR{$j = J,J-1,\ldots,1$}
	%%		\STATE Find $r_j\in V_j$ such that $
	%%		a_j(r_j,w) = \inner{f-Av,R_jw}\quad\forall\,w\in V_j$.
	%		\STATE Update $v=  v+R_jR_j^*(f-Av)$.		 
	%		\ENDFOR		
	%		\STATE Update $u_{k+1} = v$.
	%		\ENDFOR
	%	\end{algorithmic}
%\end{algorithm}

\subsubsection{Convergence theory}

Based on \cref{assum:V-Vj}, we introduce an auxiliary space of
product type $\bm V := V_1\times  V_2\times \cdots\times V_J$ and the corresponding inner product $\inner{\bm u,\bm v}_{\bm V} := \sum_{j=1}^{J}a_j\inner{u_j,v_j}$ for all $\bm u,\,\bm v\in\bm V$ with $\bm{u}_j = u_j$ and $\bm v_j = v_j$. The induced norm is $\nm{\cdot}_{\bm V}:=\sqrt{\inner{\cdot,\cdot}_{\bm V}}$. Define the linear operator $\mathcal R:\bm V\to V$ by that
\[
\mathcal R\bm u = \sum_{j=1}^{J}R_j\bm u_j=\sum_{j=1}^{J}R_ju_j.
\]
We claim that $\mathcal{R}:\bm V\to V$ is onto, i.e., $\mathcal{R}$ is surjective. Denote by $\mathcal{R}^\star:V\to \bm V$ the adjoint operator of $\mathcal R$:
\[
\inner{\mathcal R^\star u,\bm v}_{\bm V}=\inner{u,\mathcal R\bm v}.
\] 
We also define the adjoint operator of $\mathcal R$ w.r.t. the $A$-inner product:
\begin{equation}\label{eq:RT}
	\inner{\mathcal R^{T} u,\bm v}_{\bm V}=\innerA{u,\mathcal R\bm v}.
\end{equation}
It is clear that $\mathcal R^{T} =(A\mathcal R)^\star= \mathcal R^\star A$. 
%\LH{Shall we use $()^T$ for operators?}

Let $\bm A = \mathcal{R}^\star A\mathcal{R}$ be the expanded operator. If we write each $\bm v\in\bm V$ as a ``column vector", then in a consistent block form, we have $\mathcal R=(R_1,\cdots,R_J)$ and 
\[
\bm A = \begin{bmatrix}
	T_1 R_1&	T_1 R_2&\cdots&	T_1 R_J\\
	T_2 R_1&	T_2 R_2&\cdots&		T_2 R_J\\
	\vdots&	\vdots&\ddots&	\vdots\\
	T_J R_1&	T_J R_2&\cdots&		T_J R_J
\end{bmatrix},\quad \mathcal{R}^\star = \begin{bmatrix}
	R_1^*\\R_2^*\\\vdots\\R_J^*
\end{bmatrix},\quad  \bm v = \begin{bmatrix}
	\bm v_1\\\bm v_2\\\vdots\\\bm v_J
\end{bmatrix}.
\]
Now consider the block splitting $		\bm A = \bm D +\bm L +\bm U$, where $\bm D =\diag{\bm A}$ denotes the block diagonal part and $\bm U=\triu{\bm A}$ is the strict upper block triangular part. It is not hard to verify that $\bm L'=\bm U$ and $\bm D'=\bm D$.

Under the following assumption, which is equivalent to that $2\bm I-\omega\bm D$ is SPD, we can establish the convergence results of \cref{eq:iter-Bssc,eq:iter-bar-Bssc}. It is known as the famous Xu--Zikatanov identity \cite{xu_method_2002} and provides a powerful tool for the analysis of the Kaczmarz iteration and its variants. 
%For a complete proof of \cref{thm:xz-id}, we refer to \cref{sec:app-xz-id}.
\begin{assum}\label{assum:TjRj} 
	$2\bm I-\omega\bm D$ is SPD. Or equivalently, $\omega\in\left(0,2/\nm{\bm D}_{\bm V\to \bm V}\right)$.
\end{assum} 
\begin{thm}[\cite{xu_method_2002}]
	\label{thm:xz-id}
	Under Assumptions \ref{assum:V-Vj}, \ref{assum:aj-Vj} and \ref{assum:TjRj}, both \cref{eq:iter-Bssc,eq:iter-bar-Bssc} are convergent and 					 
	\begin{equation}\label{eq:xz-id}
		\tag{XZ-Identity}
		\nm{I-\bar{B}_{\rm ssc}(\omega)A}_A=	\nm{I-B_{\rm ssc}(\omega)A}_A^2 =  1-\frac{1}{c_0(\omega)},
	\end{equation} 
	where the finite positive constant $c_0(\omega)$ is defined by
	\begin{align}
		\label{eq:c0}
		c_0(\omega): ={} \frac{1}{\omega}\sup_{\nm{v}_A=1} \inf_{\mathcal{R}\bm v =v}\inner{	\left(2\bm I-\omega\bm D\right)^{-1}	\left(\bm I+\omega\bm U\right)\bm v,	\left(\bm I+\omega\bm U\right)\bm v}_{\bm V}
	\end{align} 
\end{thm}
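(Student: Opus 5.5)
We follow the standard route to the Xu--Zikatanov identity: lift the error propagation operator to the product space $\bm V$, where the successive correction becomes a block Gauss--Seidel sweep, and then characterize the energy norm through a minimization over decompositions.

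\textbf{Step 1: the lifted form of the iterator.} First we show that
\[
B_{\rm ssc}(\omega)=\mathcal R\,(\omega^{-1}\bm I+\bm L)^{-1}\mathcal R^\star .
\]
Write $v_{k,j}$ for the intermediate iterates in \cref{eq:vkj}. The correction in the $j$-th substep is $R_j\bm w_j$ with $\bm w_j=\omega R_j^*(f-Av_{k,j})$. Expanding $v_{k,j}=u_k+\sum_{i<j}R_i\bm w_i$ gives
\[
\omega^{-1}\bm w_j+\sum_{i<j}T_jR_i\bm w_i=R_j^*(f-Au_k),
\]
that is, $(\omega^{-1}\bm I+\bm L)\bm w=\mathcal R^\star(f-Au_k)$. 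This system is block lower triangular with invertible diagonal, so it is solvable, and $u_{k+1}-u_k=\mathcal R\bm w$ yields the formula. Set $\bm M:=\omega^{-1}\bm I+\bm L$, so that $\bm M'=\omega^{-1}\bm I+\bm U$. The symmetrized operator is then
\[
\bar B_{\rm ssc}(\omega)=\mathcal R\,\bar{\bm M}^{-1}\mathcal R^\star,\qquad
\bar{\bm M}^{-1}=\bm M'^{-1}(\bm M+\bm M'-\bm A)\bm M^{-1}.
\]
Here $\bm M+\bm M'-\bm A=2\omega^{-1}\bm I-\bm D$, which is SPD by \cref{assum:TjRj}. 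Consequently $\bar{\bm M}=\bm M\,(2\omega^{-1}\bm I-\bm D)^{-1}\bm M'$ is SPD.

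\textbf{Step 2: the norm identity.} By \cref{eq:I-barBA-I-BA-SSC},
\[
\nm{I-\bar B_{\rm ssc}A}_A=\nm{(I-B_{\rm ssc}A)^t(I-B_{\rm ssc}A)}_A=\nm{I-B_{\rm ssc}A}_A^2 .
\]
Since $\bar B_{\rm ssc}$ is SPD, $\bar B_{\rm ssc}A$ is $A$-symmetric with spectrum in $(0,1]$. Its eigenvalues are at most $1$ because $I-\bar B_{\rm ssc}A=E^tE\ge0$ with $E=I-B_{\rm ssc}A$. Its eigenvalues are positive because $\bar B_{\rm ssc}$ is SPD, which in turn uses the surjectivity of $\mathcal R$ (\cref{assum:V-Vj}) together with the SPD property of $\bar{\bm M}^{-1}$. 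Hence
\[
\nm{I-\bar B_{\rm ssc}A}_A=1-\lambda_{\min}(\bar B_{\rm ssc}A)=1-\Bigl(\sup_{v}\frac{(\bar B_{\rm ssc}^{-1}v,v)}{\nm{v}_A^2}\Bigr)^{-1}.
\]

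\textbf{Step 3: the key lemma.} This is the main obstacle. We need the fictitious-space identity: for SPD $\bar{\bm M}$ and surjective $\mathcal R$,
\[
(\bar B_{\rm ssc}^{-1}v,v)=\inf_{\mathcal R\bm v=v}(\bar{\bm M}\bm v,\bm v)_{\bm V}.
\]
We prove it by Lagrange multipliers. The minimizer is $\bm v^*=\bar{\bm M}^{-1}\mathcal R^\star\lambda$ with $\lambda=(\mathcal R\bar{\bm M}^{-1}\mathcal R^\star)^{-1}v$. For any other decomposition $\bm v=\bm v^*+\bm z$ with $\mathcal R\bm z=0$, we get $(\bar{\bm M}\bm v^*,\bm z)_{\bm V}=(\lambda,\mathcal R\bm z)=0$, which gives the Pythagorean inequality. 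The minimum value is $(\lambda,v)=(\bar B_{\rm ssc}^{-1}v,v)$.

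Finally, substitute $\bar{\bm M}$ from Step 1:
\[
(\bar{\bm M}\bm v,\bm v)_{\bm V}=\bigl((2\omega^{-1}\bm I-\bm D)^{-1}\bm M'\bm v,\bm M'\bm v\bigr)_{\bm V}.
\]
Since $\bm M'=\omega^{-1}(\bm I+\omega\bm U)$ and $(2\omega^{-1}\bm I-\bm D)^{-1}=\omega(2\bm I-\omega\bm D)^{-1}$, this equals
\[
\omega^{-1}\bigl((2\bm I-\omega\bm D)^{-1}(\bm I+\omega\bm U)\bm v,(\bm I+\omega\bm U)\bm v\bigr)_{\bm V}.
\]
Taking the supremum over $\nm{v}_A=1$ gives exactly $c_0(\omega)$. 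The constant $c_0(\omega)$ is finite because everything is finite dimensional and $\bar B_{\rm ssc}$ is SPD. It is at least $1$, since $\lambda_{\min}(\bar B_{\rm ssc}A)\le 1$, so $1-1/c_0\in[0,1)$. Convergence of both iterations then follows. The delicate points are the invertibility and SPD bookkeeping with the two adjoints (Euclidean versus $A$-adjoint), and making sure that $\bm D$ and $\bm U$ are taken with respect to the $\bm V$ inner product.
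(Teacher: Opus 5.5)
Your proof is correct. The paper does not reprove this theorem; it cites \cite{xu_method_2002}. Your argument is the standard one that its product-space setup is built for: write $B_{\rm ssc}(\omega)=\mathcal R(\omega^{-1}\bm I+\bm L)^{-1}\mathcal R^\star$, symmetrize to get $\bar{\bm M}=\bm M(2\omega^{-1}\bm I-\bm D)^{-1}\bm M'$, and apply the auxiliary space lemma. Your Step 3 is exactly \cref{lem:inv-batB} in the appendix, which the paper proves by the same Lagrange-multiplier argument.
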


As an important byproduct, we have the following corollary.
\begin{coro}\label{coro2.1}
	Under Assumptions \ref{assum:V-Vj}, \ref{assum:aj-Vj} and \ref{assum:TjRj}, for the symmetrized iterator $\bar{B}_{\rm ssc}(\omega)$, we have 
	\begin{equation}\label{eq:scv-barB}
		\frac1{c_0}\dual{ \bar{B}_{\rm ssc}^{-1}(\omega)v,v}\leq 		\dual{Av,v}\leq \dual{ \bar{B}_{\rm ssc}^{-1}(\omega)v,v}\quad\forall\,v\in V,
	\end{equation}
	where $c_0\geq c_0(\omega)$ is arbitrary, with $c_0(\omega)>0$ being given by \cref{eq:c0}.
\end{coro}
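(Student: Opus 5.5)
Write $E:=I-\bar B_{\rm ssc}(\omega)A$ and $\bar B:=\bar B_{\rm ssc}(\omega)$. The plan is to read \cref{eq:scv-barB} off the spectral information that \cref{thm:xz-id} gives for $E$, and then translate from the $A$-inner product to the Euclidean one.

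First, by \cref{eq:I-barBA-I-BA-SSC} we have $E=(I-B_{\rm ssc}(\omega)A)^t(I-B_{\rm ssc}(\omega)A)$. So $E$ is symmetric and positive semidefinite with respect to $\innerA{\cdot,\cdot}$, since $\innerA{Ev,v}=\nm{(I-B_{\rm ssc}(\omega)A)v}_A^2\geq0$. Its $A$-norm equals its largest $A$-eigenvalue, and \cref{eq:xz-id} identifies this as $1-1/c_0(\omega)$. Hence, for every $v\in V$,
\[
0\leq \innerA{(I-\bar BA)v,v}\leq \Bigl(1-\tfrac{1}{c_0(\omega)}\Bigr)\innerA{v,v}.
\]
Rearranging, with $\innerA{\bar BAv,v}=\dual{A\bar BAv,v}$, gives
\[
\tfrac{1}{c_0(\omega)}\dual{Av,v}\leq \dual{A\bar BAv,v}\leq \dual{Av,v}.
\]

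Next I will check that $\bar B$ is symmetric and invertible, so that $\bar B^{-1}$ makes sense. Symmetry is immediate: $\bar B=B'+B-B'AB$ with $B=B_{\rm ssc}(\omega)$ is symmetric in $(\cdot,\cdot)$. For invertibility, the lower bound above shows $A\bar BA$ is positive definite. Since $A$ is SPD, $\bar B=A^{-1}(A\bar BA)A^{-1}$ is then SPD, and hence invertible.

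Finally, I substitute $w=Av$, which ranges over all of $V$ because $A$ is invertible. This gives
\[
\tfrac{1}{c_0(\omega)}\dual{A^{-1}w,w}\leq\dual{\bar Bw,w}\leq\dual{A^{-1}w,w}\quad\forall\,w\in V,
\]
that is, $\frac{1}{c_0(\omega)}A^{-1}\preceq\bar B\preceq A^{-1}$ in the Loewner order. Operator inversion is order-reversing on SPD operators, so $A\preceq \bar B^{-1}\preceq c_0(\omega)A$. The right-hand inequality is equivalent to $\frac{1}{c_0(\omega)}\dual{\bar B^{-1}v,v}\leq\dual{Av,v}$, and the left-hand one is the upper bound in \cref{eq:scv-barB}. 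For an arbitrary $c_0\geq c_0(\omega)$ the lower bound only weakens, since $1/c_0\leq 1/c_0(\omega)$.

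The only step that needs care is the order-reversal of inversion. I will justify it directly rather than cite it: $X\preceq Y$ with $X,Y$ SPD implies $Y^{-1/2}XY^{-1/2}\preceq I$, hence $X^{1/2}Y^{-1}X^{1/2}\preceq I$, hence $Y^{-1}\preceq X^{-1}$. Alternatively, one can state all bounds through the $A$-spectrum of $\bar BA$, which lies in $[1/c_0(\omega),1]$, so that $(\bar BA)^{-1}=A^{-1}\bar B^{-1}$ has $A$-spectrum in $[1,c_0(\omega)]$. Then $\innerA{A^{-1}\bar B^{-1}v,v}=\dual{\bar B^{-1}v,v}$ gives \cref{eq:scv-barB} directly.
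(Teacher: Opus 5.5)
Your proof is correct and complete: $I-\bar B_{\rm ssc}(\omega)A$ is $A$-symmetric positive semidefinite with $A$-norm $1-1/c_0(\omega)$ by the XZ identity, so the $A$-spectrum of $\bar B_{\rm ssc}(\omega)A$ lies in $[1/c_0(\omega),1]$, and inverting gives the two-sided bound (and the weaker bound for any $c_0\geq c_0(\omega)$). The paper gives no argument of its own and only cites Lemma~2.1 of Xu's 1992 paper, which (as far as I recall it) rests on this same spectral reasoning, so your proposal supplies the details the paper omits.
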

\begin{proof}
	We refer to \cite[Lemma 2.1]{1992Iterative} and omit the details.
\end{proof}
%\begin{coro}
%	Moreover, if $B_{\rm ssc}(\omega)=B'_{\rm ssc}$, then 
%	\[
%	\nm{I-B_{\rm ssc}(\omega)A}_A=\max\{\snm{1-	\lambda_{\min}({B}_{\rm ssc}A)},\,\snm{1-	\lambda_{\max}({B}_{\rm ssc}A)}\},
%	\]
%	which implies that 
%	\[
%	\begin{aligned}
	%		\snm{1-	\lambda_{\min}({B}_{\rm ssc}A)}\leq \nm{I-\bar{B}_{\rm ssc}A}_A^{1/2}=\sqrt{1-\lambda_{\min}(\bar{B}_{\rm ssc}A)},\\
	%		\snm{1-	\lambda_{\max}({B}_{\rm ssc}A)}\leq \nm{I-\bar{B}_{\rm ssc}A}_A^{1/2}=\sqrt{1-\lambda_{\min}(\bar{B}_{\rm ssc}A)}.
	%	\end{aligned}
%	\]
%	We obtain 
%	\[
%	\begin{aligned}
	%		\lambda_{\min}({B}_{\rm ssc}A)\geq {}& 1-\sqrt{1-\lambda_{\min}(\bar{B}_{\rm ssc}A)}\geq\frac{\lambda_{\min}(\bar{B}_{\rm ssc}A)}{2},\\
	%		\lambda_{\max}({B}_{\rm ssc}A)\leq {}& 1+\sqrt{1-\lambda_{\min}(\bar{B}_{\rm ssc}A)}\leq2,
	%	\end{aligned}
%	\]
%	This gives
%	\begin{equation}\label{eq:scv-B}
	%		\frac{\lambda_{\min}(\bar{B}_{\rm ssc}A)}{2}\dual{ {B}_{\rm ssc}^{-1}v,v}\leq 		\dual{Av,v}\leq 2\dual{ {B}_{\rm ssc}^{-1}v,v}.
	%	\end{equation}
%\end{coro}

\section{Nearly Singularity Property}
\label{sec:nsp-appker}
In \cite{lee_robust_2007}, Lee et al. considered a special nearly singular case with $A(\epsilon)=\epsilon I+A_0$, where $A_0$ is symmetric positive semidefinite. It is clear that $A(\epsilon)$ is SPD for all $\epsilon>0$ but $\lambda_{\min}(A(\epsilon))\to0$ as $\epsilon\to 0$. In this section, we aim to extend such a nearly singular property to a more general non-square case. 
\subsection{Definition}
As we all know, the Kaczmarz iteration is actually a row action method \cite{karczmarz1937angenaherte} and can also be recast into the SSC framework \cite{oswald_convergence_2015,xu_method_2002} with proper subspace decomposition on the row space of $A(\epsilon)$; see later in \cref{sec:ssc-kacz}. Motivated by this, instead of the degeneracy of the smallest eigenvalue or singular value, we now focus on the asymptotic behavior of $\row{A(\epsilon)}$ as $\epsilon$ approaches to 0. More precisely, assume that $A(\epsilon)$ admits the two-block structure as specified below in \cref{assum:A0-A1}.
%\begin{equation}\label{eq:Ae}
%	A(\epsilon) = \begin{bmatrix}
	%		A_{0}(\epsilon) \\
	%		A_{1}(\epsilon)
	%	\end{bmatrix},
%\end{equation}
%where $A_0(\epsilon)\in\R^{m_0\times n}$ and $A_1(\epsilon)\in\R^{m_1\times n}$ with $m_0+m_1=m$. 
\begin{assum}
	\label{assum:A0-A1}
	Let $\bar{\epsilon}>0$. The matrix-valued function $A:[0,\bar{\epsilon}]\to\R^{m\times n}$ is continuous in component-wise and admits the two-block structure
	\begin{equation}\label{eq:Ae}
		A(\epsilon) = \begin{bmatrix}
			A_{0}(\epsilon) \\
			A_{1}(\epsilon)
		\end{bmatrix} \quad \forall\,0\leq \epsilon\leq\bar{\epsilon},
	\end{equation}
	where $A_0(\epsilon)\in\R^{m_0\times n}$ and $A_1(\epsilon)\in\R^{m_1\times n}$ with $m_0+m_1=m$. Moreover, the diagonal part $D_\epsilon:=\diag{A(\epsilon)A^\top(\epsilon)}:[0,\bar{\epsilon}]\to\R^{m\times m}$ is non-degenerate
	\begin{equation}\label{eq:d0-d1}
		\eta_0I \preceq 
		%		\diag{A(\epsilon)A^\top(\epsilon)}
		D_\epsilon
		\preceq \eta_1I\quad\forall\,0\leq \epsilon\leq\bar{\epsilon},
	\end{equation}
	where $0<\eta_0\leq \eta_1<\infty$ are independent on $\epsilon$.
	%	 Moreover, there exists $M_0>0$ such that $\nm{A(\epsilon)-A(0)}\leq M_0\epsilon$ for all
\end{assum}

From the SSC perspective, the stability of the row space plays a key role. Inspired by this, we introduce  the following definition of {\it nearly singular property} that leads to the unstable subspace decomposition. 
\begin{Def}[Nearly singular property]\label{def:nsp}
	Let $A:[0,\bar{\epsilon}]\to\R^{m\times n}$ be a matrix-valued function satisfying \cref{assum:A0-A1}. We say that $A(\epsilon)$ is   nearly singular  at $\epsilon=0$ (or simply nearly singular) if it satisfies:
	\begin{itemize}
		\item[(i)]  $ \dim \row{A_{0}(\epsilon)}<\dim \row{A(\epsilon)}$ for all $0<\epsilon\leq\bar{\epsilon} $;
		\item[(ii)] there exists  $r_0\in\mathbb N_+$ such that   $\dim \row{A_{0}(\epsilon)}=r_0$ for all $0\leq\epsilon\leq\bar{\epsilon}$;		 
		\item[(iii)] there is $\Pi\in\R^{m_1\times m_0}$ such that $A_1(0)=\Pi A_0(0)$. In other words, we have $ \row{A(0)}\subset  \row{A_{0}(0)}$.
	\end{itemize}
\end{Def}

Let us discuss briefly the above definition. The first term says that $\row{A_{0}(\epsilon)}$ is indeed a proper subspace of $\row{A(\epsilon)}$; the second implies that the dimension of  $\row{A_0(\epsilon)}$ leaves invariant for all $\epsilon\in[0,\bar{\epsilon}]$ and is continuous at $\epsilon=0$. Notably, as $\epsilon\to 0+$, $ \row{A_{0}(\epsilon)} $  converges to $   \row{A_{0}(0)}$  under the gap metric. 
\begin{lem}\label{lem:lim-gap-A0}
	Let $A:[0,\bar{\epsilon}]\to\R^{m\times n}$ satisfy \cref{assum:A0-A1}. If $A(\epsilon)$ is nearly singular, then $\row{A_{0}(\epsilon)}\in\gras{r_0,n}$ for all $\epsilon\in[0,\bar{\epsilon}]$ and 
	\[
	\lim\limits_{\epsilon\to0+}\Delta( \row{A_{0}(\epsilon)},\row{A_{0}(0)})=0,
	\]
	where $\Delta(\cdot,\cdot)$ denotes the gap metric (cf.\cref{def:gap}).
\end{lem}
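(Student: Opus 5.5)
This is a direct application of \cref{lem:lim-Gpn} to the top block $A_0(\epsilon)$ in place of $A(\epsilon)$. I would check its two hypotheses in turn and then read off the conclusion.

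\textbf{Step 1: continuity.} By \cref{assum:A0-A1}, the map $\epsilon\mapsto A(\epsilon)$ is component-wise continuous on $[0,\bar{\epsilon}]$. The block $A_0(\epsilon)\in\R^{m_0\times n}$ consists of the first $m_0$ rows of $A(\epsilon)$, i.e. it is a fixed subset of the entries of $A(\epsilon)$. Hence $A_0:[0,\bar{\epsilon}]\to\R^{m_0\times n}$ is also component-wise continuous.

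\textbf{Step 2: constant dimension.} Condition (ii) of \cref{def:nsp} gives $\dim\row{A_0(\epsilon)}=r_0$ for every $\epsilon\in[0,\bar{\epsilon}]$. Since $\row{A_0(\epsilon)}\subset\R^n$ is a subspace, this says exactly that $\row{A_0(\epsilon)}\in\gras{r_0,n}$ for all $\epsilon\in[0,\bar{\epsilon}]$, which is the first assertion.

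\textbf{Step 3: conclusion.} Apply \cref{lem:lim-Gpn} with $\delta=\bar{\epsilon}$, $m$ replaced by $m_0$, $r=r_0$, and the matrix function $A_0$. This gives
\[
\lim_{\epsilon\to0+}\Delta\bigl(\row{A_0(\epsilon)},\row{A_0(0)}\bigr)=0.
\]

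Conditions (i) and (iii) of \cref{def:nsp} are not needed here. The only point requiring care is that the rank must be constant up to and including $\epsilon=0$: this is what makes the pseudoinverse in the proof of \cref{lem:lim-Gpn} continuous, and it is exactly what (ii) provides. So I expect no real obstacle.
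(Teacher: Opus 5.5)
Your proposal is correct and matches the paper's proof, which consists of the single step of applying \cref{lem:lim-Gpn} to $A_0$. You verify the two hypotheses explicitly: continuity is inherited from $A$, and constant rank $r_0$ up to and including $\epsilon=0$ comes from condition (ii). You also correctly take the codomain to be $\R^{m_0\times n}$, where the paper writes $\R^{m\times n}$.
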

\begin{proof}
	Applying \cref{lem:lim-Gpn} to $A_0:[0,\bar{\epsilon}]\to\R^{m\times n}$ concludes the proof.
\end{proof} 

However, for the whole row space $\row{A(\epsilon)}$, the {\it dimension reduction}  occurs at $\epsilon=0$ due to the inclusion relation in the third term in \cref{def:nsp}. If we consider a  decomposition of $\row{A(\epsilon)}$ by the row vectors, then it is {\it unstable} when $\epsilon\to0+$. For instance, let $A(\epsilon)$ be given by \cref{eq:Ae1}, which has $\row{A(\epsilon)} = \R^2$ for all $\epsilon>0$. With $A_0(\epsilon) = [1,-1]$ and $A_1(\epsilon) = [1+\epsilon,-1+\epsilon]$, it is clear that $\dim \row{A_{0}(\epsilon)}=1 $ for all $\epsilon\geq0$. Hence, by \cref{def:nsp}, $A(\epsilon)$ is nearly singular.
\subsection{Approximate kernel}
We then introduce the {\it approximate kernel} of a nearly singular matrix $A(\epsilon)$, which is crucial for us to develop a robust iterative method from the SSC framework. 
\begin{Def}[Approximate kernel]\label{def:ak}
	Let $A:[0,\bar{\epsilon}]\to\R^{m\times n}$ be a matrix-valued function satisfying \cref{assum:A0-A1}. 
	The approximate kernel of $A(\epsilon)$ is defined by	
	\begin{equation}\small
		\label{eq:app-ker}
		\appker{A(\epsilon)}:=\myker{A_0(\epsilon)}\cap \row{A(\epsilon)}\\= \left\lbrace   A^{\top}(\epsilon)y:A_{0}(\epsilon) A^{\top}(\epsilon)y=0,\,y\in\R^m\right\rbrace.
	\end{equation}
	The approximate dual kernel of $A(\epsilon)$ is given by
	\begin{equation}
		\label{eq:app-dual-ker}
		\dappker{A(\epsilon)}:=\myker{A_0(\epsilon)A^\top(\epsilon)}= \left\lbrace   y\in \R^{m}:A_{0}(\epsilon)A^{\top}(\epsilon)y=0\right\rbrace.
	\end{equation}
\end{Def}

%\begin{rem}
%	Prove that 
%	\begin{itemize}
	%		\item $
	%		\lim\limits_{\epsilon\to0}\row{A_{0}(\epsilon)}= \row{A_{0}(0)}$.
	%		\item 	Define the orthogonal complement of  \(\appker{A(\epsilon)}\) in $V=\row{A(\epsilon)}$ by
	%		\[
	%		\appker{A(\epsilon)}^\perp := \{ v_r \in V : \inner{v_r, v_n } = 0 \quad\forall\, v_n\in \appker{A(\epsilon)}\}.
	%		\]
	%		We claim that \( V = \appker{A(\epsilon)} \oplus \appker{A(\epsilon)}^\perp \) and any \( v \in V \) admits a unique decomposition \( v = v_n + v_r \) where \( v_n \in \appker{A(\epsilon)}\), \( v_r \in \appker{A(\epsilon)}^\perp \) and \( \inner{v_r, v_n } = 0 \). Prove that there exists some positive constant $c_0>0$, independent on $\epsilon$, such that
	%		\[\inf_{\|v\|=1}v^\top A_0(0)^\top  A_0(0)v=\inf_{\|v\|=1}v_r^\top A_0(0)^\top  A_0(0)v_r\geq c_0.\] 
	%		\item $\lambda^+_{\min}( A^\top(\epsilon) A(\epsilon) )\leq C(\epsilon)$, where as $\epsilon\to0$, we have $C(\epsilon)\to0$. 
	%	\end{itemize}
%\end{rem}

We claim that the approximate kernel $\appker{A(\epsilon)}$ is actually the orthogonal complement of $\row{A_0(\epsilon)}$ in $\row{A(\epsilon)}$; see the proof of \cref{lem:lambda-min-A0}. In addition, $\appker{A(\epsilon)}$ is the image of the approximate dual kernel $	\dappker{A(\epsilon)}$ under the mapping $A^\top(\epsilon):\R^m\to\R^n$. It follows that \( \dim \appker{A(\epsilon)} \leq \dim 		\dappker{A(\epsilon)}\). When \( A(\epsilon) \) has full row rank, 
the equality case \( \dim \appker{A(\epsilon)} = \dim 		\dappker{A(\epsilon)}\) holds.

Note that our \cref{def:nsp} does not involve explicitly the degenerate behavior of the singular values of $A(\epsilon)$. In the following, we establish an upper bound of $\lambda^+_{\min}(A^\top(\epsilon) A(\epsilon))$, which goes to zero as $\epsilon\to0+$. 
\begin{lem}\label{lem:up-bd-lambda-min+}
	Let $A:[0,\bar{\epsilon}]\to\R^{m\times n}$ satisfy \cref{assum:A0-A1} and $a^\top_i(\epsilon)$ the $i$-th row of $A_1(\epsilon)$ for $1\leq i\leq m_1$. Let $\theta_1(\epsilon)\in[0,\pi/2]$ denote the largest canonical angle between $\row{A_0(\epsilon)}$ and $\row{A_0(0)}$, and define $\beta(\epsilon): = \max_{1\leq i\leq m_1}\beta_i(\epsilon)$ with $\beta_i(\epsilon)=\angle(a_i(\epsilon),\row{A_1(0)})\in[0,\pi/2]$. If $A(\epsilon)$ is nearly singular, then   
	\begin{equation}
		\label{eq:bound-angle}
		\theta_1(\epsilon)+\beta(\epsilon)\to0\quad\text{as}\,\,\epsilon\to0+,
	\end{equation}
	which implies there exists some $\epsilon_0\in(0,\bar{\epsilon}]$ such that 
	\begin{equation}\label{eq:bd-theta-beta}
		0\leq 		\theta_1(\epsilon)+\beta(\epsilon)\leq \pi/2\quad\forall\,\epsilon\in[0,\epsilon_0].
	\end{equation}
	In addition, we have the estimate
	\begin{equation}\label{eq:lmin+est}
		\lambda_{\min}^+(A^\top(\epsilon )A(\epsilon ))\leq	m_1\eta_1\sin^2\left(\theta_1(\epsilon)+\beta(\epsilon)\right)\quad\forall\,\epsilon\in[0,\epsilon_0].
	\end{equation}
	%Moreover, when $\epsilon\to0+$, we have $\theta_1(\epsilon)+\beta(\epsilon)\to0$.
\end{lem}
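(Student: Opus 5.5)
The plan is to handle the limit \cref{eq:bound-angle} first and then obtain \cref{eq:lmin+est} by testing the Rayleigh quotient \cref{eq:lambda-min+} with a well-chosen vector.

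\textbf{The two angles tend to zero.} For $\theta_1(\epsilon)$, \cref{lem:lim-gap-A0} gives $\row{A_0(\epsilon)}\in\gras{r_0,n}$ and $\Delta(\row{A_0(\epsilon)},\row{A_0(0)})\to0$. By \cref{eq:sin-id} this gap equals $\sin\theta_1(\epsilon)$, so $\theta_1(\epsilon)\to0$.

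For $\beta_i(\epsilon)$, note that $a_i(0)\in\row{A_1(0)}$. Also $\nm{a_i(0)}^2\geq\eta_0>0$ by \cref{eq:d0-d1}, so $a_i(0)\neq 0$. Hence
\[
\cos\beta_i(\epsilon)=\nm{P_{\row{A_1(0)}}a_i(\epsilon)}/\nm{a_i(\epsilon)}
\]
is continuous at $\epsilon=0$ with value $1$, so $\beta_i(\epsilon)\to0$. Taking the maximum over finitely many $i$ gives $\beta(\epsilon)\to 0$, which proves \cref{eq:bound-angle}. The existence of $\epsilon_0$ in \cref{eq:bd-theta-beta} follows immediately.

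\textbf{The eigenvalue estimate.} Fix $\epsilon\in(0,\epsilon_0]$. By \cref{def:nsp}(i), $\row{A_0(\epsilon)}$ is a proper subspace of $\row{A(\epsilon)}$. So we can pick a unit vector $v\in\row{A(\epsilon)}=\ran{A^\top(\epsilon)A(\epsilon)}$ orthogonal to $\row{A_0(\epsilon)}$. (At $\epsilon=0$ the bound is the trivial case and can be checked separately, or follows by the same argument whenever such a $v$ exists.) Then $A_0(\epsilon)v=0$, so
\[
\lambda_{\min}^+(A^\top A)\leq\nm{A(\epsilon)v}^2=\sum_{i=1}^{m_1}\dual{a_i,v}^2\leq\sum_{i}\nm{a_i}^2\cos^2\angle(a_i,\pm v)\leq m_1\eta_1\max_i\cos^2\angle(a_i,\pm v),
\]
using $\nm{a_i(\epsilon)}^2\leq\eta_1$ from \cref{eq:d0-d1}. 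It therefore remains to show $|\dual{a_i,v}|/\nm{a_i}\leq\sin(\theta_1+\beta)$.

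To do so, apply \cref{lem:angle} with $W=\row{A_0(0)}$, $u=a_i(\epsilon)$ and $v$ (replacing $v$ by $-v$ if needed so the cosine is nonnegative).
\begin{itemize}
\item[(a)] Since $\row{A_1(0)}\subset W$ by \cref{def:nsp}(iii), projecting onto the larger space cannot decrease the norm. Hence $\angle(a_i,W)\leq\beta_i\leq\beta$.
\item[(b)] Since $v\perp\row{A_0(\epsilon)}$, we have $P_Wv=(P_W-P_{\row{A_0(\epsilon)}})v$. Thus $\nm{P_Wv}\leq\Delta=\sin\theta_1$, i.e. $\angle(v,W)\geq\pi/2-\theta_1$.
\end{itemize}
The hypothesis $\beta\leq\pi/2-\theta_1$ of \cref{lem:angle} is exactly \cref{eq:bd-theta-beta}. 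The lemma then yields
\[
\cos\angle(a_i,v)\leq\cos(\pi/2-\theta_1-\beta)=\sin(\theta_1+\beta),
\]
which concludes \cref{eq:lmin+est}.

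The main obstacle is step (b): converting orthogonality to the \emph{perturbed} space $\row{A_0(\epsilon)}$ into a lower bound on the angle to the \emph{limit} space $\row{A_0(0)}$. The identity $P_Wv=(P_W-P_{A_0(\epsilon)})v$ together with \cref{eq:sin-id} is what should make this work. The other care points are the nonvanishing of $a_i(0)$ and the sign/absolute-value convention in the angle definition.
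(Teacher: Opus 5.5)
Your argument follows the paper's proof. You use the same test vector $v\in\row{A(\epsilon)}$ orthogonal to $\row{A_0(\epsilon)}$ (that is, $v\in\appker{A(\epsilon)}$), the same two angle bounds relative to $W=\row{A_0(0)}$, and the same appeal to \cref{lem:angle}. A few of your steps are slightly tidier than the paper's:
\begin{itemize}
\item The projector identity $P_Wv=(P_W-P_{\row{A_0(\epsilon)}})v$ in step (b) replaces the paper's decomposition of every unit $u\in W$.
\item The direct continuity argument for $\cos\beta_i(\epsilon)$ avoids the paper's detour through $\angle(a_i(\epsilon),\row{a_i^\top(0)})$ and \cref{lem:lim-Gpn}.
\item Replacing $v$ by $-v$ correctly handles a sign point the paper passes over, since \cref{lem:angle} bounds $\cos\angle(u,v)$, not its absolute value.
\end{itemize}

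You should drop the parenthetical about $\epsilon=0$. The bound is not trivial there and cannot be checked separately, because it is false. We have $\theta_1(0)=0$ and $\beta(0)=0$, since each $a_i(0)$ lies in $\row{A_1(0)}$. So the right-hand side of \cref{eq:lmin+est} vanishes at $\epsilon=0$. But $\lambda_{\min}^+(A^\top(0)A(0))>0$, because $A(0)\neq0$ by \cref{eq:d0-d1}; for \cref{eq:Ae1} it equals $4$.

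Consistently with this, no admissible $v$ exists at $\epsilon=0$: Definition~\ref{def:nsp}(iii) forces $\row{A(0)}=\row{A_0(0)}$. The estimate therefore holds only on $(0,\epsilon_0]$. That is also all the paper's proof establishes, since it relies on $\dim\appker{A(\epsilon)}>0$, which is true only for $\epsilon\in(0,\bar{\epsilon}]$. The closed interval in the statement is an overstatement, and your write-up should restrict to $\epsilon>0$ rather than claim the endpoint.
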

\begin{proof}
	Let us firstly verify the statement \cref{eq:bound-angle}. From \cref{eq:sin-id}, we claim that  $\theta_1(\epsilon)=\arcsin\Delta(\row{A_0(\epsilon )},\,\row{A_0(0)})$ is continuous and it follows from \cref{lem:lim-gap-A0} that $\theta_1(\epsilon)\to0$ as $\epsilon\to0+$. For $1\leq i\leq m_1$, $a_i:[0,\bar{\epsilon}]\to\R^n$ is continuous and $\row{a^\top_i(\epsilon)}\in\gras{1,n}$ for all $0\leq \epsilon\leq \bar{\epsilon}$. Then we find that
	\[
	\begin{aligned}
		\beta_i(\epsilon)={}&\angle(a_i(\epsilon),\row{A_1(0)})\leq \angle(a_i(\epsilon),\row{a_i^\top(0)})\\
		={}& \angle(\row{a_i(\epsilon)},\row{a_i^\top(0)})
		=\arcsin\Delta(\row{a^\top_i(\epsilon)},\row{a^\top_i(0)}).
	\end{aligned}
	\]
	Thus, by \cref{lem:lim-Gpn}, we have $\beta_i(\epsilon)\to0$ as $\epsilon\to0+$, which yields that $\beta(\epsilon)\to0$ as $\epsilon\to0+$ and concludes \cref{eq:bound-angle}. It is clear that $\beta(\epsilon)$ is continuous and there exists some $\epsilon_0\in(0,\bar{\epsilon}]$ such that $\theta_1(\epsilon)+\beta(\epsilon)\in[0,\pi/2]$ for all $\epsilon\in[0,\epsilon_0]$. This verifies \cref{eq:bd-theta-beta}.
	
	In what follows, let us prove the estimate \cref{eq:lmin+est}. It is evident that (cf.\cref{eq:lambda-min+})
	\[
	\lambda_{\min}^+(A^\top(\epsilon )A(\epsilon ))=\min_{ v\in\row{A(\epsilon )}\backslash\{0\}} \nm{A(\epsilon )v}^2/\nm{v}^2.
	\]
	Here, we used the fact $\ran{A^\top(\epsilon )A(\epsilon )}=\ran{A^\top(\epsilon )}=\row{A(\epsilon )}$. Since $A(\epsilon)$ is nearly singular, by \cref{def:nsp}, we must have $\dim{\appker{A(\epsilon )}}=\dim{\row{A(\epsilon )}}-r_0>0$ for all $\epsilon\in(0,\bar{\epsilon}]$. Therefore, for any  unit  vector $v(\epsilon )\in\appker{A(\epsilon )}$, it follows 
	\begin{equation}\label{eq:key-est}
		\begin{aligned}
			{}&	\lambda_{\min}^+(A^\top(\epsilon )A(\epsilon ))\leq\nm{A(\epsilon )v(\epsilon)}^2=\|A_1(\epsilon )v(\epsilon )\|^2	=\sum_{i=1}^{m_1}|\dual{a_i(\epsilon ),v(\epsilon )}|^2\\
			\leq {}&\max_{1\leq i\leq m_1}\cos^2\angle(v(\epsilon),a_i(\epsilon)) \cdot\sum_{i=1}^{m_1}\|a_i(\epsilon )\|^2
			\leq  m_1\eta_1\max_{1\leq i\leq m_1}\cos^2\angle(v(\epsilon),a_i(\epsilon)),
		\end{aligned}
	\end{equation}
	where in the last line, we used the assumption \cref{eq:d0-d1}, which promises $\nm{a_i(\epsilon)}^2\leq \eta_1$ for all $1\leq i\leq m_1$.
	It is sufficient to find an upper bound estimate of $\cos\angle(v(\epsilon),a_i(\epsilon))$, which is equivalent to get a lower bound of  $\angle(v(\epsilon),a_i(\epsilon))$ for all $1\leq i\leq m_1$.
	
	For that, we aim to find a lower bound of $\angle(v(\epsilon),\row{A_0(0)})$ and an upper bound of $\angle(a_i(\epsilon),\row{A_0(0)})$. 
	According \cref{def:nsp}, we have $\row{A_1(0)}\subset\row{A_0(0)}$, which indicates that
	\begin{equation}\label{eq:up-bd-angle-ai}
		\angle(a_i(\epsilon),\row{A_0(0)})\leq \angle(a_i(\epsilon),\row{A_1(0)})=\beta_i(\epsilon)\leq \beta(\epsilon).
	\end{equation}
	Then, consider any  unit  vector $u\in\row{A_0(0)}$, which admits the unique decomposition $u=u_1+u_2$ with $u_1\in\row{A_0(\epsilon )}$ and $u_2\in \myker{A_0(\epsilon)}$. It is easy to find that $\nm{u_1}=\cos\angle(u,\row{A_0(\epsilon)}) $ and $\|u_2\| = \sin\angle(u,\row{A_0(\epsilon)})\le\sin\theta_1(\epsilon)$, since $\theta_1(\epsilon)$ is the largest canonical angle between $\row{A_0(\epsilon)}$ and $\row{A_0(0)}$. Observing $v(\epsilon )$ is orthogonal to $u_1$, we also obtain $	|\dual{v(\epsilon ),u}|=|\dual{v(\epsilon ),u_2}|
	\le\|v(\epsilon )\|\,\|u_2\|=\nm{u_2}\le\sin\theta_1(\epsilon)$ for any unit vector $u\in\row{A_0(0)}$.  Thus, we conclude that  
	\begin{equation}\label{eq:lower-bd}
		\angle(v(\epsilon ),\row{A_0(0)})\geq\pi/2-\theta_1(\epsilon).
	\end{equation}
	
	Finally, thanks to \cref{eq:bd-theta-beta,eq:up-bd-angle-ai,eq:lower-bd}, invoking \cref{lem:angle} yields 
	\[
	\cos\angle(v(\epsilon),a_i(\epsilon))\leq\cos(\pi/2-\theta_1(\epsilon)-\beta(\epsilon))=\sin(\theta_1(\epsilon)+\beta(\epsilon )),
	\]
	for all $1\leq i\leq m_1$. This together with \cref{eq:key-est} gives \cref{eq:lmin+est} and completes the proof.
\end{proof}
%%%这里有一个很重要的注解，解释remark2.1的，被我删掉了。%%%%%%%%%%%%
%%%%%%%%%%%%%%%%%%
%%%%%%	To elaborate, let \( \bar{W}_{\epsilon} = \operatorname{span}\{\bar{w}_1, \ldots, \bar{w}_M\} \) with \( \dim(\bar{W}_{\epsilon}) = M \). Then \( W_{\epsilon} = A^{\top}\bar{W}_{\epsilon} = \operatorname{span}\{A^{\top}\bar{w}_1, \ldots, A^{\top}\bar{w}_M\} \). Consider a linear combination \( \sum_{j=1}^M c_j A^{\top}\bar{w}_j = 0 \). Since \( A^{\top} \) has full column rank, this implies \( \sum_{j=1}^M c_j \bar{w}_j = 0 \), and hence \( c_j = 0 \) for all \( j \). Thus, \( \{A^{\top}\bar{w}_j\} \) forms a linearly independent set, ensuring \( \dim(W_{\epsilon}) = M \) when \( A \) has full row rank.
%Then a set of basis vectors for $W_\epsilon$ can be obtained by multiplying the transpose of $A$ with a set of basis vectors for $\bar W_\epsilon$. The  simple examples are provided below:\\

As a comparison, we prove that $\lambda^+_{\min}(A^\top_0(\epsilon) A_0(\epsilon))$ is bounded below by a small perturbation of $\lambda^+_{\min}(A^\top_0(0) A_0(0))$, which is independent of $\epsilon$. This is crucial for proving the robust convergence rates of our proposed methods.
\begin{lem}\label{lem:lambda-min-A0}
	Let $A:[0,\bar{\epsilon}]\to\R^{m\times n}$ be a matrix-valued function satisfying \cref{assum:A0-A1}. If $A(\epsilon)$ is nearly singular, then we have the orthogonal direct sum $\row{A(\epsilon)}=\appker{A(\epsilon)}\oplus\row{A_0(\epsilon)}$, and there exists $\epsilon_1\in(0,\bar{\epsilon}]$ such that
	\begin{equation}\label{eq:sigma0}
		\sigma_0(\epsilon):= \lambda_{\min}^+(A^\top_0(0)A_0(0))\cos^2\theta_1(\epsilon)-\nm{A_0(0)-A_0(\epsilon)}^2>0\quad\forall\,\epsilon\in[0,\epsilon_1],
	\end{equation}
	where $\theta_1(\epsilon)\in[0,\pi/2]$ denotes the largest canonical angle between $\row{A_0(\epsilon)}$ and $\row{A_0(0)}$. Moreover,  we have $	\lambda_{\min}^+(A^\top_0(\epsilon)A_0(\epsilon))\geq \sigma_0(\epsilon)>0$ for all $\epsilon\in[0,\epsilon_1]$ and 
	\begin{equation}\label{eq:lim-sigma0}		\lim\limits_{\epsilon\to0+}\sigma_0(\epsilon) = \lambda_{\min}^+(A^\top_0(0)A_0(0)).
	\end{equation}
\end{lem}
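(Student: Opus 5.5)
The plan has three parts, of which the first two are routine; the third is where I expect the argument to break. I first describe the plan and then explain why the last step cannot be carried out as worded.

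\textbf{Orthogonal direct sum.} Since $A_0(\epsilon)$ consists of rows of $A(\epsilon)$, we have $\row{A_0(\epsilon)}\subset\row{A(\epsilon)}$. The orthogonal complement of $\row{A_0(\epsilon)}$ in $\R^n$ is $\myker{A_0(\epsilon)}$. Intersecting with $\row{A(\epsilon)}$, the complement of $\row{A_0(\epsilon)}$ inside $\row{A(\epsilon)}$ is $\myker{A_0(\epsilon)}\cap\row{A(\epsilon)}=\appker{A(\epsilon)}$ by \cref{eq:app-ker}. This gives $\row{A(\epsilon)}=\appker{A(\epsilon)}\oplus\row{A_0(\epsilon)}$.

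\textbf{Positivity and the limit.} By \cref{lem:lim-gap-A0} and \cref{eq:sin-id}, $\theta_1(\epsilon)\to0$. By continuity in \cref{assum:A0-A1}, $\nm{A_0(0)-A_0(\epsilon)}\to0$. Hence $\sigma_0(\epsilon)\to\lambda_{\min}^+(A_0^\top(0)A_0(0))>0$, which is \cref{eq:lim-sigma0}. Positivity of $\sigma_0$ on some interval $[0,\epsilon_1]$ then follows by continuity.

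\textbf{The lower bound: intended route.} Take a unit vector $v\in\row{A_0(\epsilon)}$. Decompose $v=v_1+v_2$ with $v_1\in\row{A_0(0)}$ and $v_2\in\myker{A_0(0)}$. Then $\nm{v_1}\ge\cos\theta_1(\epsilon)$, so
\[
\nm{A_0(0)v}^2=\nm{A_0(0)v_1}^2\ge\lambda_{\min}^+(A_0^\top(0)A_0(0))\cos^2\theta_1(\epsilon).
\]
Writing $A_0(\epsilon)v=A_0(0)v-(A_0(0)-A_0(\epsilon))v$, one would then want $\nm{A_0(\epsilon)v}^2\ge\nm{A_0(0)v}^2-\nm{A_0(0)-A_0(\epsilon)}^2$. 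The rank is constant ($=r_0$) by \cref{def:nsp}(ii), and $\lambda^+_{\min}(A_0^\top(\epsilon)A_0(\epsilon))$ is the minimum of $\nm{A_0(\epsilon)v}^2$ over such $v$, so this would give the claim.

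\textbf{Main obstacle.} The difference-of-squares step is the crux, and I do not think it can be proved as stated. The triangle inequality only gives $\nm{A_0(\epsilon)v}\ge a-e$, where $a=\nm{A_0(0)v}$ and $e=\nm{A_0(0)-A_0(\epsilon)}$. Moreover
\[
(a-e)^2=a^2-e^2-2e(a-e),
\]
which is strictly smaller than $a^2-e^2$ whenever $0<e<a$. So the triangle inequality yields something weaker than the claim, not the claim itself.

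In fact the inequality $\lambda_{\min}^+(A_0^\top(\epsilon)A_0(\epsilon))\ge\sigma_0(\epsilon)$ fails in general. Take $m_0=n=1$, $A_0(0)=1$, and $A_0(\epsilon)=1-\epsilon$. Then $\theta_1\equiv0$ and the rank is constant. Yet
\[
\lambda_{\min}^+(A_0^\top(\epsilon)A_0(\epsilon))=(1-\epsilon)^2<1-\epsilon^2=\sigma_0(\epsilon)\quad\text{for all }\epsilon\in(0,1).
\]
Weyl's inequality for singular values gives the same weaker estimate $(\sigma_{r_0}(A_0(0))-e)^2$ and nothing stronger.

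\textbf{What I would do.} I would prove every part of the statement except this inequality: the orthogonal direct sum, $\sigma_0(\epsilon)>0$ on $[0,\epsilon_1]$, and \cref{eq:lim-sigma0}. I would replace the lower bound by
\[
\lambda_{\min}^+(A_0^\top(\epsilon)A_0(\epsilon))\ge\Bigl(\sqrt{\lambda_{\min}^+(A_0^\top(0)A_0(0))}\cos\theta_1(\epsilon)-\nm{A_0(0)-A_0(\epsilon)}\Bigr)^2,
\]
which is valid wherever the bracket is nonnegative. This bound is weaker than $\sigma_0(\epsilon)$, but it still has a positive limit $\lambda_{\min}^+(A_0^\top(0)A_0(0))$ as $\epsilon\to0+$. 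That is all the later robustness arguments need: a lower bound independent of $\epsilon$ for $\epsilon$ small.
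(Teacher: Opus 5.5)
Your diagnosis is right, and it locates a genuine error in the paper rather than a gap in your plan.

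\textbf{Where the paper's proof fails.} The paper uses the same decomposition $v=v_n+v_r$ relative to $\row{A_0(0)}$ and the same bound $\nm{v_r}\ge\cos\theta_1(\epsilon)\nm{v}$. It then handles the perturbation through the valid estimate $\nm{A_0(\epsilon)v}^2\ge\frac12\nm{A_0(0)v_r}^2-\nm{(A_0(0)-A_0(\epsilon))v}^2$. In the next line it bounds $\frac12\nm{A_0(0)v_r}^2$ below by $\lambda_{\min}^+(A_0^\top(0)A_0(0))\nm{v_r}^2$, silently losing the factor $\frac12$. So $\lambda_{\min}^+(A_0^\top(\epsilon)A_0(\epsilon))\ge\sigma_0(\epsilon)$ is never actually proved, and it is false.

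\textbf{A repair to your counterexample.} With $n=1$ one cannot have $\dim\row{A_0(\epsilon)}<\dim\row{A(\epsilon)}$. So Definition~\ref{def:nsp}(i) fails, and the example does not meet the lemma's hypotheses. Pad it to $n=2$: on $[0,\bar\epsilon]=[0,1/2]$ let $A_0(\epsilon)=[1-\epsilon,\ 0]$ and $A_1(\epsilon)=[1,\ \epsilon]$. Then:
\begin{itemize}
\item $D_\epsilon=\mathrm{diag}\bigl((1-\epsilon)^2,\,1+\epsilon^2\bigr)$ lies between $\frac14 I$ and $\frac54 I$;
\item $\det A(\epsilon)=\epsilon(1-\epsilon)\neq0$ for $\epsilon>0$, and $r_0=1$;
\item $A_1(0)=A_0(0)$, so $\Pi=1$;
\item $\row{A_0(\epsilon)}=\myspan\{e_1\}$, so $\theta_1\equiv0$.
\end{itemize}
Yet $\lambda_{\min}^+(A_0^\top(\epsilon)A_0(\epsilon))=(1-\epsilon)^2<1-\epsilon^2=\sigma_0(\epsilon)$ for every $\epsilon\in(0,1/2]$, so no $\epsilon_1$ works.

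\textbf{Comparison of the remaining parts.} These agree with the paper in substance. Your identification of the orthogonal complement of $\row{A_0(\epsilon)}$ inside $\row{A(\epsilon)}$ as $\row{A(\epsilon)}\cap\myker{A_0(\epsilon)}$ is a shorter substitute for the paper's dimension count.

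Your replacement bound is valid and is the better repair. The paper's route with the factor restored only gives $\frac12\lambda_{\min}^+(A_0^\top(0)A_0(0))\cos^2\theta_1(\epsilon)-\nm{A_0(0)-A_0(\epsilon)}^2$. Its limit is half the value in \eqref{eq:lim-sigma0}, whereas your squared bracket tends to the full $\lambda_{\min}^+(A_0^\top(0)A_0(0))$.

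Hence you can redefine $\sigma_0(\epsilon)$ as your squared bracket in \eqref{eq:rho} and in the proof of Theorem~\ref{thm:conv_robust}. The threshold $\sigma_0(\epsilon)\ge\frac12\lambda_{\min}^+(A_0^\top(0)A_0(0))$ used in Remark~\ref{rem:rho0}, and with it \eqref{eq:rhow}, then goes through unchanged.

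As you note, Weyl's inequality yields $\bigl(\sqrt{\lambda_{\min}^+(A_0^\top(0)A_0(0))}-\nm{A_0(0)-A_0(\epsilon)}\bigr)^2$ directly from the constant rank $r_0$. This is even slightly sharper than your bound, since it has no $\cos\theta_1(\epsilon)$ factor. So the canonical angle is not needed for this estimate at all.
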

\begin{proof}
	Denote by $\row{A_0(\epsilon)}^\perp$ the orthogonal complement of $\row{A_0(\epsilon)}$ in $\row{A(\epsilon)}$. Let us first show that $\appker{A(\epsilon )}=\row{A_0(\epsilon)}^\perp$.  
	By the definition \cref{eq:app-ker}, $\appker{A(\epsilon )}=\myker {A_0(\epsilon )}\cap\row{A(\epsilon )}$. Since $\row {A_0(\epsilon )}\subset\row{A(\epsilon )}$ and $\row{A_0(\epsilon )}$ is orthogonal to $\myker{A_0(\epsilon )}$, we have $	\appker{A(\epsilon )} \subset \row {A_0(\epsilon )}^\perp$. It follows that 
	\[
	\begin{aligned}
		\dim \appker{A(\epsilon )} \leq \dim  \row {A_0(\epsilon )}^\perp={}&\dim  \row {A(\epsilon )}-\dim  \row {A_0(\epsilon )}\\={}&\dim  \row {A(\epsilon )}-r_0.
	\end{aligned}
	\]
	On the other hand, observe the formula
	\[
	\begin{aligned}
		{}&		\dim \appker{A(\epsilon )} \\
		={}&\dim  \row {A(\epsilon )}+\dim  \myker{A_0(\epsilon )}-\dim(\row {A(\epsilon )}+\myker{A_0(\epsilon )})\\
		\geq{}&\dim  \row {A(\epsilon )}+n-r_0-n=\dim  \row {A(\epsilon )}-r_0,
	\end{aligned}
	\]
	which yields the relation $			\dim \appker{A(\epsilon )} = \dim  \row {A_0(\epsilon )}^\perp$. Hence, we conclude that $\appker{A(\epsilon )}=\row{A_0(\epsilon)}^\perp$ and the orthogonal decomposition $\row{A(\epsilon)}=\appker{A(\epsilon)}\oplus\row{A_0(\epsilon)}$ holds.
	
	According to \cref{eq:sin-id,lem:lim-gap-A0}, we claim that   $\theta_1(\epsilon)\to0$ as $\epsilon\to0+$, and by \cref{assum:A0-A1}, $\nm{A_0(0)-A_0(\epsilon)}\to0$ as $\epsilon\to0+$. This concludes \cref{eq:sigma0,eq:lim-sigma0}.
	
	It remain to verify $\lambda_{\min}^+(A^\top_0(\epsilon)A_0(\epsilon))\geq \sigma_0(\epsilon)$. For any $v\in\row{A_0(\epsilon)}$, we have the orthogonal decomposition $v=v_n+v_r$, where $v_n\in\myker{A_0(0)}$ and $v_r\in\row{A_0(0)}$. Since $\theta_1(\epsilon)\in[0,\pi/2]$ denotes the largest canonical angle between $\row{A_0(\epsilon)}$ and $\row{A_0(0)}$, we also get $	\nm{v_n} = \nm{v}\sin\angle(v,\row{A_0(0)})\leq\nm{v}\sin\theta_1(\epsilon)$, implying $\nm{v_r}^2=\nm{v}^2-\nm{v_n}^2\geq(1-\sin^2\theta_1(\epsilon))\nm{v}^2=\cos^2\theta_1(\epsilon)\nm{v}^2$. Thus, it follows that
	\[
	\small
	\begin{aligned}
		\nm{A_0(\epsilon)v}^2\geq {}&	\frac{1}{2}\nm{A_0(0)v}^2-\nm{(A_0(0)-A_0(\epsilon))v}^2
		={}\frac{1}{2}\nm{A_0(0)v_r}^2-\nm{(A_0(0)-A_0(\epsilon))v}^2\\
		\geq{}& \lambda_{\min}^+(A^\top_0(0)A_0(0))\nm{v_r}^2-\nm{A_0(0)-A_0(\epsilon)}^2\nm{v}^2
		\geq{}\sigma_0(\epsilon)\nm{v}^2.
	\end{aligned}
	\] 
	This leads to $\lambda_{\min}^+(A^\top_0(\epsilon)A_0(\epsilon))\geq \sigma_0(\epsilon)$ and completes the proof.
\end{proof}

To the end of this section, we provide three examples for further illustrations. 
\begin{eg}\label{eg:ex1}
	Let $A(\epsilon)$ be given by \cref{eq:Ae1}, which is nearly singular with $A_0(\epsilon)=[1,-1]$ and $A_1(\epsilon)=[1+\epsilon,-1+\epsilon]$. A direct calculation gives $A_{0}(\epsilon)A^{\top}(\epsilon)=[2,2]$, which yields $	\dappker{A(\epsilon)}=\myspan\{[1,-1]^\top\}$ and $\appker{A(\epsilon)}=A^\top(\epsilon)\dappker{A(\epsilon)}=\myspan\{[1,1]^\top\}$. Besides, we have $\lambda_{\min}^+(A^\top_0(\epsilon)A_0(\epsilon))=2$ and $\lambda_{\min}^+(A^\top(\epsilon)A(\epsilon))=2-\sqrt{\epsilon^4+4}+\epsilon^2=O(\epsilon^2)$.
\end{eg} 
\begin{eg}\label{ex:eg3-8}
	Consider a tridiagonal matrix
	\[
	A(\epsilon)=\begin{bmatrix}
		1+\epsilon&-1&0\\
		-1&2+\epsilon&-1\\
		0&-1&1+\epsilon\\
	\end{bmatrix},
	\]
	with the splitting 
	\[
	A_0(\epsilon) = \begin{bmatrix}
		1+\epsilon&-1&0\\
		-1&2+\epsilon&-1	 	
	\end{bmatrix},\quad A_1(\epsilon) =[0, -1, 1+\epsilon].
	\]
	It can be verified that $A(\epsilon)$ is nearly singular and  
	%%%%%%%%%%%%%%%%%%%A0A'%%%%%%%%%%%%%%%%%%%
	%	 \[
	%    A_0(\epsilon)A^\top(\epsilon)  = \begin{bmatrix}
		%	 	\epsilon^2+2\epsilon+2&-3-2\epsilon&1\\
		%	 	-3-2\epsilon&\epsilon^2+4\epsilon+6&-3-2\epsilon
		%	 \end{bmatrix} .
	% \] 
	%%%%%%%%%%%%%%%%%%%A0A'%%%%%%%%%%%%%%%%%%% 
	%%%%%%%%%%%%%%%%%%%%%%%%%%%%%%kerAe%%%%%%%%%%%%%%%%%
	%\begin{equation}\label{key}
	%		 \footnotesize
	%	\appker{A(\epsilon)}={}\myspan \left\{\left[ \frac{s \left(s^2+4 s+3\right)}{s^4+6 s^3+12s^2+8 s+3} , \frac{s {\left(s+1\right)}^2 \left(s+3\right)}{s^4+6 s^3+12 s^2+8 s+3} , \frac{s \left(s^4+7 s^3+16 s^2+13 s+3\right)}{s^4+6 s^3+12 s^2+8 s+3}\right]^\top\right\}.
	%\end{equation}
	%%%%%%%%%%%%%%%%%%%%%%%%%%%%%%kerAe%%%%%%%%%%%%%%%%%
	\[
	\begin{aligned}
		\appker{A(\epsilon)}={}&\myspan \left\{\left[1-4/3\epsilon+ O(\epsilon^2),1-\epsilon/3+O(\epsilon^2)  ,1+5/3\epsilon+O(\epsilon^2) \right]^\top\right\},\\ 
		%%%%%%%%%%%%%%%%%%%%%%%dual-kerAe%%%%%%%%%%%%%%%%%%%%%%%%
		%	\dappker{A(\epsilon)}={}&\myspan \left\{\left[ \frac{3s^2+8s+3}{s^4+6s^3+12s^2+8s+3}, \frac{2s^3+7s^2+8s+3}{s^4+6s^3+12s^2+8s+3} , 1 
		%	\right]^\top\right\},\\
		%%%%%%%%%%%%%%%%%%%%%%%dual-kerAe%%%%%%%%%%%%%%%%%%%%%%%%
		\dappker{A(\epsilon)}={}&\myspan \left\{\left[1+3\epsilon^2+O(\epsilon^3),1+5/3\epsilon^2+O(\epsilon^3),1\right]^\top\right\}.
	\end{aligned}
	\]
	Moreover, we have $\lambda_{\min}^+(A^\top_0(\epsilon)A_0(\epsilon))=4-\sqrt{13}+O(\epsilon)$ and $\lambda_{\min}^+(A^\top(\epsilon)A(\epsilon))= \epsilon^2$.
	%%%%%%%%%%%%%%%%%%%%%computation%%%%%%%%%%%%%%%%%%%%%%
	%	 \[ W_\epsilon={\rm span}\left\{\begin{bmatrix}
		%	 	\frac{3\varepsilon^2 + 8\varepsilon + 3}{\varepsilon^4 + 6\varepsilon^3 + 12\varepsilon^2 + 8\varepsilon + 3}, &
		%	 	\frac{(2\varepsilon + 3)(\varepsilon^2 + 2\varepsilon + 1)}{\varepsilon^4 + 6\varepsilon^3 + 12\varepsilon^2 + 8\varepsilon + 3} ,&
		%	 	1
		%	 \end{bmatrix}^\top\right\},\]
	%	 \[\bar W_\epsilon={\rm span}\left\{\begin{bmatrix}
		%	 	\frac{ (\epsilon + 1) (3\epsilon^{2} + 8\epsilon + 3) - (2\epsilon + 3) (\epsilon^{2} + 2\epsilon + 1) }{ \epsilon^{4} + 6\epsilon^{3} + 12\epsilon^{2} + 8\epsilon + 3 }\\
		%	 	\frac{ (2\epsilon + 3) (\epsilon + 2) (\epsilon^{2} + 2\epsilon + 1) - (3\epsilon^{2} + 8\epsilon + 3) }{ \epsilon^{4} + 6\epsilon^{3} + 12\epsilon^{2} + 8\epsilon + 3 } - 1 \\
		%	 	\epsilon - \frac{ (2\epsilon + 3) (\epsilon^{2} + 2\epsilon + 1) }{ \epsilon^{4} + 6\epsilon^{3} + 12\epsilon^{2} + 8\epsilon + 3 } + 1
		%	 \end{bmatrix}\right\}.\]
	%%%%%%%%%%%%%%%%%%%%%computation%%%%%%%%%%%%%%%%%%%%%%
\end{eg} 
\begin{eg}  Consider  
	\[A(\epsilon)=\begin{bmatrix}
		1&-1\\1+\epsilon&-1+\epsilon\\2&-2
	\end{bmatrix},\quad\text{with}\,\,
	A_0(\epsilon)=[1,-1]
	,\,
	A_1(\epsilon) =\begin{bmatrix}
		1+\epsilon&-1+\epsilon\\
		2 & -2
	\end{bmatrix}.
	\]
	It is clear that $A(\epsilon)$ is nearly singular. A direct computation leads to
	\[
	\dappker{A(\epsilon)} = \myspan\left\{
	[-1,1,0]^\top,
	[-2,0,1]^\top
	\right\}
	,\quad \appker{A(\epsilon)} = \myspan\left\{[1,1]^\top\right\}.
	\]
	We have $\lambda_{\min}^+(A^\top_0(\epsilon)A_0(\epsilon))=2$ and $\lambda_{\min}^+(A^\top(\epsilon)A(\epsilon))= O(\epsilon^2)$. Note that $A(\epsilon)$ is not full row rank and we have \( \dim \appker{A(\epsilon)} < \dim 		\dappker{A(\epsilon)}\).
\end{eg}

\section{A Robust Kaczmarz Method}
\label{sec:ssc-kacz}
From now on, we focus on the nearly singular linear system \cref{eq:Ax=b}, where $b\in\ran{A(\epsilon)}$ and  $A:[0,\bar{\epsilon}]\to\R^{m\times n}$ satisfies \cref{assum:A0-A1,def:nsp}.
\subsection{The classical Kaczmarz}
The classical Kaczmarz for solving \cref{eq:Ax=b} is
\begin{equation} 
	\label{eq:kacz}
	%	\tag{Kaczmarz}
	%	\left\{
	\begin{aligned}
		%		{}&	v= x_k,\\
		{}&	x_{k+1} = x_k + \omega\frac{b_{i} - A_{(i)}(\epsilon) x_k}{\| A_{(i)}(\epsilon)\|^{2}} A_{(i)}^\top(\epsilon),
		%		\quad 1\leq i\leq m,\omega>0\\
		%		{}&	x_{k+1} =v.
	\end{aligned}
	%	\right.
\end{equation}
where $\omega>0$ denotes the relaxation parameter; see \cite{moorman2021randomized}. Recall that \cref{eq:Ax=b} is consistent since $b\in \ran{A(\epsilon)}$. In this case,  the general solution is given by $x^*=x_{LS}+\widehat{x}$, where $x_{LS}=A^+b$ and $\widehat{x}\in \myker{A(\epsilon)}$ is arbitrary. It is known that \cite{oswald_convergence_2015} the classical Kaczmarz method converges to $x_{LS}$ provided that $x_0\in \row{A(\epsilon)}$. Following \cite[Example 2]{oswald_convergence_2015}, we can reformulate \cref{eq:kacz} into the SSC framework (cf.\cref{sec:ssc}) with the following setting:
%	\begin{itemize}
	%		\item $V:= \row{A(\epsilon)}$ with the Euclidean inner product $(x,y) :=x^\top y $;
	%		\item $A:=I$ is the identity operator and $b:= x_{LS}$;
	%		\item $V= \sum_{i=1}^{m}V_i$ with $V_i:=\R$ and $R_i:=A_{(i)}^\top(\epsilon):V_i\to V$ for $1\leq i\leq m$;
	%		\item $a_i(x_i,y_i)=d_ix_iy_i$ with $d_i=\nm{A_{(i)}(\epsilon)}^2/\omega$ for $1\leq i\leq m$.
	%	\end{itemize}	
\begin{equation}\label{eq:set-kacz-ssc}
	\renewcommand{\arraystretch}{1.2}
	\begin{array}{l}
		\textit{{\footnotesize$\bullet$}\,\, $V:= \row{A(\epsilon)}$ with the Euclidean inner product $(x,y) :=x^\top y $;}\\
		\textit{{\footnotesize$\bullet$}\,\, $A:=I:V\to V$ is the identity operator and $f:= x_{LS}$;}\\
		\text{{\footnotesize$\bullet$}\,\, $V= \sum_{i=1}^{m}V_i$ with $V_i:=\R$ and $R_i:=A_{(i)}^\top(\epsilon):V_i\to V$ for $1\leq i\leq m$;}\\
		\textit{{\footnotesize$\bullet$}\,\, $a_i(x_i,y_i)=d_ix_iy_i$ with $d_i=\nm{A_{(i)}(\epsilon)}^2>0$ for $1\leq i\leq m$.}
	\end{array}
\end{equation}

With this, it is easy to get $T_i = R_i^* =   A_{(i)}(\epsilon)/d_i,\,\mathcal R = A^\top(\epsilon)$ and $ \mathcal R^T=\mathcal R^\star =  D_\epsilon^{-1}A(\epsilon)$
%\[
% \mathcal R^T=\mathcal R^\star =  D_\epsilon^{-1}A(\epsilon),\,\bm D = \diag{D_\epsilon^{-1}A(\epsilon)A^\top(\epsilon)},\, \bm U = \triu{D_\epsilon^{-1}A(\epsilon)A^\top(\epsilon)},
%\]
with $D_\epsilon = \diag{d_1,\cdots,d_m} = \diag{ A(\epsilon)A^\top(\epsilon)}\succeq \eta_0I$ (cf.\cref{eq:d0-d1}). We have the stationary iteration form
\begin{equation}
	\label{eq:iter-Bssc-kacz}
	x_{k+1} = x_k + B_{\rm kacz}(\omega)(x_{ LS}-x_k),
\end{equation}
where $B_{\rm kacz}(\omega):V\to V$ satisfies $	I-B_{\rm kacz}(\omega) =(I-\omega R_mT_m)\cdots(I-\omega R_1T_1)$.
Since $
d_i>0$ (cf. \cref{assum:A0-A1}),  Assumptions \ref{assum:V-Vj}, and \ref{assum:aj-Vj} hold true.

Under the setting \cref{eq:set-kacz-ssc}, the product space $\bm V = \R^m$ has the inner product $\inner{\bm u,\bm v}_{\bm V}:=\inner{\bm u,\bm v}_{D_\epsilon}$.  Define 
\[\small
\begin{aligned}
	\delta_{\max}(A(\epsilon)):={}&\sup_{\substack{v\in\row{A(\epsilon)}\\\nm{v}=1}}(\mathcal R^T v,\mathcal R^Tv)_{\bm V},\quad 
	\delta_{\min}(A(\epsilon)):={} \inf_{\substack{v\in\row{A(\epsilon)}\\\nm{v}=1}}(\mathcal R^T v,\mathcal R^Tv)_{\bm V}.
\end{aligned}
\]
Then we have (cf.\cite[Example 2]{oswald_convergence_2015})
\begin{equation}\label{eq:sigma-max-min}
	\delta_{\min}(A(\epsilon))={}\lambda^+_{\min}(A^\top(\epsilon)D_\epsilon^{-1}A(\epsilon)),\quad \delta_{\max}(A(\epsilon))={}\lambda_{\max}(A^\top(\epsilon)D_\epsilon^{-1}A(\epsilon)).
\end{equation}  According to \cite[Theorem 1]{oswald_convergence_2015}, if $\omega\in(0,2/	\delta_{\max}(A(\epsilon) )$, then \cref{assum:TjRj} holds true and we have 
\begin{equation}\label{eq:conv-kacz}
	\nm{I-B_{\rm kacz}(\omega)}^2_{\row{A(\epsilon)}\to \row{A(\epsilon)}} \leq 1-\frac{4(2-\omega\delta_{\max}(A(\epsilon) ))	\delta_{\min}(A(\epsilon))}{\left(  \omega\delta_{\max}(A(\epsilon) )  \lfloor \log_{2}(2m) \rfloor  + 2 \right)^{2}}.
	%\frac{	\delta_{\min}(A(\epsilon))}{1+\lfloor \log_{2}(2m) \rfloor}.
\end{equation}
This together with the  contraction estimate (cf.\cref{eq:iter-Bssc-kacz})
\[
\nm{x_{k+1}-x_{LS}}\leq	\nm{I-B_{\rm kacz}(\omega)}_{\row{A(\epsilon)}\to \row{A(\epsilon)}}\nm{x_k-x_{LS}}
\]
yields the linear rate 
\[
\nm{x_k-x_{LS}}^2\leq \left(1-\frac{4(2-\omega\delta_{\max}(A(\epsilon) ))	\delta_{\min}(A(\epsilon))}{\left(  \omega\delta_{\max}(A(\epsilon) )  \lfloor \log_{2}(2m) \rfloor  + 2 \right)^{2}}\right)^k\nm{x_0-x_{LS}}^2.
\]
An optimal choice of the relaxation parameter \cite[Eq.(27)]{oswald_convergence_2015} 
\[
\omega^* =\frac{2}{\delta_{\max}(A(\epsilon))(2+\lfloor \log_{2}(2m) \rfloor)}
\]
%$\omega^* =2/\delta_{\max}(A(\epsilon))/(2+\lfloor \log_{2}(2m) \rfloor)$
%\[
%\omega^* =\frac{2}{2+\lfloor \log_{2}(2m) \rfloor},
%%\quad\Longrightarrow\quad \nm{I-B_{\rm kacz}(\omega^*)}^2_{V\to V} \leq1-
%%\frac{	\delta_{\min}(A(\epsilon))}{1+\lfloor \log_{2}(2m) \rfloor},
%\]
gives
\begin{equation}\label{eq:rate-kacz}
	\nm{x_k-x_{LS}}^2\leq \left( 1-
	\frac{	\delta_{\min}(A(\epsilon))}{\delta_{\max}(A(\epsilon))(1+\lfloor \log_{2}(2m) \rfloor)}\right)^k\nm{x_0-x_{LS}}^2.
\end{equation}
However,  it is clear 
%that Invoking the definition \cref{eq:RT} of $\mathcal R^T$ and our setting \cref{eq:set-kacz-ssc}, it is clear 
that $\delta_{\min}(A(\epsilon))={}\lambda^+_{\min}(A^\top(\epsilon)D_\epsilon^{-1}A(\epsilon))\leq 1/\eta_0\lambda^+_{\min}(A^\top(\epsilon)A(\epsilon))$. Thus, by \cref{lem:up-bd-lambda-min+}, for nearly singular systems, $\lambda^+_{\min}(A^\top(\epsilon)A(\epsilon))\to0$ as $\epsilon\to 0+$ and the rate of the Kaczmarz iteration \cref{eq:kacz} deteriorates. This can be can observed from \cref{table-comparison}, and we refer to \cref{sec:experiments} for more numerical evidences.
\subsection{From SSC to robust Kaczmarz}
\label{sec:kak}
%In view of \cref{eq:conv-kacz}, the convergence rate of the classical Kaczmarz iteration \cref{eq:kacz} depends on the smallest nonzero eigenvalue of $A^\top(\epsilon)D_\epsilon^{-1}A(\epsilon)$, which, however, approaches to zero as $\epsilon\to0$ because of the nearly singular property of $A(\epsilon)$. Therefore, if $\epsilon$ is decreasing, then the number of iterations for satisfying a given tolerance grows dramatically. This can be can observed from \cref{tab:exp-ns,fig:combined2}.

To overcome the near singularity, motivated by \cite{lee_robust_2007}, we add the
%For the case where the matrix $A$ in \cref{eq:Ax=b} is nearly singular and full of row rank, we add the 
approximate kernel $\appker{A(\epsilon)}$ (cf.\cref{def:ak}) to the space decomposition and consider the  SSC setting:
\begin{equation}\label{eq:set-kak-ssc}\small
	\renewcommand{\arraystretch}{1.2}
	\begin{array}{l}
		\textit{{\footnotesize$\bullet$}\,\, $V:= \row{A(\epsilon)}$ with the Euclidean inner product $(x,y) :=x^\top y $;}\\
		\textit{{\footnotesize$\bullet$}\,\, $A:=I:V\to V$ is the identity operator and $f:= x_{LS}$;}\\
		\text{{\footnotesize$\bullet$}\,\, $V= \sum_{i=1}^{m+1}V_i$ with $V_i:=\R$ for $1\leq i\leq m$ and $V_{m+1}:=\appker{A(\epsilon)}$;}\\
		\text{{\footnotesize$\bullet$}\,\,  $R_i:=A_{(i)}^\top(\epsilon):V_i\to V$ for $1\leq i\leq m$ and $R_{m+1} = \iota:V_{m+1}\to V$;}\\		
		\textit{{\footnotesize$\bullet$}\,\, $a_i(x_i,y_i)=d_ix_iy_i$ with $d_i=\nm{A_{(i)}(\epsilon)}^2$ for $1\leq i\leq m$ and $a_{m+1}(x,y) = x^\top y$.}
	\end{array}
\end{equation}

%$R_{m+1}^*=w_j(w_j^\top w_j)^{-1}w_j^\top$ 
With this preparation, it is easy to check Assumptions \ref{assum:V-Vj} and \ref{assum:aj-Vj}, and we find that $T_i = R_i^* =   A_{(i)}(\epsilon)/d_i$ for $1\leq i\leq m$ and $T_{m+1}=R_{m+1}^*:=R:\row{A(\epsilon)}\to \appker{A(\epsilon)}$ is the orthogonal projection. In this case,  we have $\frac2\omega I-T_iR_i = \frac2\omega-1>0$ for all $1\leq i\leq m+1$, and  \cref{assum:TjRj} holds true. This leads to an SSC presentation: given $x_k\in \row{A(\epsilon)}$, set $v_{k,1} = x_k$ and update $x_{k+1}=v_{k,m+2}$ by 
\begin{equation}\label{eq:kak}
	%	\small
	\tag{KaK}
	\left\{
	\begin{aligned}
		{}&		v_{k,i+1} 
		%	= v_{k,i} +\omega R_iR_i^*(x_{LS}-v_{k,i})
		=v_{k,i} + \omega\frac{b_{i} - A_{(i)}(\epsilon) v_{k,i}}{\| A_{(i)}(\epsilon)\|^{2}} A_{(i)}^\top(\epsilon),\quad\,1\leq i\leq m,\\
		{}&		v_{k,m+2}
		%	=v_{k,m+1}+\omega R_{m+1}R^*_{m+1}(x_{LS}-v_{k,m+1})
		=v_{k,m+1}+\omega R (x_{LS}-v_{k,m+1}),
	\end{aligned}
	\right.
\end{equation}
where $\omega\in(0,2)$. As an extension of Kaczmarz \cref{eq:kacz},  we called it the {\it Kernel-augmented Kaczmarz} (KaK) method, which is equivalent to a stationary iteration
\begin{equation}\label{eq:iter-Bssc-robu-kacz}
	x_{k+1} = x_k + B_{\rm kak}(\omega)(x_{LS}-x_k),
\end{equation}
where  the iterator $B_{\rm kak}(\omega):\row{A(\epsilon)}\to \row{A(\epsilon)}$ satisfies 
\[
I-B_{\rm kak}(\omega) =(I-\omega R_{m+1}R_{m+1}^*)(I-B_{\rm kacz}(\omega))
\]
%\begin{assum}\label{assum:d0-d1}
%	Assume that there exist two positive constants $d_0,d_1>0$ in dependent on $\epsilon$ such that $\min_{1\leq i\leq m}\nm{A_{(i)}(\epsilon)}\geq d_0$  and $ \lambda_{\max}(A^\top(\epsilon)D_\epsilon^{-1}A(\epsilon))\leq d_1$ for all $\epsilon\in[0,\infty)$.
%\end{assum}

Based on \cref{thm:xz-id}, we have the following estimate, which paves the way for proving the uniform rate of convergence of \cref{eq:kak}.
\begin{lem}\label{lem:app-xz-kak}
	With the setting \cref{eq:set-kak-ssc} and $\omega\in(0,2/(1+\delta_{\max}(A(\epsilon))))$, for the kernel-augmented Kaczmarz method \cref{eq:kak}, we have
	\begin{equation}\label{eq:app-xz-kak}
		\nm{I-B_{\rm kak}(\omega)}^2_{\row{A(\epsilon)}\to \row{A(\epsilon)}} \leq 1-C_0(\epsilon,\omega)\inf_{v\in\row{A(\epsilon)}\backslash\{0\}}\frac{(\mathcal R\mathcal R^T v,v)}{(v,v)},
	\end{equation}
	where
	\begin{equation}\label{eq:C0-e-w}
		C_0(\epsilon,\omega) :={}	\frac{4\omega(2-\omega(1+\delta_{\max}(A(\epsilon))))}{\left(2+  \omega  \lfloor \log_{2}(2m) \rfloor \delta_{\max}(A(\epsilon)) + 2\omega\nm{D_\epsilon^{-1}A(\epsilon)} \right)^{2}}.
	\end{equation}
	The optimal choice 
	\begin{equation}\label{eq:opt-w}
		\omega^*  =\frac{2}{2+(2+\lfloor\log _{2}(2m)\rfloor)\delta_{\max}(A(\epsilon))+2\nm{D_\epsilon^{-1}A(\epsilon)}}
	\end{equation}
	leads to the maximal value
	\[
	C_0(\epsilon,\omega^*)=\frac{1}{1+(1+\lfloor\log _{2}(2m)\rfloor)\delta_{\max}(A(\epsilon))+2\nm{D_\epsilon^{-1}A(\epsilon)}}.
	%	1-\frac{\inf_{v\in\row{A(\epsilon)}\backslash\{0\}}\frac{(\mathcal R\mathcal R^T v,v)}{(v,v)}}{2+\left\lfloor\log _{2}(2 m)\right\rfloor+2\nm{D_\epsilon^{-1}A(\epsilon)}}.
	\]
\end{lem}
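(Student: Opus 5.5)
The plan is to apply \cref{thm:xz-id} to the setting \cref{eq:set-kak-ssc}. Here $A=I$ on $V=\row{A(\epsilon)}$, so $\nm{\cdot}_A=\nm{\cdot}$. The claimed bound \cref{eq:app-xz-kak} will then follow once we show
\[
c_0(\omega)\leq \frac{1}{C_0(\epsilon,\omega)\,\lambda},\qquad \lambda:=\inf_{v\in\row{A(\epsilon)}\backslash\{0\}}\frac{(\mathcal R\mathcal R^Tv,v)}{(v,v)}.
\]
Following the argument of \cite[Theorem 1]{oswald_convergence_2015}, I will not use the exact infimum in \cref{eq:c0}. Instead I take one explicit (quasi-optimal) decomposition and bound the operator $(2\bm I-\omega\bm D)^{-1/2}(\bm I+\omega\bm U)$ on $\bm V$.

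\textbf{Step 1 (decomposition).} Since $\mathcal R\mathcal R^T$ is symmetric and positive definite on $V$ with smallest eigenvalue $\lambda$, set $\bm v:=\mathcal R^T(\mathcal R\mathcal R^T)^{-1}v$. Then $\mathcal R\bm v=v$ and
\[
\nm{\bm v}_{\bm V}^2=((\mathcal R\mathcal R^T)^{-1}v,v)\leq \lambda^{-1}\nm{v}^2.
\]

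\textbf{Step 2 (operator bound).} The key fact is $(2\bm I-\omega\bm D)^{-1}\preceq (2-\omega(1+\delta_{\max}))^{-1}\bm I$. Combined with Step 1 this gives
\[
c_0(\omega)\leq \frac{\nm{\bm I+\omega\bm U}_{\bm V\to\bm V}^2}{\omega\,\bigl(2-\omega(1+\delta_{\max})\bigr)\,\lambda}.
\]
Each diagonal block is $T_iR_i=1$ for $i\le m$ and the identity on $V_{m+1}$, so $2\bm I-\omega\bm D=(2-\omega)\bm I$ would already suffice. However, the statement contains the weaker factor $2-\omega(1+\delta_{\max})$, which suggests the authors control the $\bm D$-term together with the coupled block $\bm A$, e.g. via $\nm{\bm A}\le 1+\delta_{\max}$. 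I will simply use the weaker bound $2-\omega(1+\delta_{\max})\le 2-\omega$, which is legitimate and matches the stated constant.

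\textbf{Step 3 (the strictly upper part, main obstacle).} I split
\[
\bm U=\begin{bmatrix}\bm U_{11}&\bm u\\0&0\end{bmatrix}.
\]
\begin{itemize}
\item $\bm U_{11}=\triu{D_\epsilon^{-1}A(\epsilon)A^\top(\epsilon)}$ is the strictly upper triangular part of the Kaczmarz block. In the $D_\epsilon$-inner product, $D_\epsilon^{-1}AA^\top$ is symmetric with norm $\delta_{\max}$. The triangular truncation estimate used by Oswald and Zhou then gives
\[
\nm{\bm U_{11}}\le \tfrac12\lfloor\log_2(2m)\rfloor\,\delta_{\max}.
\]
\item $\bm u:V_{m+1}\to\R^m$ is the column block $T_iR_{m+1}=A_{(i)}(\epsilon)/d_i$ restricted to $\appker{A(\epsilon)}$, i.e. the map $w\mapsto D_\epsilon^{-1}A(\epsilon)w$. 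I will bound its norm from $(V_{m+1},\nm{\cdot})$ to $(\R^m,\nm{\cdot}_{D_\epsilon})$ by $\nm{D_\epsilon^{-1}A(\epsilon)}$, possibly after absorbing the change between the $D_\epsilon$-weighted and Euclidean norms.
\end{itemize}
The triangle inequality then yields
\[
\nm{\bm I+\omega\bm U}\le 1+\tfrac{\omega}{2}\lfloor\log_2(2m)\rfloor\delta_{\max}+\omega\nm{D_\epsilon^{-1}A(\epsilon)}.
\]
Squaring and multiplying by 4 reproduces the denominator of $C_0(\epsilon,\omega)$ in \cref{eq:C0-e-w}. Getting exactly the factor $2\nm{D_\epsilon^{-1}A(\epsilon)}$ requires care with the mixed norms in the coupling column $\bm u$; I expect this to be the delicate point, and I would first check it against Example~\ref{eg:ex1}.

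\textbf{Step 4 (optimizing $\omega$).} Write $a=1+\delta_{\max}$ and $b=\lfloor\log_2(2m)\rfloor\delta_{\max}+2\nm{D_\epsilon^{-1}A}$. Then
\[
C_0=\frac{4\omega(2-a\omega)}{(2+b\omega)^2}.
\]
Setting the derivative to zero gives the stationary condition $\omega=2/(2a+b)$, which is exactly \cref{eq:opt-w}. Substituting back gives $C_0(\epsilon,\omega^*)=1/(a+b/2)$, i.e. the displayed maximal value.
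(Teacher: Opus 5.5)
Your route is the paper's route. It uses \cref{eq:xz-id}, the bound $(2\bm I-\omega\bm D)^{-1}\preceq(2-\omega(1+\delta_{\max}))^{-1}\bm I$, the estimate $\nm{\bm I+\omega\bm U}\le 1+\omega\nm{\bm U}$, the decomposition $\bm v=\mathcal R^T(\mathcal R\mathcal R^T)^{-1}v$ (exactly the minimizer in \cref{lem:inv-batB}), and the Oswald--Zhou truncation bound on $\bm U_{11}$. Your remark that actually $\bm D=\bm I$ is correct, and sharper than the paper's $\nm{\bm D}_{\bm V\to\bm V}\le 1+\delta_{\max}$.

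The genuine gap is the one you flagged in Step 3, and it cannot be closed by ``absorbing'' the weight. Since $\bm V_0=\R^m$ carries the $D_\epsilon$-norm,
\[
\nm{\bm u w}_{\bm V_0}^2=\nm{D_\epsilon^{-1}A(\epsilon)w}_{D_\epsilon}^2=w^\top A^\top(\epsilon)D_\epsilon^{-1}A(\epsilon)w=\nm{D_\epsilon^{-1/2}A(\epsilon)w}^2 .
\]
Now rescale $A(\epsilon)\mapsto tA(\epsilon)$. The KaK error operator, $\delta_{\max}$, $\lambda$ and $\nm{\bm u}$ are all unchanged. Moreover $\nm{\bm u}>0$ for $\epsilon>0$, because $\appker{A(\epsilon)}\subset\row{A(\epsilon)}$ is nontrivial. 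By contrast, $\nm{D_\epsilon^{-1}A(\epsilon)}$ is multiplied by $1/t$. So the inequality $\nm{\bm u}\le\nm{D_\epsilon^{-1}A(\epsilon)}$ that you need is false for large $t$. In general one only has $\nm{D_\epsilon^{-1/2}A(\epsilon)w}\le\sqrt{\eta_1}\,\nm{D_\epsilon^{-1}A(\epsilon)}\nm{w}$, which gives your constant only under a normalization such as $\eta_1\le 1$. Example~\ref{eg:ex1} will not expose this, since there $\nm{\bm u}=\epsilon/\sqrt{1+\epsilon^2}$ is tiny; a row-scaling test will.

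What your argument does prove is the lemma with $2\omega\nm{D_\epsilon^{-1}A(\epsilon)}$ replaced by $2\omega\sqrt{\delta_{\max}(A(\epsilon))}$, because $\nm{D_\epsilon^{-1/2}A(\epsilon)}^2=\delta_{\max}(A(\epsilon))$. You may even use $\sup_{w\in\appker{A(\epsilon)},\,\nm{w}=1}\nm{D_1^{-1/2}A_1(\epsilon)w}$, with $D_1$ the trailing block of $D_\epsilon$, since $A_0(\epsilon)w=0$. The values $\omega^*$ and $C_0(\epsilon,\omega^*)$ change accordingly. This quantity is still bounded by $C_2/\sqrt{\eta_0}$, which is exactly what \cref{eq:rhow} uses, so the robustness conclusions survive.

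You should know that the paper's own proof has the same defect. In bounding $\nm{\bm U}_{\bm V\to\bm V}$ it writes $\nm{D_\epsilon^{-1}A(\epsilon)v}$ in the Euclidean norm where the $D_\epsilon$-norm is required. So the printed constant is not delivered by this route there either.

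Finally, Step 4 contains an arithmetic slip. At $\omega=2/(2a+b)$ one has $2-a\omega=2(a+b)/(2a+b)$ and $2+b\omega=4(a+b)/(2a+b)$. Hence $C_0=1/(a+b)$, not $1/(a+b/2)$, and it is $1/(a+b)$ that equals the displayed maximal value.
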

\begin{proof}
	It is clear that 
	\begin{equation}\label{eq:bmA}\small
		\mathcal R = [A^\top(\epsilon),I],\,\, \mathcal R^T=\mathcal R^\star =  \begin{bmatrix}
			D_\epsilon^{-1}A(\epsilon)\\ R_{m+1}R_{m+1}^*
		\end{bmatrix},\,
		\bm A = \begin{bmatrix}
			D_\epsilon^{-1}A(\epsilon)A^\top(\epsilon)&D_\epsilon^{-1}A(\epsilon)\\R_{m+1}R_{m+1}^*A^\top(\epsilon) &R_{m+1}R_{m+1}^*
		\end{bmatrix}.
	\end{equation}
	Let $\bm D_0 :=	\diag{D_\epsilon^{-1}A(\epsilon)A^\top(\epsilon)}$ and $\bm U_0:=	\triu{D_\epsilon^{-1}A(\epsilon)A^\top(\epsilon)}$, then we have the block decomposition
	\[
	\bm D = \begin{bmatrix}
		\bm D_0&O\\O&R_{m+1}R_{m+1}^*
	\end{bmatrix},\quad 
	\bm U = \begin{bmatrix}
		\bm U_0&D_\epsilon^{-1}A(\epsilon)\\
		O&O
	\end{bmatrix}. 
	\]
	Let $\bm V_0=\R^m$. The product space $\bm V = \bm V_0\times V_{m+1}$ has the inner product $\inner{\bm u,\bm v}_{\bm V}:=\inner{\bm u_0,\bm v_0}_{D_\epsilon}+\inner{u,v}$, for any $\bm u=(\bm u_0,u)\in \bm V$ and $\bm v=(\bm v_0,v)\in \bm V$. Recall that $R_{m+1}=\iota:V_{m+1}\to V$ and $T_{m+1}R_{m+1}v=T_{m+1}v$ for all $v\in V_{m+1}$. 
	
	Thanks to \cite[Eq.(23)]{oswald_convergence_2015}, we have 
	$\nm{\bm D_0}_{\bm V_0\to\bm V_0}\leq \delta_{\max}(A(\epsilon))$. Therefore, we have 
	\[
	\begin{aligned}
		\inner{\bm D\bm v,\bm D\bm v}_{\bm V} ={}&	\inner{\bm D_0\bm v_0,\bm D_0\bm v_0}_{\bm V_0}+a_{m+1}(T_{m+1}v,T_{m+1}v)\\
		\leq{}&\delta^2_{\max}(A(\epsilon))\nm{\bm v_0}^2_{\bm V_0}+\nm{ T_{m+1}v}^2\\ 
		\leq{}&\left(1+\delta^2_{\max}(A(\epsilon))\right)\left(\nm{\bm v_0}^2_{\bm V_0}+\nm{ R^*_{m+1}v}^2\right)=\left(1+\delta^2_{\max}(A(\epsilon))\right)\nm{\bm v}_{\bm V}^2,
		%	\\ 	 ={}&a_{m+1}(T_{m+1}v,T_{m+1}v)+\sum_{i=1}^{m}a_i(T_iR_i\bm v_0^i,T_iR_i\bm v_0^i) \\
		%	={}&\nm{T_{m+1}v}^2+\sum_{i=1}^{m} \inner{R_iT_iR_i\bm v_0^i,R_i\bm v_0^i}\\
		%	%	\leq {}&\sum_{i=1}^{m} \inner{\left(\sum_{j=1}^{m}R_jT_j\right)R_i\bm v_i,R_i\bm v_i} = \sum_{i=1}^{m}\inner{\mathcal R\mathcal R^T R_i\bm v_i,R_i\bm v_i}\\
		%	%=	{}&\sum_{i=1}^{m}\inner{\mathcal R^T R_i\bm v_i,\mathcal R^T R_i\bm v_i}_{\bm V}\leq \delta_{\max}(A(\epsilon))\sum_{i=1}^{m}\inner{R_i\bm v_i, R_i\bm v_i}\\
		%	={}&\inner{T_{m+1}v,v}+\sum_{i=1}^{m} \inner{R_i\bm v_0^i,R_i\bm v_0^i}\quad\left(T_iR_i = 1,\,1\leq i\leq m\right)\\
		%	={}&\sum_{i=1}^{m+1}a_i\inner{T_iR_i\bm v_i,  \bm v_i}=\inner{\bm D\bm v,\bm v}_{\bm V},
	\end{aligned}
	\]
	which implies $\nm{\bm D}_{\bm V\to\bm V}\leq 1+\delta_{\max}(A(\epsilon))$.  
	Since $\omega\in(0,2/(1+\delta_{\max}(A(\epsilon))))$, $2\bm I-\omega\bm D$ is SPD and in view of \cref{thm:xz-id}, we have
	$	\nm{I-B_{\rm kak}(\omega)}^2_{\row{A(\epsilon)}\to \row{A(\epsilon)}} = 1-\frac{1}{c_0(\omega)}$, where
	\begin{equation}\label{eq:c0omega}\small
		\begin{aligned}
			c_0(\omega) ={}&\frac{1}{\omega} \sup_{v\in \row{A(\epsilon)}\backslash\{0\}} \inf_{\mathcal{R}\bm v =v}\frac{\inner{	\left(2\bm I-\omega\bm D\right)^{-1}	\left(\bm I+\omega\bm U\right)\bm v,	\left(\bm I+\omega\bm U\right)\bm v}_{\bm V}}{\inner{v,v}}\\
			\leq{}	&\frac{1}{\omega(2-\omega\nm{\bm D}_{\bm V\to\bm V})} \sup_{v\in \row{A(\epsilon)}\backslash\{0\}} \inf_{\mathcal{R} \bm v =v}\frac{\inner{ 	\left(\bm I+\omega \bm U\right)\bm v,	\left(\bm I+\omega \bm U\right)\bm v}_{\bm V}}{\inner{v,v}}.
		\end{aligned}
	\end{equation}
	In addition, noticing $\bm U'_0=\bm L'_0$ and the fact $\nm{\bm L_0}_{\bm V_0\to\bm V_0}\leq\lfloor \log_{2}(2m) \rfloor/2 \delta_{\max}(A(\epsilon)) $ (cf.\cite[Eq.(25)]{oswald_convergence_2015}), it follows that 
	\[
	\begin{aligned}
		\nm{\bm U}_{\bm V\to\bm V}^2 ={}& \sup_{\substack{\bm v=(\bm v_0,v)\in \bm V\\\nm{\bm v}_{\bm V}=1}}\nm{\bm U\bm v}_{\bm V}^2=\sup_{\substack{\bm v=(\bm v_0,v)\in \bm V\\\nm{\bm v}_{\bm V}=1}}\left(\nm{\bm U_0\bm v_0}_{\bm V_0}^2+\nm{D_\epsilon^{-1}A(\epsilon)v}^2\right)\\
		\leq{}&\sup_{\substack{\bm v=(\bm v_0,v)\in \bm V\\\nm{\bm v}_{\bm V}=1}}\left( \frac{\lfloor \log_{2}(2m) \rfloor^2}{4}  \delta^2_{\max}(A(\epsilon))\nm{\bm v_0}_{\bm V_0}^2+\nm{D_\epsilon^{-1}A(\epsilon)}^2\nm{v}^2\right)\\
		\leq{}&\frac{\lfloor \log_{2}(2m) \rfloor^2}{4}  \delta^2_{\max}(A(\epsilon))+\nm{D_\epsilon^{-1}A(\epsilon)}^2,
	\end{aligned}
	\]
	%Since by \cref{eq:d0-d1} we have 
	%\[
	%\nm{D_\epsilon^{-1}A(\epsilon)}^2\leq \nm{D_\epsilon^{-1}}\nm{D_\epsilon^{-1/2}A(\epsilon)}^2\leq 
	%%=\nm{D_\epsilon^{-1}}\lambda_{\max}(A^\top(\epsilon)D_\epsilon^{-1}A(\epsilon))\leq 
	%\lambda_{\max}(A^\top(\epsilon)D_\epsilon^{-1}A(\epsilon))/\eta_0,
	%\]
	which gives $	\nm{\bm U}_{\bm V\to\bm V}\leq \frac{1}{2} \lfloor \log_{2}(2m) \rfloor\delta_{\max}(A(\epsilon))+\nm{D_\epsilon^{-1}A(\epsilon)}$.
	%\begin{equation*}
	%%	\label{est-U}
	%	\nm{\bm U}_{\bm V\to\bm V}\leq \frac{1}{2} \lfloor \log_{2}(2m) \rfloor+\nm{D_\epsilon^{-1}A(\epsilon)}.
	%\end{equation*}
	Hence, plugging the above estimates into \cref{eq:c0omega}, we get
	\begin{align*}	c_0(\omega)  
		%	\leq{}	&\frac{1}{\omega(2-\omega\nm{\bm D}_{\bm V\to\bm V})} \sup_{v\in \row{A(\epsilon)}\backslash\{0\}} \inf_{\mathcal{R} \bm v =v}\frac{\inner{ 	\left(\bm I+\omega \bm U\right)\bm v,	\left(\bm I+\omega \bm U\right)\bm v}_{\bm V}}{\inner{v,v}}\\
		\leq{}	&
		%	\frac{\left(  \omega  \lfloor \log_{2}(2m) \rfloor  + 2\omega\sqrt{d_1/d_0}+2 \right)^{2}}{4\omega(2-\omega)} 
		\frac{\left(1+\omega\nm{\bm U}_{\bm V\to\bm V}\right)^2}{\omega(2-\omega\nm{\bm D}_{\bm V\to\bm V})}
		\sup_{v\in \row{A(\epsilon)}\backslash\{0\}} \inf_{\mathcal{R} \bm v =v}\frac{\inner{\bm v,\bm v}_{\bm V}}{\inner{
				v,v}}\\
		\leq{}	&
		%	\frac{\left(  \omega  \lfloor \log_{2}(2m) \rfloor  + 2\omega\sqrt{d_1/d_0}+2 \right)^{2}}{4\omega(2-\omega)} 
		\frac{1}{C_0(\epsilon,\omega)}
		\sup_{v\in \row{A(\epsilon)}\backslash\{0\}} \inf_{\mathcal{R} \bm v =v}\frac{\inner{\bm v,\bm v}_{\bm V}}{\inner{
				v,v}}.
		%	={}	&\frac{\left(  \omega  \lfloor \log_{2}(2m) \rfloor  + 2\omega\sqrt{d_1/d_0}+2 \right)^{2}}{4\omega(2-\omega)} \left(\inf_{v\in\row{A(\epsilon)}\backslash\{0\}}\frac{(\mathcal R\mathcal R^T v,v)}{(v,v)}\right)^{-1}.
	\end{align*}
	Since $\mathcal{R}:\bm V\to V$ is surjective and $\mathcal R^\star=\mathcal R^T$, by \cref{lem:inv-batB}, we see that
	\[
	\begin{aligned}
		{}&	\sup_{v\in \row{A(\epsilon)}\backslash\{0\}}\inf_{\mathcal{R}\bm v = v}\frac{(\bm v,\bm v)_{\bm V}}{(v,v)}=\sup_{v\in \row{A(\epsilon)}\backslash\{0\}}\frac{\left( (\mathcal{R}\mathcal{R}^{T})^{-1}v, v \right)}{\inner{v,v}}\\
		={}&\left(\inf_{v\in \row{A(\epsilon)}\backslash\{0\}}\frac{(\mathcal{R}\mathcal{R}^{T}v,v)}{\inner{v,v}}\right)^{-1}
		={}\left(\inf_{v\in \row{A(\epsilon)}\backslash\{0\}}\frac{(\mathcal{R}^{T}v,\mathcal{R}v)_{\bm V}}{\inner{v,v}}\right)^{-1}.
	\end{aligned}
	\]
	This together with the previous inequality gives \cref{eq:app-xz-kak} and concludes the proof.
\end{proof}

The final convergence result of \cref{eq:kak} is given by the following theorem, which implies a uniform contraction number for small $\epsilon$; see \cref{rem:rho0} for more discussions.
\begin{thm}
	\label{thm:conv_robust}
	Let  $\omega\in(0,2/(1+\delta_{\max}(A(\epsilon))))$ and define \begin{equation}\label{eq:rho}
		\rho(\epsilon,\omega):=C_0(\epsilon,\omega)\left( 	\min\left(1,\sigma_0(\epsilon)/\eta_1\right)-C_1(\epsilon)/\eta_0\right),
	\end{equation}
	where $\sigma_0(\epsilon)$  and $C_0(\epsilon,\omega)$ are defined respectively by \cref{eq:sigma0,eq:C0-e-w} and
	\[
	C_1(\epsilon) :=  \nm{E_1}^2+2\nm{A_1(0)}\nm{E_1}+\nm{E_0}\nm{A_1(0)}\nm{\Pi},
	\]
	with $E_0:=A_0(\epsilon)-A_0(0)$ and $E_1:=A_1(\epsilon)-A_1(0)$. 
	For \cref{eq:kak}, we have
	\begin{equation}\label{eq:r-est-1}
		\nm{I-B_{\rm kak}(\omega)}^2_{\row{A(\epsilon)}\to \row{A(\epsilon)}} \leq  1-\rho(\epsilon,\omega).
	\end{equation}
	Moreover, the optimal choice \cref{eq:opt-w} gives 
	\[
	%		\nm{I-B_{\rm kak}(\omega^*)}^2 \leq 1-\rho(\epsilon,\omega^*),\quad 
	\rho(\epsilon,\omega^*)= \frac{\min\left(1,\sigma_0(\epsilon)/\eta_1\right)-C_1(\epsilon)/\eta_0}{1+(1+\lfloor\log _{2}(2m)\rfloor)\delta_{\max}(A(\epsilon))+2\nm{D_\epsilon^{-1}A(\epsilon)}}.
	\]
\end{thm}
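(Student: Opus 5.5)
By \cref{lem:app-xz-kak}, it suffices to bound from below
\[
\lambda_\epsilon:=\inf_{v\in\row{A(\epsilon)}\backslash\{0\}}\frac{(\mathcal R\mathcal R^T v,v)}{(v,v)}
\]
by $\min(1,\sigma_0(\epsilon)/\eta_1)-C_1(\epsilon)/\eta_0$. Then \cref{eq:r-est-1} follows immediately. The formula for $\rho(\epsilon,\omega^*)$ follows by inserting the value $C_0(\epsilon,\omega^*)$ computed in that lemma. By \cref{eq:bmA} we have
\[
\mathcal R\mathcal R^T=A^\top(\epsilon)D_\epsilon^{-1}A(\epsilon)+R,
\]
where $R$ is the orthogonal projection onto $\appker{A(\epsilon)}$. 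So for $v\in\row{A(\epsilon)}$,
\[
(\mathcal R\mathcal R^Tv,v)=\nm{D_\epsilon^{-1/2}A(\epsilon)v}^2+\nm{Rv}^2\ge \eta_1^{-1}\nm{A_0(\epsilon)v}^2+\eta_1^{-1}\nm{A_1(\epsilon)v}^2+\nm{Rv}^2 .
\]
Split $v=v_n+v_r$ using the orthogonal decomposition $\row{A(\epsilon)}=\appker{A(\epsilon)}\oplus\row{A_0(\epsilon)}$ from \cref{lem:lambda-min-A0}. Then $Rv=v_n$ and $A_0(\epsilon)v=A_0(\epsilon)v_r$. By \cref{lem:lambda-min-A0}, $\nm{A_0(\epsilon)v_r}^2\ge\sigma_0(\epsilon)\nm{v_r}^2$. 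Dropping the nonnegative $A_1$ term, we get
\[
(\mathcal R\mathcal R^Tv,v)\ge \nm{v_n}^2+\frac{\sigma_0(\epsilon)}{\eta_1}\nm{v_r}^2\ge\min\Bigl(1,\frac{\sigma_0(\epsilon)}{\eta_1}\Bigr)\nm{v}^2 .
\]

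This argument does not use $C_1$ at all, so it already gives a bound stronger than the claimed one, since $C_1\ge0$. I expect the authors' route is different. They likely keep the full $D_\epsilon^{-1}$-weighted quantity and expand $\nm{A_1(\epsilon)v}^2$ using $A_1(\epsilon)=\Pi A_0(0)+E_1$, which produces the cross terms in $C_1(\epsilon)$ with weight $1/\eta_0$. For example, they might control $\nm{A_1(\epsilon)v_n}^2$ together with the mixed term $2\dual{A_1(\epsilon)v_n,A_1(\epsilon)v_r}$. Here $A_1(0)v_n=\Pi A_0(0)v_n=-\Pi E_0v_n$, because $A_0(\epsilon)v_n=0$. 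This is exactly where the terms $\nm{E_0}\nm{A_1(0)}\nm{\Pi}$, $2\nm{A_1(0)}\nm{E_1}$ and $\nm{E_1}^2$ come from.

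My plan is to present the clean lower bound above. As a safeguard, I would also check that any sign issue in such a mixed-term expansion is absorbed: a perturbation loss of at most $C_1(\epsilon)\nm{v}^2/\eta_0$ still yields the stated bound. The only real obstacle is confirming the identification $R_{m+1}R_{m+1}^*=R$ with $\nm{Rv}=\nm{v_n}$, and that the $D_\epsilon$-weighting satisfies $D_\epsilon^{-1}\succeq\eta_1^{-1}I$ by \cref{eq:d0-d1}. Both are immediate from the setting \cref{eq:set-kak-ssc} and \cref{lem:lambda-min-A0}.

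Finally, combining with \cref{eq:app-xz-kak} gives
\[
\nm{I-B_{\rm kak}(\omega)}^2\le1-C_0(\epsilon,\omega)\lambda_\epsilon\le1-\rho(\epsilon,\omega),
\]
and substituting $\omega^*$ from \cref{eq:opt-w} finishes the proof.
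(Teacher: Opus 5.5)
Your proof is correct, and at the key step it takes a genuinely different and cleaner route than the paper.

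Both arguments start from Lemma~\ref{lem:app-xz-kak} and the orthogonal splitting $v=v_n+v_r$ of Lemma~\ref{lem:lambda-min-A0}. The paper then expands the whole form as $\nm{\mathcal R^Tv_r}_{\bm V}^2+2(\mathcal R\mathcal R^Tv_r,v_n)+\nm{\mathcal R^Tv_n}_{\bm V}^2$. It bounds the first piece by $\sigma_0(\epsilon)\nm{v_r}^2/\eta_1$ and the last by $\nm{v_n}^2$, discarding $v_r^\top A_1^\top D_1^{-1}A_1v_r$ and $v_n^\top A_1^\top D_1^{-1}A_1v_n$ along the way. It is then left with the cross term $v_n^\top A_1^\top(\epsilon)D_1^{-1}A_1(\epsilon)v_r$. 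It estimates this by writing $A_1(\epsilon)=A_1(0)+E_1$ and $A_1(0)=\Pi A_0(0)$, and using $A_0(\epsilon)v_n=0$. That estimate is the sole source of the loss $-C_1(\epsilon)/\eta_0$.

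You split by blocks instead. The $A_0$-block only sees $v_r$, and the projection only sees $v_n$. The $A_1$-block is the perfect square $\nm{D_1^{-1/2}A_1(\epsilon)v}^2\ge0$, which is exactly twice the paper's cross term plus the two diagonal pieces it threw away. So no cross term ever has to be estimated, and you get $\inf(\mathcal R\mathcal R^Tv,v)/(v,v)\ge\min(1,\sigma_0(\epsilon)/\eta_1)$. This implies the theorem because $C_1(\epsilon)\ge0$ and $C_0(\epsilon,\omega)>0$ for admissible $\omega$.

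Your route buys three things:
\begin{itemize}
\item a sharper contraction factor $C_0(\epsilon,\omega)\min(1,\sigma_0(\epsilon)/\eta_1)$;
\item no use of item (iii) of Definition~\ref{def:nsp}, nor of $E_0,E_1,\Pi$;
\item hence no need for $C_1(\epsilon)\to0$ in Remark~\ref{rem:rho0}.
\end{itemize}
The paper's route buys nothing in exchange; its $C_1$ term is an artifact of separating a nonnegative square.

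One caveat applies equally to both proofs. Each uses $\nm{A_0(\epsilon)v_r}^2\ge\sigma_0(\epsilon)\nm{v_r}^2$ from Lemma~\ref{lem:lambda-min-A0}, but that lemma's proof as written drops the factor $\frac12$ in front of $\nm{A_0(0)v_r}^2$. For instance, $A_0(\epsilon)=[1-\epsilon,\,0]$ and $A_1(\epsilon)=[1,\,\epsilon]$ give a nearly singular $A(\epsilon)$ with $\lambda_{\min}^+(A_0^\top(\epsilon)A_0(\epsilon))=(1-\epsilon)^2<1-\epsilon^2=\sigma_0(\epsilon)$. Once $\sigma_0$ is redefined to carry that $\frac12$, your argument and the paper's both go through verbatim.
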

\begin{proof}
	According to \cref{eq:app-xz-kak}, it is sufficient to prove
	\begin{equation}\label{eq:C1}
		\inf_{v\in\row{A(\epsilon)}\backslash\{0\}}\frac{(\mathcal R\mathcal R^T v,v)}{(v,v)}\geq \min\left(1,\sigma_0(\epsilon)/\eta_1\right)-C_1(\epsilon)/\eta_0.
	\end{equation}
	From \cref{lem:lambda-min-A0}, the orthogonal complement of $\row{A_0(\epsilon)}$ in $\row{A(\epsilon)}$ is the approximate kernel $\appker{A(\epsilon)}$.
	Thus for any \( v \in \row{A(\epsilon)} \) we have the orthogonal decomposition \( v = v_n + v_r \), where \( v_n \in \appker{A(\epsilon)}\), \( v_r \in \row{A_0(\epsilon)}\) and \( \inner{v_r, v_n } = 0 \). With this, we find that $\nm{v}^2=\nm{v_r}^2+\nm{v_n}^2$ and 
	\begin{align*}\left( \mathcal{R}\mathcal{R}^{T}v, v \right)
		={}	& \left( \mathcal{R}\mathcal{R}^{T}v_r, v_r\right)+2\left(\mathcal{R} \mathcal{R}^{T}v_r, v_n\right)+\left( \mathcal{R}\mathcal{R}^{T}v_n, v_n\right)\\
		={}	& \left( \mathcal{R}^{T}v_r, \mathcal{R}^Tv_r\right)_{\bm V}+2\left(\mathcal{R} \mathcal{R}^{T}v_r, v_n\right)+\left( \mathcal{R}^{T}v_n, \mathcal R^Tv_n\right)_{\bm V}\\
		= {}	& \nm{\mathcal{R}^{T}v_r}^2_{\bm V}+2\left(\mathcal{R} \mathcal{R}^{T}v_r, v_n\right)+\nm{\mathcal{R}^{T}v_n}^2_{\bm V}.
	\end{align*} 
	It follows from 	 \cref{eq:d0-d1,lem:lambda-min-A0,eq:bmA} that
	\begin{equation}\label{eq:lb-vr}
		\begin{aligned}
			\nm{\mathcal{R}^{T}v_r}^2_{\bm V}={}&\nm{D_\epsilon^{-1}A(\epsilon)v_r}_{D_\epsilon}^2+\nm{R_{m+1}R_{m+1}^*v_r}^2\\
			={}&v_r^\top A^\top(\epsilon)D_\epsilon^{-1}A(\epsilon)v_r
			\geq{}\frac{1}{\eta_1}v_r^\top A^\top(\epsilon)A(\epsilon)v_r\\
			={}&\frac{1}{\eta_1}v_r^\top \left(A_0^\top(\epsilon)A_0(\epsilon)+A_1^\top(\epsilon)A_1(\epsilon)\right)v_r\geq\frac{\sigma_0(\epsilon)}{\eta_1}\nm{v_r}^2,
		\end{aligned}
	\end{equation}
	where we used the fact $v_r\in\row{A_0(\epsilon)}\subset\row{A(\epsilon)}$, implying $R_{m+1}^*v_r = 0$. Similarly, we have
	\begin{equation}\label{eq:lb-vn}
		\begin{aligned}
			\nm{\mathcal{R}^{T}v_n}^2_{\bm V}={}&\nm{D_\epsilon^{-1}A(\epsilon)v_n}_{D_\epsilon}^2+\nm{R_{m+1}R_{m+1}^*v_n}^2\\
			={}&\nm{D_\epsilon^{-1}A(\epsilon)v_n}_{D_\epsilon}^2+\nm{v_n}^2\geq\nm{v_n}^2.
		\end{aligned}
	\end{equation}
	The cross term reads as follows
	\[
	\begin{aligned}
		\inner{\mathcal{R}\mathcal{R}^{T}v_r, v_n}
		=v_n^\top A^\top(\epsilon)	D_\epsilon^{-1}A(\epsilon)v_r+ \inner{R_{m+1}R_{m+1}^*v_r,v_n}=v_n^\top A^\top(\epsilon)	D_\epsilon^{-1}A(\epsilon)v_r.
	\end{aligned}
	\]
	Let consider the splitting $D_\epsilon = \diag{D_0,D_1}$, where $D_0=\diag{d_1,\cdots,d_{m_0}}$ and $D_1=\diag{d_{m_0+1},\cdots,d_m}$. Then by the identity $A_0(\epsilon) v_n = 0$, it follows that
	\[
	\begin{aligned}
		{}&			\left( \mathcal{R}\mathcal{R}^{T}v_r, v_n\right) 
		=			v_n^\top A^\top_1(\epsilon)	D_1^{-1}A_1(\epsilon) v_r=v_n^\top \left(E_1+A_1(0) \right)^\top	D_1^{-1}\left(E_1+A_1(0) \right)v_r\\
		={}&v_n^\top A^\top_1(0)D_1^{-1}A_1(0) v_r
		+v_n^\top E_1^\top D_1^{-1}E_1v_r+
		v_n^\top E_1^\top D_1^{-1}A_1(0)v_r
		+v_n^\top A_1^\top(0) D_1^{-1}E_1v_r,
	\end{aligned}
	\]
	where $E_1 = A_1(\epsilon)-A_1(0)$. 
	Notice that 
	$
	\snm{v_n^\top v_r}\leq\frac{1}{2}\left(\|v_r\|^2+\|v_n\|^2\right)=\frac{1}{2}\|v\|^2$ and 
	\[
	\begin{aligned}
		{}&	 v_n^\top E_1^\top D_1^{-1}E_1v_r+
		v_n^\top E_1^\top D_1^{-1}A_1(0)v_r
		+v_n^\top A_1^\top(0) D_1^{-1}E_1v_r\\
		\geq{}&-\frac{\snm{v_n^\top v_r}}{\eta_0}\left(\nm{E_1}^2+2\nm{E_1}\nm{A_1(0)}\right)
		\geq-\frac{\nm{v}^2}{2\eta_0}\left(\nm{E_1}^2+2\nm{E_1}\nm{A_1(0)}\right).
	\end{aligned}
	\]
	On the other hand, by \cref{def:nsp}, we have $A_1(0) = \Pi A_0(0)=\Pi(E_0+A_0(\epsilon))$, with $E_0=A_0(0)-A_0(\epsilon)$. It follows directly that 
	\[
	\begin{aligned}
		v_n^\top A^\top_1(0)D_1^{-1}A_1(0) v_r=	{}&	 v_n^\top (E_0+A_0(\epsilon))^\top\Pi^\top D_1^{-1}A_1(0)v_r
		=v_n^\top E_0^\top\Pi^\top D_1^{-1}A_1(0)v_r\\
		\geq{}&-\frac{\snm{v_n^\top v_r}}{\eta_0}\nm{E_0}\nm{A_1(0)}\nm{\Pi}\geq 
		-\frac{\nm{v}^2}{2\eta_0}\nm{E_0}\nm{A_1(0)}\nm{\Pi}.
	\end{aligned}
	\] 
	Consequently, collecting the above estimates gives $	\left( \mathcal{R}\mathcal{R}^{T}v_r, v_n\right) \geq 
	-\frac{C_1(\epsilon)}{2\eta_0}\nm{v}^2$,
	%\[
	%	\left( \mathcal{R}\mathcal{R}^{T}v_r, v_n\right) \geq 
	%	-\frac{\nm{v}^2}{2\eta_0}\left(\nm{E_1}^2+2\nm{E_1}\nm{A_1(0)}+\nm{E_0}\nm{A_1(0)}\nm{\Pi}\right),
	%\]
	which, together with \cref{eq:lb-vr,eq:lb-vn}, leads to
	\begin{align*}
		{}	&	\left( \mathcal{R}\mathcal{R}^{T}v, v \right)
		=  	\nm{\mathcal{R}^{T}v_r}^2_{\bm V}+2\left(\mathcal{R} \mathcal{R}^{T}v_r, v_n\right)+\nm{\mathcal{R}^{T}v_n}^2_{\bm V}\\
		\geq{}&\frac{1}{\eta_1}\min\left(\eta_1,\sigma_0(\epsilon)\right)\nm{v}^2
		-\frac{C_1(\epsilon)}{\eta_0}\nm{v}^2
		={}\left(\min\left(1,\sigma_0(\epsilon)/\eta_1\right)-C_1(\epsilon)/\eta_0\right)\nm{v}^2,
	\end{align*} 
	for any $v\in\row{A(\epsilon)}$. Plugging this into \cref{eq:C1} yields the desired estimate and thus completes the proof.
\end{proof}
\begin{rem}\label{rem:rho0}
	Let us explain more about the robust estimates given in \cref{thm:conv_robust}. By  \cref{assum:A0-A1} and \cref{lem:lambda-min-A0}, we have
	\begin{itemize}
		\item[(i)] $C_1(\epsilon)\to0$ as $\epsilon\to0+$;		
		\item[(ii)]	$\nm{A(\epsilon)}\leq C_2<\infty$ for all $\epsilon\in[0,\bar{\epsilon}]$; 		
		\item[(iii)] $\nm{D_\epsilon^{-1}A(\epsilon)}\leq\|A(\epsilon)\|/\eta_0
		%		\leq  \nm{A(\epsilon)}/\eta_0\leq \max_{0\leq\epsilon\leq\bar{\epsilon}}\nm{A(\epsilon)}/\eta_0
		$ for all $\epsilon\in[0,\bar{\epsilon}]$; 
		\item[(iv)] $\delta_{\max}(A(\epsilon))=\lambda_{\max}(A^\top(\epsilon)D^{-1}_\epsilon A(\epsilon))
		%		=\|D_\epsilon^{-1/2}A(\epsilon)\|^2
		\leq\|A(\epsilon)\|^2/\eta_0
		%		\leq  \nm{A(\epsilon)}/\eta_0\leq \max_{0\leq\epsilon\leq\bar{\epsilon}}\nm{A(\epsilon)}/\eta_0
		$ for all $\epsilon\in[0,\bar{\epsilon}]$; 	
		\item[(v)] $\sigma_0(\epsilon)>0$ for all $\epsilon\in[0,\epsilon_1]$ and $\lim_{\epsilon\to0+}\sigma_0(\epsilon) = \lambda_{\min}^+(A^\top_0(0)A_0(0))>0$. 
	\end{itemize}
	Hence, there exists some $\epsilon_2\in(0,\bar{\epsilon}]$ such that 
	\[
	\begin{aligned}
		\sigma_0(\epsilon)\geq{}& \frac{1}{2}\lambda_{\min}^+(A^\top_0(0)A_0(0)),\quad C_1(\epsilon)\leq \frac{\eta_0}{2}\min\left(1,\sigma_0(\epsilon)/\eta_1\right),
		%		C_0(\epsilon,\omega)\geq {}&4\omega(2-\omega)\left(2+  \omega  \lfloor \log_{2}(2m) \rfloor  + 2\omega\max\limits_{0\leq\epsilon\leq\bar{\epsilon}}\nm{A(\epsilon)}/\eta_0 \right)^{-2},
	\end{aligned}
	\]
	for all $\epsilon\in[0,\epsilon_2]$.
	%	 (i) $\nm{D_\epsilon^{-1}A(\epsilon)}\leq \nm{A(\epsilon)}/\eta_0\leq \max_{0\leq\epsilon\leq\bar{\epsilon}}\nm{A(\epsilon)}/\eta_0<\infty$ for all $\epsilon\in[0,\bar{\epsilon}]$; (ii) $C_1(\epsilon)\to0$ as $\epsilon\to0+$; and (iii)  $\sigma_0(\epsilon)>0$ for all $\epsilon\in[0,\epsilon_1]$. Hence, there exists some $\epsilon_2\in(0,\bar{\epsilon}]$ such that 
	%	\[
	%	\begin{aligned}
		%		\sigma_0(\epsilon)\geq{}& \frac{1}{2}\lambda_{\min}^+(A^\top_0(0)A_0(0)),\quad C_1(\epsilon)\leq \frac{\eta_0}{2}\min\left(1,\sigma_0(\epsilon)/\eta_1\right),
		%		%		C_0(\epsilon,\omega)\geq {}&4\omega(2-\omega)\left(2+  \omega  \lfloor \log_{2}(2m) \rfloor  + 2\omega\max\limits_{0\leq\epsilon\leq\bar{\epsilon}}\nm{A(\epsilon)}/\eta_0 \right)^{-2},
		%	\end{aligned}
	%	\]
	%	for all $\epsilon\in[0,\epsilon_2]$. 
	This implies a uniformly positive lower bound for the contraction factor \cref{eq:rho}: $	0<\rho_0(\omega)\leq\rho(\epsilon,\omega)\leq 1$, for all $\epsilon\in[0,\epsilon_2]$, where 
	\begin{equation}\label{eq:rhow}
		\rho_0(\omega)=\frac{\omega(2-\omega(1+C_2^2/\eta_0))\min\left(2\eta_1,\,\lambda_{\min}^+(A^\top_0(0)A_0(0))\right)}{\eta_1\left(2+  \omega  \lfloor \log_{2}(2m) \rfloor C_2^2/\eta_0 + 2\omega
			C_2/\sqrt{\eta_0}
			%			\nm{D_\epsilon^{-1}A(\epsilon)} 
			\right)^{2}}.
	\end{equation}
	%%%%%%%%%%%%%
	%	\[
	%			C_0(\epsilon,\omega) :={}	\frac{4\omega(2-\omega(1+\delta_{\max}(A(\epsilon))))\left( 	\min\left(1,\sigma_0(\epsilon)/\eta_1\right)-C_1(\epsilon)/\eta_0\right)}{\left(2+  \omega  \lfloor \log_{2}(2m) \rfloor \delta_{\max}(A(\epsilon)) + 2\omega\nm{D_\epsilon^{-1}A(\epsilon)} \right)^{2}}.
	%	\]
	%%%%%%%%%%%%%
\end{rem}

\subsection{A geometric illustration}

Consider a simple linear system $A(\epsilon)x=b$, where  $A(\epsilon)\in\R^{2\times 2}$ is given by \cref{eq:Ae1} with $\epsilon=1/5$. Geometrically, it corresponds to finding the intersection of two very close hyperplanes \(H_i = \{ x \in\R^2: a_i^\top x = b_i \}\) (\(i = 1,2\)). The convergence behaviors of Kaczmarz \cref{eq:kacz,eq:kak} are shown in \cref{simple_zigzag}, from which we observe the heavy zigzag phenomenon of Kaczmarz and the fast convergence of \cref{eq:kak}.

\begin{figure}[H]
	\centering
	\begin{subfigure}{0.45\textwidth}
		\centering
		\includegraphics[width=\linewidth]{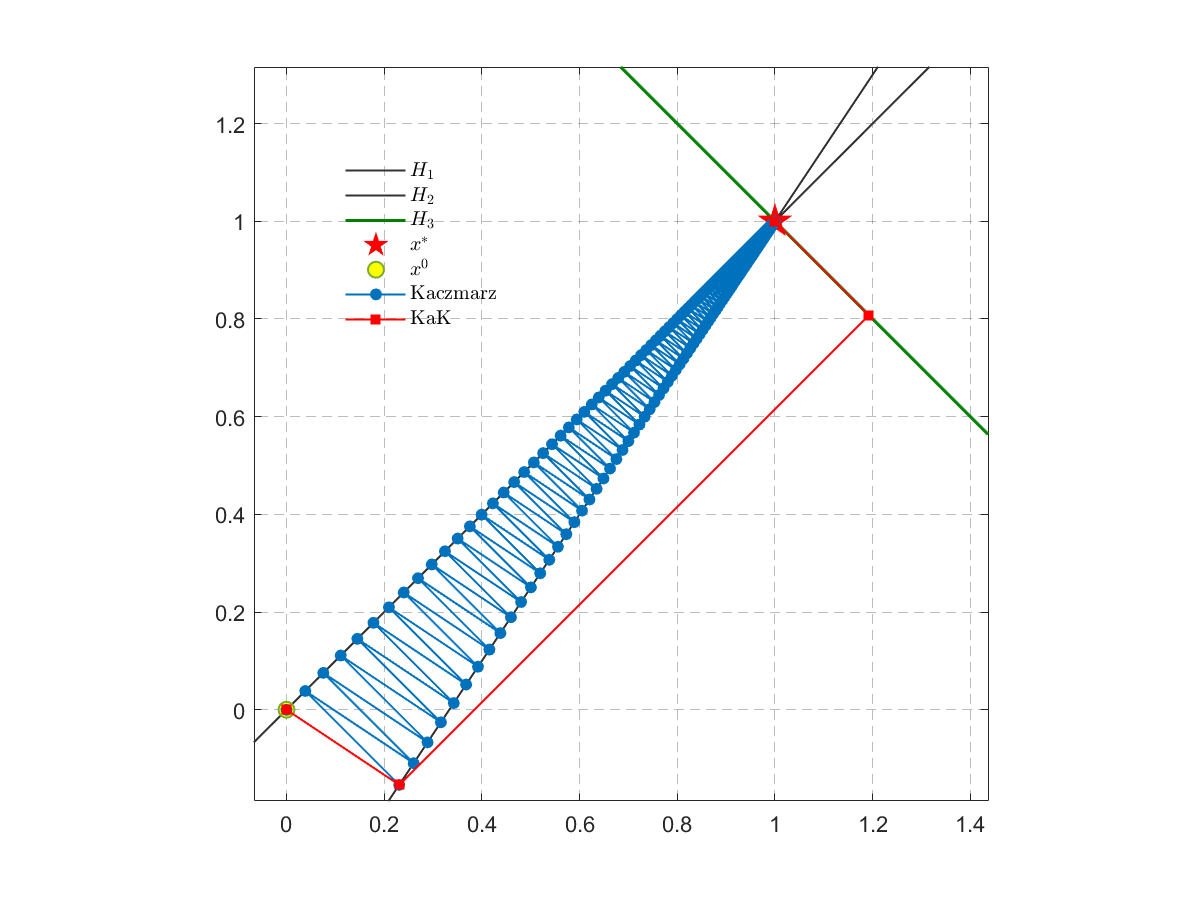}
	\end{subfigure}
	\hspace{0.3em}
	\begin{subfigure}{0.45\textwidth}
		\centering
		\includegraphics[width=\linewidth]{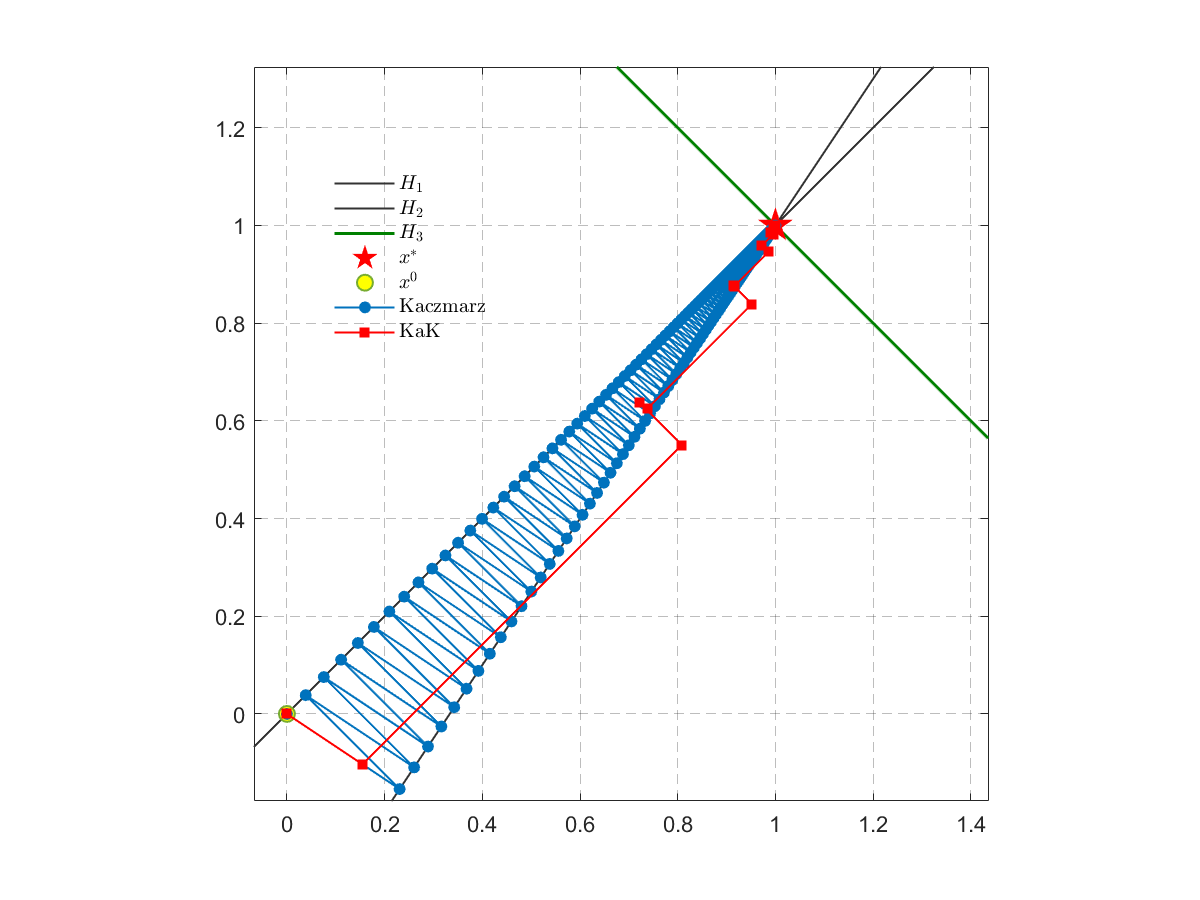}
	\end{subfigure}
	\caption{Illustrations of Kaczmarz and KaK for solving \cref{eq:Ax=b} with \cref{eq:Ae1}.  In both two pictures, we set $\omega=1$ for Kaczmarz; for KaK, $\omega$ is 1 for the left and $2/\delta_{max}(A(\epsilon))$ for the right.}
	\label{simple_zigzag}
\end{figure}
%\begin{figure}[H]
%	\centering
%	%	\begin{minipage}{0.5\textwidth}
	%		%		\centering
	%		\includegraphics[width=0.5\linewidth]{../fig/只加核的收敛行为1}
	%		\caption{Iterative illustrations of Kaczmarz \cref{eq:kacz,eq:kak}  for solving a simple linear system $A(\epsilon)x=b$, where $\epsilon=1/5$ and $A(\epsilon)\in\R^{2\times 2}$ is given by \cref{eq:Ae1}. \LH{Take $\omega\in(0,2/\delta_{\max}(A(\epsilon)))$. }\LH{KAK$\to$KaK. }\LH{Set the line of KaK in red. }\LH{Kaczmarz(with $\omega)\to$ Kaczmarz}}
	%		\label{simple_zigzag}
	%		%	\end{minipage}
%\end{figure}

For this simple but illustrative case, we can give an interesting geometrical explanation. By \cref{eg:ex1}, we have 
%$	\dappker{A(\epsilon)}=\myspan\{[1,-1]^\top\}$ and 
$\appker{A(\epsilon)}=\myspan\{\xi\}$ with $\xi = [1,1]^\top$. The extra step in \cref{eq:kak} reads as 
\[\small
\begin{aligned}
	x_{k+1}={}&v_{k,4}=v_{k,3}+\frac{\xi \xi^\top}{\|\xi\|^2}(x_{LS}-v_{k,3})
	=\mathop{\argmin}_{x\in\R^2}\left\{\frac{1}{2}\nm{x-v_{k,3}}^2:\,\,\xi^\top x=\xi^\top x_{LS}\right\}.
\end{aligned}
\]
Therefore, in each step of \cref{eq:kak}, the update of $x_{k+1}$ is nothing but the projection ($\omega=1$) or inexact projection ($\omega\neq 1$) of the output of Kaczmarz onto the hyperplane $H_3=\left\{x\in\R^2:\,\xi^\top x=\xi^\top x_{LS}\right\}$; see \cref{simple_zigzag}.

\section{Robust Coordinate Descent and its Acceleration}
\label{sec:rcd-acc}
In the last section, our \cref{eq:kak} demonstrates its effectiveness for nearly singular systems. However, it involves the priori information $x_{LS}=A^+b$. To save this limitation, we transform it into a CD  for solving the dual problem \cref{eq:dual-prob},
%\begin{equation}
%%	\label{eq:dual_problem}
%	\min_{y \in \mathbb{R}^m} g(y) = \frac{1}{2} \| A^\top(\epsilon) y \|^2 + b^\top y,
%\end{equation}
which is equivalent to
\begin{equation}\label{eq:AA'y-b}
	A(\epsilon)A^\top(\epsilon)y=-b.
\end{equation}
This is called the dual linear system, as a comparison with the primal one \cref{eq:Ax=b}.

As we all know, under the transformation $x_k = -A^\top(\epsilon) y_k$, the coordinate descent \cref{eq:cd-intro} for solving the dual problem \cref{eq:dual-prob} is equivalent to the classical Kaczmarz iteration \cref{eq:kacz} for the primal linear system \cref{eq:Ax=b}. Observing that Kaczmarz itself can be recovered from an SSC presentation (cf.\cref{eq:set-kacz-ssc}), it is natural to reformulate the coordinate descent \cref{eq:cd-intro} as an SSC form for solving the dual linear system \cref{eq:AA'y-b}. This motivates us developing and analyzing the corresponding robust coordinate descent methods via the SSC framework.
%Here, recall the standard CD iteration
%\begin{equation}
%\label{eq:cd_update}
%	y_{k+1} = y_k - \frac{1}{L_i} U_i \nabla g_i (y_k),
%\end{equation}
%where \(L_i\) is the Lipschitz constant, \(U_i\) the \(i\)-th coordinate unit vector, and \(\nabla g_i\) the gradient component. Substituting \(\nabla g(y) = A(\epsilon)A^\top(\epsilon) y + b\) gives:
%\begin{equation}
%	\label{eq:cd_specific}
%	y_{k+1} = y_k - \frac{1}{L_i} U_i U_i^\top (A(\epsilon)A^\top(\epsilon) y_k + b).
%\end{equation}
%Multiplying by \(-A^\top\) recovers the Kaczmarz iteration \cref{class_kacz}, establishing their equivalence. Introducing a relaxation parameter $\omega$  to 	\cref{eq:cd_specific} gives 
%\begin{equation}
%	\label{eq:cd_omega}
%	y_{k+1} = y_k - \omega\frac{1}{L_i} U_i U_i^\top (A(\epsilon)A(\epsilon)^\top y_k + b).
%\end{equation}
%where $L_i=\nm{A_{(i)}(\epsilon)}^2$. 
\subsection{Kernel-augmented CD}

Following the idea from \cref{eq:kak}, we consider the the following SSC setting:
\begin{equation}\label{eq:set-kacd-ssc}
	\renewcommand{\arraystretch}{1.2}
	\begin{array}{l}
		\textit{{\footnotesize$\bullet$}\,\, $V:=  \mathbb{R}^{m}$ with the Euclidean inner product $(x,y) :=x^\top y $;}\\
		\textit{{\footnotesize$\bullet$}\,\, $A:=A(\epsilon)A^\top(\epsilon) :V\to V$ and $f:= -b$;}\\		
		\text{{\footnotesize$\bullet$}\,\, $V= \sum_{i=1}^{m+1}V_i$ with $V_i:=\R$ for $1\leq i\leq m$ and $V_{m+1}:=\dappker{A(\epsilon)}$;}\\
		\text{{\footnotesize$\bullet$}\,\,  $R_i:=U_i:V_i\to V$ for $1\leq i\leq m$ and $R_{m+1} := \iota:V_{m+1}\to V$;}\\		
		\textit{{\footnotesize$\bullet$}\,\, $a_i(x_i,y_i)=d_ix_iy_i$ with $d_i=\nm{A_{(i)}(\epsilon)}^2>0$ for $1\leq i\leq m$};\\
		\textit{{\footnotesize$\bullet$}\,\,the inner product on $V_{m+1}$ is defined by $a_{m+1}(x,y) = x^\top A(\epsilon)A^\top(\epsilon) y$}.
	\end{array}
\end{equation}
%For notation simplicity, in the remainder of this section, we abbreviate $A(\epsilon)$ as $A$. Notice that the equivalence between 	\cref{eq:cd_omega} to \cref	{eq:kacz} under the transformation $x_k=-A^\top(\epsilon) y_k$ is important. Recall the approximate dual kernel in \cref{eq:app-dual-ker}:
%	\begin{equation*}
	%		\dappker{A(\epsilon)}:= \left\lbrace   y\in \R^{m}:A_{0}(\epsilon)A^{\top}(\epsilon)y=0\right\rbrace.
	%\end{equation*}
	%Letting $S_\epsilon = (\bar{w_1},\cdots,\bar{w_{M}})$, then $A^\top(\epsilon) S_{\epsilon}$ constitutes a basis for $\dappker{A(\epsilon)}$. Based on the robust SSC for the Kaczmarz method, we get the Kernel-Augmented CD method (KACD) naturally: 

	With this, it is easy to find $T_i = R_i^*A(\epsilon)A^\top(\epsilon) =   U_iA(\epsilon)A^\top(\epsilon)/d_i $ for $1\leq i \leq m$ and $ R_{m+1}^{*}:=\widehat{R}:\R^m\to \dappker{A(\epsilon)}$ is the orthogonal projection. This leads to an alternate robust SSC: given $y_k\in \R^m$, set $u_{k,1} = y_k$ and update $y_{k+1}=u_{k,m+2}$ by that
	\begin{equation}\label{eq:kacd}
		\tag{KaCD}
		\left\{
		\begin{aligned}
			{}&		u_{k,i+1} = 
			%		u_{k,i} +\omega R_iR_i^*(-A(\epsilon)A^\top(\epsilon) u_{k,i} -b)=
			u_{k,i} - \omega\frac{U_iU_i^\top}{\| A_{(i)}(\epsilon)\|^{2}} (A(\epsilon)A^\top(\epsilon) u_{k,i} +b),\quad\,1\leq i\leq m,\\
			{}&		u_{k,m+2}=u_{k,m+1}-\omega \widehat{R}(A(\epsilon)A^\top(\epsilon) u_{k,i} +b),
		\end{aligned}
		\right.
	\end{equation}
	which is called the {\it Kernel-augmented Coordinate Descent} (KaCD).
	%\[
	% \mathcal R^T=\mathcal R^\star =  D^{-1}A(\epsilon),\,\bm D = \diag{D^{-1}A(\epsilon)A^\top(\epsilon)},\, \bm U = \triu{D^{-1}A(\epsilon)A^\top(\epsilon)},
	%\]
	Note that we have the stationary iteration form
	\begin{equation}
		\label{eq:iter-Bssc-kacd}
		y_{k+1} = y_k - B_{\rm kacd}(\omega)(A(\epsilon)A^\top(\epsilon) y_k +b),
	\end{equation}
	where $B_{\rm kacd}(\omega):\R^m\to \R^m$ satisfies 
	\[
	I-B_{\rm kacd}(\omega) =(I-\omega R_{m+1}T_{m+1})(I-\omega R_mT_m)\cdots(I-\omega R_1T_1).
	\]
	%\begin{algorithm}[H]
	%		\caption{$y_{k+1} = \texttt{KaCD}(y_k,\omega,A,b,R)$}
	%	\label{algo:kacd}
	%	\begin{algorithmic}[1] 
		%		\REQUIRE  The relaxation parameter $	\omega\in(0,2)$ and the current iterate $y_k\in\R^m$.\\
		%		~~~~~~~The settings $A\in\R^{m\times n}$ and $b\in\R^m$ of the dual problem \cref{eq:dual_problem}.\\
		%~~~~~~~The orthogonal projection: $R:\R^m\to\dappker{A(\epsilon)}$.		
		%%		\FOR{$k=0,1,\cdots,K$}
		%		\STATE Set $v=		 y_k$.
		%		\FOR{$i=1,\cdots,m$}
		%		\STATE $v= v - \omega\frac{U_iU_i^\top  }{d_i} (A(\epsilon)A^\top(\epsilon) v+b)$.
		%		\ENDFOR 
		%		\STATE  $y_{k+1}=v-\omega R(A(\epsilon)A^\top(\epsilon) v+b)$. 
		%%		\ENDFOR
		%%\ENSURE 
		%	\end{algorithmic}
	%\end{algorithm}
	
	According to \cref{sec:sym-ssc}, we also have the symmetrized version of \cref{eq:kacd}:
	\begin{equation}\label{eq:symkacd}
		\tag{SymKaCD}
		z_{k+1} = z_{k} -\bar {B}_{\rm kacd}(\omega) \left(A(\epsilon)A^{\top}(\epsilon) z_{k}+b\right),
	\end{equation}
	where $ \bar {B}_{\rm kacd}(\omega)=B_{\rm kacd}(\omega)+B'_{\rm kacd}(\omega)-B'_{\rm kacd}(\omega)A(\epsilon)A^\top(\epsilon) B_{\rm kacd}(\omega)$ is the symmetrized iterator. A detailed one step implementable version of \cref{eq:symkacd} is given in \cref{algo:symkacd}, and the convergence analysis is summarized in \cref{thm:conv-kacd}, which provides robust convergence rates of since by \cref{rem:rho0} there exist $\rho_0(\omega)>0$ and $\epsilon_2\in(0,\bar{\epsilon}]$ such that $0<\rho_0(\omega)\leq \rho(\epsilon,\omega)\leq 1$ for all $\epsilon\in[0,\epsilon_2]$.
	\begin{algorithm}[H]
		\caption{$z_{k+1} = \texttt{SymKaCD}(z_k,\omega,A(\epsilon),b,\widehat{R})$}
		\label{algo:symkacd}
		\begin{algorithmic}[1] 
			\REQUIRE  $	\omega\in(0,2/(1+\delta_{\max}(A(\epsilon))))$ and $z_k\in\R^m$.\\
			%			~~~~~~~~~The setting of the dual problem \cref{eq:dual_problem}: $A(\epsilon)\in\R^{m\times n}$ and $b\in\R^m$.\\
			~~~~~~~~~The orthogonal projection: $\widehat{R}:\R^m\to\dappker{A(\epsilon)}$.		
			%		\FOR{$k=0,1,\cdots,K$}
			\STATE Set $v=		 z_k$.
			\FOR{$i=1,\cdots,m$}
			\STATE $v= v - \omega\frac{U_iU_i^\top  }{d_i} (A(\epsilon)A^\top(\epsilon) v+b)$.
			\ENDFOR 
			\STATE  $v=v-\omega \widehat{R}(A(\epsilon)A^\top(\epsilon) v+b)$. 
			\STATE  $v=v-\omega \widehat{R}(A(\epsilon)A^\top(\epsilon) v+b)$. 		
			\FOR{$i=m,\cdots,1$}
			\STATE $v= v - \omega\frac{U_iU_i^\top  }{d_i} (A(\epsilon)A^\top(\epsilon) v+b)$.
			\ENDFOR 
			\STATE  $z_{k+1}=v$. 		
		\end{algorithmic}
	\end{algorithm}
	%\begin{algorithm}[H]
	%	\caption{A symmetric Kernel-Augmented CD method for solving \cref{eq:dual_problem}}
	%	\label{algo:kacd-sym}
	%	\begin{algorithmic}[1] 
		%		\REQUIRE  Approximate dual kernel space: $\bar{W}_\epsilon={\rm span}\{\bar{w}_j\}_{j=1}^M$.\\		
		%		~~~~~~~Initial values: $	y_0\in\R^m$.\\		
		%		~~~~~~~Maximum iterations: $ K\in\mathbb N$.	\\
		%		\FOR{$k=0,1,\cdots,K$}
		%		\STATE Set $y_{0}=v$.
		%		\STATE Update $v=  v- S_\epsilon\left(S_\epsilon^\top{A(\epsilon)A^\top(\epsilon)} S_\epsilon \right)^{-1}S_\epsilon^\top (b+A(\epsilon)A^\top(\epsilon) v)$. 		
		%		\FOR{$i=1,\cdots,m$}
		%		\STATE Set $y_{k+1}^{1}=v$
		%		\STATE Update $v=  v - \frac{U_iU_i^\top  }{d_i} (b+A(\epsilon)A^\top(\epsilon) v)$.
		%		\ENDFOR
		%		\
		%		\STATE Update $v= v - S_\epsilon\left(S_\epsilon^\top {A(\epsilon)A^\top(\epsilon)}S_\epsilon \right)^{-1}S_\epsilon^\top (b+A(\epsilon)A^\top(\epsilon) v)$.
		%		\STATE $y_{k+1}=v.$ 
		%		\ENDFOR
		%	\end{algorithmic}
	%\end{algorithm}
	
	\begin{thm}\label{thm:conv-kacd}
		Assume that $A(\epsilon)$  has full of row rank and the relaxation parameter satisfies $	\omega\in(0,2/(1+\delta_{\max}(A(\epsilon))))$. Let $\{x_k\}_{k\geq0}$ be generated by \cref{eq:kak} and $\{y_k\}_{k\geq0}$ by \cref{eq:kacd} with $x_0=-A^\top(\epsilon) y_0$. Then we have the following.
		\begin{itemize}
			\item[(i)] \cref{eq:kacd} with the setting \cref{eq:set-kacd-ssc} is equivalent to \cref{eq:kak} with the setting \cref{eq:set-kak-ssc} in the sense that $x_k=-A^\top(\epsilon) y_k$ for all $k\geq0$.
			\item [(ii)]  $	
			B_{\rm kak}(\omega) = {}A^\top(\epsilon)  B_{\rm kacd}(\omega)A(\epsilon)
			$ and 
			\begin{equation}\label{eq:nm-iter-kacd}\small
				\begin{aligned}
					{}&	\nm{I-\bar{B}_{\rm kacd}A(\epsilon)A^\top(\epsilon)}_{A(\epsilon)A^\top(\epsilon)}=	\nm{I-B_{\rm kacd}(\omega)A(\epsilon)A^\top(\epsilon)}_{A(\epsilon)A^\top(\epsilon)}^2\\
					={}&		\nm{I-		B_{\rm kak}(\omega) }^2_{\row{A(\epsilon)}\to\row{A(\epsilon)}}\leq 1-\rho(\epsilon,\omega),
				\end{aligned}
			\end{equation}
			where $\rho(\epsilon,\omega)$ is defined by \cref{eq:rho}. 
			\item[(iii)] Moreover, for \cref{eq:kacd,eq:symkacd}, we have
			\[
			\begin{aligned}
				\nm{z_k-y^*}_{A(\epsilon)A^\top(\epsilon)}
				\leq {}& \left(1-\rho(\epsilon,\omega)\right) ^k\nm{z_0-y^*}_{A(\epsilon)A^\top(\epsilon)},\\
				\nm{x_k-x_{LS}}^2=	\nm{y_k-y^*}_{A(\epsilon)A^\top(\epsilon)}^2
				\leq {}& \left(1-\rho(\epsilon,\omega)\right) ^k\nm{y_0-y^*}_{A(\epsilon)A^\top(\epsilon)}^2,
			\end{aligned}
			\]
			where $y^*=-(A(\epsilon)A^\top(\epsilon))^{-1}b$ is the unique solution to the linear system \cref{eq:AA'y-b}.
		\end{itemize}
	\end{thm}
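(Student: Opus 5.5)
The plan is to prove (i) by induction on the inner sweep, transporting each substep of \cref{eq:kacd} to the corresponding substep of \cref{eq:kak} through the map $y\mapsto -A^\top(\epsilon)y$. Write $A=A(\epsilon)$ for brevity. If $v_{k,i}=-A^\top u_{k,i}$, then the residual satisfies $U_i^\top(AA^\top u_{k,i}+b)=b_i-A_{(i)}v_{k,i}$. Multiplying the $i$-th row update of \cref{eq:kacd} by $-A^\top$ therefore gives
\[
-A^\top u_{k,i+1}=v_{k,i}+\omega\frac{b_i-A_{(i)}v_{k,i}}{\nm{A_{(i)}}^2}A_{(i)}^\top=v_{k,i+1}.
\]
For the kernel step I will use full row rank. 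Since $b=Ax_{LS}$, we have $AA^\top u+b=-A(v-x_{LS})$ with $v=-A^\top u$. I then need the intertwining identity $A^\top\widehat R A=R$ on $\row{A}$, where $R$ is the orthogonal projection of $\row{A}$ onto $\appker{A}$ in the Euclidean inner product.

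The intertwining identity is the main obstacle. A quick check first: the inner product $a_{m+1}(x,y)=x^\top AA^\top y$ on $V_{m+1}=\dappker{A}$ makes $R_{m+1}^*$ the adjoint of the inclusion with respect to $a_{m+1}$. The paper calls $\widehat R=R_{m+1}^*$ an ``orthogonal projection,'' but with this inner product the operator $\iota R_{m+1}^*=\widehat R$ is $(AA^\top)^{-1}$ times the $AA^\top$-orthogonal projection onto $\dappker{A}$. Concretely, with a basis $S$ of $\dappker{A}$, I expect $\widehat R=S(S^\top AA^\top S)^{-1}S^\top$. Then $A^\top\widehat RA=(A^\top S)\bigl((A^\top S)^\top(A^\top S)\bigr)^{-1}(A^\top S)^\top$, which is the Euclidean orthogonal projection onto $A^\top\dappker{A}=\appker{A}$. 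Here full row rank guarantees that $A^\top S$ has full column rank. This gives $-A^\top u_{k,m+2}=v_{k,m+1}+\omega R(x_{LS}-v_{k,m+1})=v_{k,m+2}$, completing (i). The same computation done at the operator level yields $A^\top R_iR_i^*A=\omega^{-1}\cdot$(the $i$-th Kaczmarz correction), i.e. $A^\top U_iU_i^\top A/d_i=R_iT_i$ in the \cref{eq:set-kak-ssc} setting. Multiplying out the products and using $A^\top(I-\omega R_jT_j^{\rm cd})=(I-\omega R_jT_j^{\rm kak})A^\top$ gives
\[
A^\top(I-B_{\rm kacd}AA^\top)=(I-B_{\rm kak})A^\top ,
\]
and hence $B_{\rm kak}=A^\top B_{\rm kacd}A$ on $\row{A}$.

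For the norm identity in (ii), I will observe that $y\mapsto -A^\top y$ is an isometry from $(\R^m,\nm{\cdot}_{AA^\top})$ onto $(\row{A},\nm{\cdot})$, since $\nm{A^\top y}^2=\nm{y}_{AA^\top}^2$ and, by full row rank, $A^\top$ is a bijection $\R^m\to\row{A}$. Conjugating by this isometry and using the intertwining relation gives
\[
\nm{I-B_{\rm kacd}AA^\top}_{AA^\top}=\nm{I-B_{\rm kak}}_{\row{A}\to\row{A}}.
\]
The remaining equalities follow from \cref{thm:xz-id} applied to the SSC setting \cref{eq:set-kacd-ssc}. For that I will check that \cref{assum:TjRj} holds for $\omega$ in the stated range. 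This is immediate because the block diagonal $\bm D$ is unitarily equivalent, via the isometry, to the one in \cref{lem:app-xz-kak}, so its norm is again at most $1+\delta_{\max}(A(\epsilon))$. The bound by $1-\rho(\epsilon,\omega)$ is then \cref{thm:conv_robust}.

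For (iii), I use the fact that $y^*$ is a fixed point of both iterations. This gives the error recursions
\[
y_{k+1}-y^*=(I-B_{\rm kacd}AA^\top)(y_k-y^*),\qquad z_{k+1}-y^*=(I-\bar B_{\rm kacd}AA^\top)(z_k-y^*).
\]
Iterating these and inserting \cref{eq:nm-iter-kacd} gives the two estimates: the squared bound for $y_k$, and the unsquared bound for $z_k$, since the symmetrized norm is already the square. Finally, $\nm{x_k-x_{LS}}=\nm{A^\top(y_k-y^*)}=\nm{y_k-y^*}_{AA^\top}$, using (i) and $x_{LS}=-A^\top y^*$.
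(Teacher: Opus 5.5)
Your proposal is correct and follows essentially the same route as the paper. It transports each substep of KaCD to KaK via $y\mapsto -A^\top(\epsilon)y$, proves $R=A^\top(\epsilon)\widehat R A(\epsilon)$ using a basis $S$ of $\dappker{A(\epsilon)}$ with $W=A^\top(\epsilon)S$, transfers the norm through the isometry $A^\top(\epsilon):(\R^m,\nm{\cdot}_{A(\epsilon)A^\top(\epsilon)})\to(\row{A(\epsilon)},\nm{\cdot})$, and finishes with the XZ identity and \cref{thm:conv_robust}. Your factor-by-factor intertwining for $B_{\rm kak}(\omega)=A^\top(\epsilon)B_{\rm kacd}(\omega)A(\epsilon)$ and your remark that $\widehat R=S(S^\top A(\epsilon)A^\top(\epsilon)S)^{-1}S^\top$ is not literally an orthogonal projection (it is the $A(\epsilon)A^\top(\epsilon)$-orthogonal projection composed on the right with $(A(\epsilon)A^\top(\epsilon))^{-1}$) are useful clarifications of steps the paper states tersely.
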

	
	\begin{proof}
		Let us first show the relation $R = A^\top(\epsilon)\widehat{R}A(\epsilon)$, where $R:\row{A(\epsilon)}\to\appker{A(\epsilon)}$ and $\widehat{R}:\R^m\to\dappker{A(\epsilon)}$ are orthogonal projections. 
		By \cref{def:ak}, $\appker{A(\epsilon)}$ is the image of $	\dappker{A(\epsilon)}$ under the linear mapping $A^\top(\epsilon):\R^m\to\R^n$. Since \( A(\epsilon) \) has full row rank, 
		it follows that \( \dim \appker{A(\epsilon)} = \dim 		\dappker{A(\epsilon)}=r_1\). Let the columns of $S\in\R^{m\times r_1}$ form  a basis of $\dappker{A(\epsilon)}$, then we have  $\widehat{R}=S\left(S^\top A(\epsilon) A^\top(\epsilon) S\right)^{-1}S^\top$. Note that the columns of $W = A^\top(\epsilon) S\in\R^{n\times r_1}$ form a basis of $\appker{A(\epsilon)} $, and thus 
		\[
		R = W\left(W^\top W\right)^{-1}W^\top = A^\top(\epsilon) S\left(S^\top A(\epsilon) A^\top(\epsilon) S\right)^{-1}S^\top A(\epsilon)=A^\top(\epsilon)\widehat{R}A(\epsilon).
		\] 
		In the following, let us verify (i)-(iii) one by one. 
		
		Suppose that $x_k = -A^\top(\epsilon)y_k$, then by \cref{eq:kacd}, we have
		\[
		\left\{
		\begin{aligned}
			-A^\top(\epsilon) u_{k,1} ={}& -A^\top(\epsilon) y_k = x_k=v_{k,1},\\
			-A^\top(\epsilon)	u_{k,i+1} = {}& 
			-A^\top(\epsilon) u_{k,i} -  \omega A^\top(\epsilon)\frac{U_iU_i^\top}{\| A_{(i)}(\epsilon)\|^{2}} (-A(\epsilon)A^\top(\epsilon) u_{k,i} -b)\\
			={}&	-A^\top(\epsilon) u_{k,i} +\omega A_{(i)}^\top(\epsilon)\frac{b_{i} +A_{(i)}(\epsilon) A^\top(\epsilon) u_{k,i} }{\| A_{(i)}(\epsilon)\|^{2}} ,
		\end{aligned}
		\right.
		\]
		for all $1\leq i\leq m$. Compared with  \cref{eq:kak}, we conclude immediately that $v_{k,i} =-A^\top(\epsilon)u_{k,i} $ for all $2\leq i\leq m+1$. Thus, using \cref{eq:kacd} again, we obtain (noticing that $Ax_{LS}=b$)
		\[
		\begin{aligned}
			{}&		-A^\top(\epsilon) u_{k,m+2}=-A^\top(\epsilon) u_{k,m+1}+\omega A^\top(\epsilon)\widehat{R}(A(\epsilon)A^\top(\epsilon) u_{k,m+1} +b)\\
			={}&v_{k,m+1}+\omega A^\top(\epsilon)\widehat{R}A(\epsilon)(x_{LS}- v_{k,m+1} )=v_{k,m+1}+\omega R(x_{LS}- v_{k,m+1} ),
		\end{aligned}
		\]
		which implies $x_{k+1}=v_{k,m+2}=	-A^\top(\epsilon) u_{k,m+2}=-A^\top(\epsilon) y_{k+1}$. This verifies (i). 
		
		In view of \cref{eq:iter-Bssc-kacz,eq:iter-Bssc-kacd}, it is easy to obtain $	
		B_{\rm kak}(\omega) = {}A^\top(\epsilon)  B_{\rm kacd}(\omega)A(\epsilon)
		$. Thanks to the \cref{eq:xz-id}, we claim that 
		\[
		\begin{aligned}
			\nm{I-\bar{B}_{\rm kacd}A(\epsilon)A^\top(\epsilon)}_{A(\epsilon)A^\top(\epsilon)}={}&\nm{I-B_{\rm kacd}(\omega)A(\epsilon)A^\top(\epsilon)}_{A(\epsilon)A^\top(\epsilon)}^2,
		\end{aligned}
		\]
		and it follows from \cref{thm:conv_robust} that
		\[\small
		\begin{aligned}
			{}&	\nm{I-B_{\rm kacd}(\omega)A(\epsilon)A^\top(\epsilon)}_{A(\epsilon)A^\top(\epsilon)}^2=\sup_{u\in\R^m}\frac{\nm{(I-B_{\rm kacd}(\omega)A(\epsilon)A^\top(\epsilon))u}_{A(\epsilon)A^\top(\epsilon)}^2 }{\nm{u}^2_{A(\epsilon)A^\top(\epsilon)}}\\	
			={}&\sup_{u\in\R^m}\frac{\nm{(I-A^\top(\epsilon)B_{\rm kacd}(\omega)A(\epsilon))A^\top(\epsilon)u}^2}{\nm{A^\top(\epsilon)u}^2}
			={}\sup_{v\in\row{A(\epsilon)}}\frac{\nm{(I-A^\top(\epsilon)B_{\rm kacd}(\omega)A(\epsilon))v}^2}{\nm{v}^2}\\		
			={}&\sup_{v\in\row{A(\epsilon)}}\frac{\nm{(I-B_{\rm kak}(\omega))v}^2}{\nm{v}^2}
			={}	\nm{I-		B_{\rm kak}(\omega) }^2_{\row{A(\epsilon)}\to \row{A(\epsilon)}}\leq 1-\rho(\epsilon,\omega),
		\end{aligned}
		\]
		which proves \cref{eq:nm-iter-kacd} and verifies (ii). This together with the stationary iteration forms \cref{eq:iter-Bssc-kacd,eq:symkacd} leads to (iii) and concludes the proof.
	\end{proof}

	%\iffalse
	%\begin{table}[htbp]
	%	\caption{When $A$ is full of row rank, the equivalence between Kacz for the primal problem and CD for the dual problem.}
	%	\begin{tabular}{c c c} % 表格定义
		%		\toprule
		%		\centering
		%		
		%		Kacz for $Ax=b$ & & CD for $min g(y)$ \\
		%		\midrule
		%		SSC & $\xleftrightarrow{}$ & SSC \\
		%		robust SSC: $W_\epsilon = V_{m+1}$ & $\xleftrightarrow{}$ & robust SSC: $\bar W_\epsilon = V_{m+1}$ \\
		%		robust SSC: $V_{m+j} = {\rm span}\{w_j=A^\top(\epsilon) \bar w_j\}$ & $\xleftrightarrow{}$ & robust SSC: $V_{m+j} = {\rm span}\{\bar w_j\}$ \\
		%		\bottomrule
		%	\end{tabular}
	%\end{table}\fi
	
	\subsection{Acceleration}
	\label{sec:AccK}
	We now consider an accelerated variant of \cref{algo:symkacd} for the dual problem \cref{eq:dual-prob}. The basic idea is to combine \cref{algo:symkacd}  with a predictor-corrector scheme  \cite[Eq.(83)]{Luo2022}. The resulted method is called the {\it Kernel-augmented Accelerated Coordinate Descent} (KaACD) and has been presented in \cref{algo:kaacd}.
		\begin{algorithm}[H] 
			\caption{KaACD for solving \cref{eq:dual-prob}}
			\label{algo:kaacd}
			\begin{algorithmic}[1] 
				\REQUIRE   The relaxation parameter $	\omega\in(0,2/(1+\delta_{\max}(A(\epsilon))))$.\\
				~~~~~~~~~Initial values: $	y_0,\,v_0\in\R^m,\,\gamma_0>0$.\\
				~~~~~~~~~The orthogonal projection: $\widehat{R}:\R^m\to\dappker{A(\epsilon)}$.		\\							
				%			~~~~~~~The setting of the dual problem \cref{eq:dual_problem}: $A(\epsilon)\in\R^{m\times n}$ and $b\in\R^m$.\\
				%		\FOR{$k=0,1,\cdots,K$}
				%		\REQUIRE  Approximate dual kernel space: $\bar{W}_\epsilon={\rm span}\{\bar{w}_j\}_{j=1}^M$.\\		
				~~~~~~~~~Convexity parameter: $	\rho \in[0,\rho(\epsilon,\omega)]$, where $\rho(\epsilon,\omega)$ is defined by  \cref{eq:rho}.\\		
				\FOR{$k=0,1,\cdots$}
				\STATE Compute $\alpha_k=(\gamma_k+\sqrt{\gamma_k^2+4\gamma_k})/2$.\label{algo:ak}
				\STATE Update $\gamma_{k+1}=(\gamma_k+\rho \alpha_k)/(1+\alpha_k)$.
				\STATE Compute $z_k = (y_k+\alpha_kv_k)/(1+\alpha_k)$.\label{algo:zk}
				\STATE Update $z_{k+1} = \texttt{SymKaCD}(z_k,\omega,A(\epsilon),b,\widehat{R})\,\,\left(\text{call \cref{algo:symkacd}}\right)$.
				%		\STATE Set $z = z_k$.
				%		\STATE 	$z = z + \bar {B}_{\rm kacd} \left(-b - AA^{\top}z\right)$
				%		\STATE Update $z_{k+1}=  z$. 
				\STATE Update $v_{k+1} = (\gamma_kv_k+\rho \alpha_k z_k+\alpha_k(z_{k+1}-z_k))/(\gamma_k+\rho \alpha_k)$.\label{algo:vk1}
				\STATE Update $y_{k+1} = (y_k+\alpha_k v_{k+1})/(1+\alpha_k)$.\label{algo:yk1}
				\ENDFOR						
			\end{algorithmic}
		\end{algorithm}

		Notably, thanks to \cref{eq:symkacd}, we have 
		\begin{equation}\label{eq:zk1}
			z_{k+1} = z_{k} -\bar {B}_{\rm kacd}(\omega) \left(A(\epsilon)A^{\top}(\epsilon) z_{k}+b\right)=z_{k}-\bar {B}_{\rm kacd}(\omega)\nabla g(z_k),
		\end{equation}
		which is actually a preconditioned gradient descent step. This motivates us treating \cref{algo:kaacd} as a preconditioned accelerated gradient method. Moreover, by \cref{coro2.1}, \cref{rem:rho0}, and \cref{thm:conv-kacd}, we have
		%In this formulation, the linear operator $B$ corresponds precisely to the matrix $\bar{B}_{\mathrm{kacd}}$ constructed in Algorithm~\ref{algo:kaacd}, and we have $\bar {B}_{\rm kacd}=B=B'$. Thanks to \cref{thm:conv_robust}, we see that
		%\begin{equation*}
		%	\nm{I-B_{\rm rkacz}}^2 = 		\nm{I-\bar{B}_{\rm  rkacz}}\leq1-	\sigma,
		%\end{equation*} where $	\sigma = \frac{\lambda_{\min }\left(A_{0}^{\top}(0) D_{0}^{-1} A_{0}(0)\right)+C \epsilon}{1+\left\lfloor\log _{2}(2 m)\right\rfloor+2 \sqrt{d_{1} / d_{0}}}$. Hence, according to \cref{coro2.1}, we have
		%\iffalse $B=\bar{B}_{\rm kacd} = \tilde{B}A(\epsilon)A^\top(\epsilon)$  and $B=B'$.  Then
		%\[(Bu,v)_{A(\epsilon)A^\top(\epsilon)}=(u,Bv)_{A(\epsilon)A^\top(\epsilon)},\quad\forall u,v\in \mathbb{R}^m \]which means
		%\[v^\top A(\epsilon)A^\top(\epsilon)\tilde{B}A(\epsilon)A^\top(\epsilon) u = v^\top A(\epsilon)A^\top(\epsilon)\tilde{B}^\top A(\epsilon)A^\top(\epsilon) u,\quad\forall u,v\in \mathbb{R}^m \]Thus we get $\tilde{B}^\top = \tilde{B}.$ Meanwhile, notice that $B$ is SPD with inner product induced by the matrix $A(\epsilon)A^\top(\epsilon)$:
		%\[
		%(\tilde{B}A(\epsilon)A^\top(\epsilon) v,v)_{A(\epsilon)A^\top(\epsilon)}>0\quad\forall v\in \mathbb{R}^m \backslash \{0\}. \] Let $H= A(\epsilon)A^\top(\epsilon) \tilde{B}A(\epsilon)A^\top(\epsilon)$, then $v^\top \tilde{B}v=v^\top (A(\epsilon)A^\top(\epsilon))^{-1}H(A(\epsilon)A^\top(\epsilon))^{-1} v >0$ for all $v\in \mathbb{R}^m \backslash \{0\}$. Therefore, $\tilde{B}^{-1}$ is SPD with ​​natural inner product​​.\fi 
		\begin{equation}\label{eq:mu-L-g}
			\rho(\epsilon,\omega)\dual{\bar {B}^{-1}_{\rm kacd}(\omega)y,y}\leq \dual{A(\epsilon)A^\top(\epsilon) y,y}\leq  \dual{\bar {B}^{-1}_{\rm kacd}(\omega)y,y}\quad\forall\,y\in\R^m,
		\end{equation}
		which is equivalent to 
		\begin{align}
			\label{eq:sc-L}
			g(y)\leq{}& g(z)+\dual{\nabla g(z),y-z}+\frac{1}{2}\nm{y-z}^2_{\bar {B}^{-1}_{\rm kacd}(\omega)}\quad\forall\,y,\,z\in\R^m,\\
			\label{eq:sc-g}
			g(y)\geq{}& g(z)+\dual{\nabla g(z),y-z}+\frac{\rho(\epsilon,\omega)}{2}\nm{y-z}^2_{\bar {B}^{-1}_{\rm kacd}(\omega)}\quad\forall\,y,\,z\in\R^m.
		\end{align}
		This key insight is crucial for the subsequent convergence rate analysis. We emphasize that by \cref{rem:rho0}, there exist $\rho_0(\omega)>0$ and $\epsilon_2\in(0,\bar{\epsilon}]$ such that $0<\rho_0(\omega)\leq \rho(\epsilon,\omega)\leq 1$ for all $\epsilon\in[0,\epsilon_2]$.

		\begin{thm}\label{thm:conv-ey0-ode-PNAG}
			Assume $A(\epsilon)$ has full row rank.
			Let $\{y_k\}_{k\geq 0}$ and $\{v_k\}_{k\geq 0}$ be generated by \cref{algo:kaacd}. Define the nonnegative function \begin{equation}\label{eq:Lxk}
				\mathcal L_k :={} g(y_k)-g(y^*)+\frac{\gamma_k}2\nm{v_{k}-y^*}_{\bar {B}^{-1}_{\rm kacd}(\omega)}^2.
			\end{equation} 
			Then we have the contraction property
			\begin{equation}\label{eq:Pdecaz-Lk-ey0}
				\mathcal{L}_{k+1} - \mathcal{L}_{k} \leq -\alpha_k \mathcal{L}_{k+1}.
			\end{equation}
			In addition, if $\gamma_0\geq\rho$, then
			\begin{equation}\label{eq:lk-rate}
				\mathcal{L}_k 
				\leq  \mathcal{L}_0\times\min\left\{\frac{4}{\gamma_0k^2},\,\left(1+\sqrt{\rho}\right)^{-k}\right\}.
			\end{equation}
			Consequently, we obtain
			\begin{equation}\label{eq:final-rate}
				%		\small
				\nm{x_k-x_{LS}}^2= \|y_k - y^*\|_{A(\epsilon)A^\top(\epsilon)}^2 \leq2 \mathcal{L}_0\times\min\left\{\frac{4}{\gamma_0k^2},\,\left(1+\sqrt{\rho}\right)^{-k}\right\},
			\end{equation}
			where $x_k = -A^\top(\epsilon) y_k$ denotes the sequence for the primal system \cref{eq:Ax=b}.
		\end{thm}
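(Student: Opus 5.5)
The plan is to run the standard Lyapunov/estimate-sequence argument for accelerated preconditioned gradient methods, working in the metric $\nm{\cdot}_{\bar B^{-1}}$ with $\bar B:=\bar B_{\rm kacd}(\omega)$. By \cref{eq:zk1}, the step $z_{k+1}=z_k-\bar B\nabla g(z_k)$ is exactly the gradient step of size one in this metric. The two inequalities \cref{eq:sc-L,eq:sc-g} say that $g$ is $1$-smooth and $\rho(\epsilon,\omega)$-strongly convex in this metric, and hence also $\rho$-strongly convex for any $\rho\in[0,\rho(\epsilon,\omega)]$. The function $g$ attains its minimum at $y^*$, and $\nabla g(y^*)=0$.

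\textbf{Step 1: the one-step contraction \cref{eq:Pdecaz-Lk-ey0}.} Write $\mathcal L_{k+1}-\mathcal L_k$ as the sum of $g(y_{k+1})-g(y_k)$ and $\tfrac{\gamma_{k+1}}{2}\nm{v_{k+1}-y^*}^2-\tfrac{\gamma_k}{2}\nm{v_k-y^*}^2$.

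For the second difference, use the line-\ref{algo:vk1} identity
\[
(\gamma_k+\rho\alpha_k)(v_{k+1}-v_k)=\rho\alpha_k(z_k-v_k)-\alpha_k\bar B\nabla g(z_k).
\]
Together with $\gamma_{k+1}(1+\alpha_k)=\gamma_k+\rho\alpha_k$, expand via $\nm{a}^2-\nm{b}^2=2(a-b,a)-\nm{a-b}^2$.

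For the function part, use line \ref{algo:yk1} and line \ref{algo:zk} to write $y_{k+1}-z_k$ in terms of $z_{k+1}-z_k$ and $v_{k+1}-v_k$. The key identity is $(1+\alpha_k)(y_{k+1}-z_k)=\alpha_k(v_{k+1}-v_k)$. Then apply \cref{eq:sc-L} at $z_k$, \cref{eq:sc-g} between $z_k$ and $y^*$ (weight $\alpha_k$), and convexity between $z_k$ and $y_k$.

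The resulting gradient cross terms $\dual{\nabla g(z_k),\cdot}$ cancel by the choice of $z_k$ as a convex combination. One remaining term must be negative: a squared norm of $\bar B\nabla g(z_k)$ with coefficient of the form $\tfrac{\alpha_k^2}{\gamma_k+\rho\alpha_k}-\tfrac{1}{1+\alpha_k}\cdot(\dots)$. This should be controlled by the quadratic relation $\alpha_k^2=\gamma_k(1+\alpha_k)$ from line \ref{algo:ak}. Note $\alpha_k^2-\gamma_k\alpha_k-\gamma_k=0$ matches the formula $(\gamma_k+\sqrt{\gamma_k^2+4\gamma_k})/2$.

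The main obstacle is exactly this bookkeeping. The descent of $g$ must be measured at $y_{k+1}$, not at $z_{k+1}$, so I would first show $g(y_{k+1})\le$ a combination involving $g(z_{k+1})$. If that fails, I would compare $y_{k+1}$ with the extrapolated point and use \cref{eq:sc-L} directly at $y_{k+1}$, with step $y_{k+1}-z_k=\tfrac{\alpha_k}{1+\alpha_k}(v_{k+1}-v_k)$. Once the pieces are assembled, all residual quadratic terms are nonpositive, which leaves $\mathcal L_{k+1}-\mathcal L_k\le-\alpha_k\mathcal L_{k+1}$. This mirrors \cite{Luo2022}.

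\textbf{Step 2: the rates \cref{eq:lk-rate}.} From Step 1, $\mathcal L_k\le\mathcal L_0\prod_{j<k}(1+\alpha_j)^{-1}$.

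If $\gamma_0\ge\rho$, then $\gamma_{k+1}-\rho=(\gamma_k-\rho)/(1+\alpha_k)\ge0$, so $\gamma_k\ge\rho$ for all $k$. Hence $\alpha_k\ge\sqrt{\gamma_k}\ge\sqrt\rho$, using $\alpha_k^2=\gamma_k(1+\alpha_k)\ge\gamma_k$, which gives the factor $(1+\sqrt\rho)^{-k}$.

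For the sublinear bound, set $\lambda_k:=\prod_{j<k}(1+\alpha_j)^{-1}$. Since $\gamma_{k+1}\ge\gamma_k/(1+\alpha_k)$, we get $\gamma_k\ge\gamma_0\lambda_k$. Then $\alpha_k^2\ge\gamma_0\lambda_k(1+\alpha_k)$, i.e. $(\lambda_k-\lambda_{k+1})^2\ge\gamma_0\lambda_{k+1}^2\lambda_k$. The standard argument on $1/\sqrt{\lambda_k}$ then yields $1/\sqrt{\lambda_{k+1}}-1/\sqrt{\lambda_k}\ge\sqrt{\gamma_0}/2$, hence $\lambda_k\le 4/(\gamma_0k^2)$.

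\textbf{Step 3: the final estimate \cref{eq:final-rate}.} Since $g$ is quadratic,
\[
g(y_k)-g(y^*)=\tfrac12\nm{y_k-y^*}^2_{A(\epsilon)A^\top(\epsilon)}\le\mathcal L_k.
\]
By \cref{thm:conv-kacd}(i)–(iii) with full row rank, $\nm{x_k-x_{LS}}^2=\nm{y_k-y^*}^2_{A(\epsilon)A^\top(\epsilon)}$. Indeed, $x_k-x_{LS}=-A^\top(\epsilon)(y_k-y^*)$ because $x_{LS}=-A^\top(\epsilon)y^*$. Combining gives \cref{eq:final-rate}.
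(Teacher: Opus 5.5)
Your plan follows the paper's proof and it does close. The ingredients match: the same Lyapunov function, the expansion of the weighted distance at $v_{k+1}$, \cref{eq:sc-g} between $z_k$ and $y^*$, \cref{eq:sc-L} at $z_k$ evaluated at $y_{k+1}$ via $(1+\alpha_k)(y_{k+1}-z_k)=\alpha_k(v_{k+1}-v_k)$, convexity between $z_k$ and $y_k$, and finally $\alpha_k^2=\gamma_k(1+\alpha_k)$. Step 3 is also identical to the paper's. Your fallback in Step 1 is exactly what the paper does. The paper never compares with $g(z_{k+1})$: $z_{k+1}$ enters only through $z_{k+1}-z_k=-\bar B_{\rm kacd}(\omega)\nabla g(z_k)$ inside the $v$-update, so your first option is unnecessary.

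The bookkeeping you defer works, but not the way you predict. The leftover is not a multiple of $\nm{\bar B_{\rm kacd}(\omega)\nabla g(z_k)}^2$; that only happens when $\rho=0$. Apply the three inequalities and discard the nonpositive $(\rho-\rho(\epsilon,\omega))$ term from \cref{eq:sc-g}; what remains is exactly $-\frac{\rho\alpha_k}{2}\nm{z_k-v_{k+1}}^2_{\bar B^{-1}_{\rm kacd}(\omega)}$. The paper sees this at once by writing the $v$-update implicitly,
\[
\gamma_k(v_{k+1}-v_k)=\rho\alpha_k(z_k-v_{k+1})-\alpha_k\bar B_{\rm kacd}(\omega)\nabla g(z_k).
\]
This is the form that fits your expansion at $a=v_{k+1}-y^*$. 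After the three-point identity for $2\dual{z_k-v_{k+1},v_{k+1}-y^*}_{\bar B^{-1}_{\rm kacd}(\omega)}$, the only other term left is $\frac12\bigl(\frac{\alpha_k^2}{1+\alpha_k}-\gamma_k\bigr)\nm{v_{k+1}-v_k}^2_{\bar B^{-1}_{\rm kacd}(\omega)}$, which vanishes by the choice of $\alpha_k$.

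If you carry your explicit form through instead, set $\beta=\gamma_k+\rho\alpha_k$, $G=\bar B_{\rm kacd}(\omega)\nabla g(z_k)$, with all norms in $\bar B^{-1}_{\rm kacd}(\omega)$. Expanding $\frac{\beta}{2}\nm{v_{k+1}-y^*}^2$ directly, the leftover is
\[
-\frac{\alpha_k^2}{2\beta}\nm{G}^2-\frac{\gamma_k\rho\alpha_k}{2\beta}\nm{z_k-v_k}^2+\frac{\gamma_k}{2\beta^2}\nm{\rho\alpha_k(z_k-v_k)-\alpha_kG}^2 .
\]
The last term, which is the \cref{eq:sc-L} contribution, is positive. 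So your claim that ``all residual quadratic terms are nonpositive'' fails term by term when $\rho>0$. To fix it you must use $\nm{\lambda x+(1-\lambda)y}^2=\lambda\nm{x}^2+(1-\lambda)\nm{y}^2-\lambda(1-\lambda)\nm{x-y}^2$ with $\lambda=\rho\alpha_k/\beta$. That recombines the three terms into the same $-\frac{\rho\alpha_k}{2}\nm{z_k-v_{k+1}}^2$.

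Step 2 is a valid self-contained substitute for the paper's appeal to Lemma B.2 of \cite{Luo2022}, except for one slip. From $\alpha_k^2\ge\gamma_0\lambda_k(1+\alpha_k)$ with $\alpha_k=(\lambda_k-\lambda_{k+1})/\lambda_{k+1}$ you get $(\lambda_k-\lambda_{k+1})^2\ge\gamma_0\lambda_k^2\lambda_{k+1}$, not $\gamma_0\lambda_{k+1}^2\lambda_k$. The version you wrote is weaker and only yields $1/\sqrt{\lambda_{k+1}}-1/\sqrt{\lambda_k}\ge\sqrt{\gamma_0}\sqrt{\lambda_{k+1}}/(\sqrt{\lambda_k}+\sqrt{\lambda_{k+1}})$. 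The correct inequality gives the increment $\sqrt{\gamma_0}/2$, and hence $\lambda_k\le 4/(\gamma_0k^2)$.
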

		
		\begin{proof}
			Let us start with the difference
			\[
			\begin{aligned}
				\mathcal L_{k+1}-\mathcal L_{k}
				={}& g(y_{k+1})-g(y_k)+
				\frac{\alpha_k}{2}(\rho - \gamma_{k+1})
				\nm{v_{k+1}-y^*}^2_{\bar {B}^{-1}_{\rm kacd}(\omega)}\\
				{}&	+\gamma_k
				\dual{v_{k+1}-v_k, v_{k+1} -y^*}_{\bar {B}^{-1}_{\rm kacd}(\omega)} -
				\frac{\gamma_k}2
				\nm{v_{k+1}-v_k}^2_{\bar {B}^{-1}_{\rm kacd}(\omega)}.
			\end{aligned}
			\]
			From \cref{eq:zk1} and the Line \ref{algo:vk1} of \cref{algo:kaacd}, it is clear that 
			\[
			\gamma_k\frac{v_{k+1}-v_{k}}{\alpha_k} = \rho (z_{k} - v_{k+1}) - \bar {B}_{\rm kacd}(\omega)\nabla g(z_k),
			\]
			which implies 
			\[\small
			\begin{aligned}
				{}&\gamma_k\dual{v_{k+1}-v_k, v_{k+1} -y^*}_{\bar {B}^{-1}_{\rm kacd}(\omega)} \\
				= {}&
				\rho \alpha_k\dual{z_k - v_{k+1}, v_{k+1} - y^{*}}_{\bar {B}^{-1}_{\rm kacd}(\omega)}
				-
				\alpha_k \dual{\nabla g(z_k), v_{k+1} - y^{*}}\\
				={}&	\rho \alpha_k\dual{z_k - v_{k+1}, v_{k+1} - y^{*}}_{\bar {B}^{-1}_{\rm kacd}(\omega)}
				-\alpha_k \dual{ \nabla g(z_k), z_{k} - y^{*}}-\alpha_k \dual{ \nabla g(z_k), v_{k+1} - z_k}.
			\end{aligned}
			\]
			We further split the first term as follows:
			\begin{equation*}
				\begin{aligned}
					{}&2\dual{z_k - v_{k+1}, v_{k+1} - y^{*}}_{\bar {B}^{-1}_{\rm kacd}(\omega)} \\
					={}& \left\|z_k-y^{*}\right\|^{2}_{\bar {B}^{-1}_{\rm kacd}(\omega)}
					-\|z_k-v_{k+1}\|^{2}_{\bar {B}^{-1}_{\rm kacd}(\omega)}
					-\left\|v_{k+1}-y^{*}\right\|^{2}_{\bar {B}^{-1}_{\rm kacd}(\omega)}.
				\end{aligned}
			\end{equation*}
			Then it follows that
			\[
			\begin{aligned}
				\mathcal L_{k+1}-\mathcal L_{k}
				={}& g(y_{k+1})-g(y_k)		- \alpha_k\dual{ \nabla g(z_k), v_{k+1} - z_{k}} -
				\frac{\alpha_k\gamma_{k+1}}{2}
				\nm{v_{k+1}-y^*}^2_{\bar {B}^{-1}_{\rm kacd}(\omega)}\\
				{}&	 -
				\frac{\gamma_k}2
				\nm{v_{k+1}-v_k}^2_{\bar {B}^{-1}_{\rm kacd}(\omega)}	-\frac{\rho  \alpha_k}{2}\|z_k-v_{k+1}\|^{2}_{\bar {B}^{-1}_{\rm kacd}(\omega)} 
				\\
				{}&\quad+\frac{\rho \alpha_k}{2}\left\|z_k-y^{*}\right\|^{2}_{\bar {B}^{-1}_{\rm kacd}(\omega)}- \alpha_k\dual{ \nabla g(z_k), z_k - y^{*}} .
			\end{aligned}
			\]
			Since $\rho \leq\rho(\epsilon,\omega)$, in view of \cref{eq:sc-g}, we have
			\[
			\begin{aligned}
				{}&\frac{\rho \alpha_k}{2}\left\|z_k-y^{*}\right\|^{2}_{\bar {B}^{-1}_{\rm kacd}(\omega)}- \alpha_k\dual{ \nabla g(z_k), z_k - y^{*}}\\
				\leq{}&\frac{\alpha_k(\rho -\rho(\epsilon,\omega))}{2}\nm{z_k-y^*}^2_{\bar {B}^{-1}_{\rm kacd}(\omega)}-\alpha_k\left(g(z_k)-g(y^*)\right)\\
				\leq {}&-\alpha_k \left(g(y_{k+1})-g(y^*)\right)
				+\alpha_k\left(g(y_{k+1})-g(z_k)\right).
			\end{aligned}
			\]
			Hence, this gives 
			\[
			\begin{aligned}
				\mathcal L_{k+1}-\mathcal L_{k}
				\leq {}&-\alpha_k \left(g(y_{k+1})-g(y^*)\right)-
				\frac{\alpha_k\gamma_{k+1}}{2}
				\nm{v_{k+1}-y^*}^2_{\bar {B}^{-1}_{\rm kacd}(\omega)}\\
				{}&\quad +g(y_{k+1})-g(y_k)		- \alpha_k\dual{ \nabla g(z_k), v_{k+1} - z_{k}}\\
				{}&	\qquad +\alpha_k\left(g(y_{k+1})-g(z_k)\right)-
				\frac{\gamma_k}2
				\nm{v_{k+1}-v_k}^2_{\bar {B}^{-1}_{\rm kacd}(\omega)}	 \\
				={}&-\alpha_k  {\mathcal L}_{k+1}
				-	\frac{\gamma_k}2\nm{v_{k+1}-v_k}^2_{\bar {B}^{-1}_{\rm kacd}(\omega)}- \alpha_k\dual{ \nabla g(z_k), v_{k+1} - z_{k}}
				\\
				{}&\quad +g(y_{k+1})-g(y_k)		+\alpha_k\left(g(y_{k+1})-g(z_k)\right).
			\end{aligned}
			\]
			Thanks to \cref{eq:sc-L}, we have 
			\[
			\begin{aligned}
				g(y_{k+1}) \leq{}& g(z_k) + \langle \nabla g(z_k), y_{k+1} - z_k \rangle + \frac{1}{2} \| y_{k+1} - z_k \|_{\bar {B}^{-1}_{\rm kacd}(\omega)}^2.
			\end{aligned}
			\]
			The update rules for $y_{k+1}$ and $z_k$ (cf.the Lines \ref{algo:zk} and \ref{algo:yk1} of \cref{algo:kaacd}) are
			\[
			z_k = \frac{y_k + \alpha_k v_k}{1+\alpha_k}, \quad y_{k+1} = \frac{y_k + \alpha_k v_{k+1}}{1+\alpha_k},
			\]
			which yields that $		y_{k+1} - z_k = \frac{\alpha_k}{1+\alpha_k} (v_{k+1} - v_k)$. Hence, we obtain
			\[
			g(y_{k+1}) - g(z_k) \le \frac{\alpha_k}{1+\alpha_k} \langle \nabla g(z_k), v_{k+1}-v_k \rangle + \frac{1}{2}\left(\frac{\alpha_k}{1+\alpha_k}\right)^2 \| v_{k+1} - v_k \|_{\bar {B}^{-1}_{\rm kacd}(\omega)}^2,
			\]
			and by the convexity of $g$, it follows that $	g(z_{k}) \leq{} g(y_k) + \langle \nabla g(z_k), z_{k} - y_k \rangle  			=
			g(y_k) +\alpha_k \langle \nabla g(z_k), v_{k} - z_k \rangle $.
			Consequently, we arrive at
			\[\small
			\begin{aligned}
				\mathcal L_{k+1}-\mathcal L_{k}
				\leq {}&-\alpha_k  {\mathcal L}_{k+1}
				-	\frac{\gamma_k}2\nm{v_{k+1}-v_k}^2_{\bar {B}^{-1}_{\rm kacd}(\omega)}- \alpha_k\dual{ \nabla g(z_k), v_{k+1} - z_{k}}
				\\
				{}&\quad +g(z_{k})-g(y_k)+ \alpha_k \langle \nabla g(z_k), v_{k+1}-v_k \rangle + \frac{1}{2}\frac{\alpha_k^2}{1+\alpha_k} \| v_{k+1} - v_k \|_{\bar {B}^{-1}_{\rm kacd}(\omega)}^2\\
				\leq{}& -\alpha_k  {\mathcal L}_{k+1}
				+	\frac{1}{2(1+\alpha_k)}\left(\alpha_k^2-\gamma_k(1+\alpha_k)\right)\nm{v_{k+1}-v_k}^2_{\bar {B}^{-1}_{\rm kacd}(\omega)}.
			\end{aligned}
			\]
			By the Line \ref{algo:ak} of \cref{algo:kaacd}, it is clear that $\alpha_k^2=\gamma_k(1+\alpha_k)$, which leads to \cref{eq:Pdecaz-Lk-ey0}.
			
			Invoking \cite[Lemma B.2]{Luo2022}, it is easy to conclude that 
			\[
			(1+\alpha_0)^{-1}(1+\alpha_1)^{-1}\cdots(1+\alpha_{k-1})^{-1}\leq \min\left\{\frac{4}{\gamma_0k^2},\,\left(1+\sqrt{\rho}\right)^{-k}\right\}.
			\]
			As \cref{eq:Pdecaz-Lk-ey0} implies $\mathcal L_k\leq \mathcal L_0\times 	(1+\alpha_0)^{-1}(1+\alpha_1)^{-1}\cdots(1+\alpha_{k-1})^{-1}$, we obtain \cref{eq:lk-rate} immediately. The final rate \cref{eq:final-rate} follows from the fact $\nm{y_k-y^*}_{A^\top(\epsilon)A(\epsilon)}^2=2(g(y_k)-g(y^*))\leq2\mathcal L_k$. This concludes the proof.
		\end{proof}
		
		\begin{rem}
			If $\gamma_0=\rho =\rho(\epsilon,\omega)$, then by \cref{thm:conv-ey0-ode-PNAG}, \cref{algo:kaacd} converges with a linear rate $O((1+\sqrt{\rho(\epsilon,\omega)})^{-k})$, which is an improvement of the rate $O((1-\rho(\epsilon,\omega))^{k})$ of \cref{algo:symkacd} (cf.\cref{thm:conv-kacd}). On the other hand, if we take $\rho =0$, then we get the  sublinear rate $O(1/k^2)$. This coincides with the optimal rates of accelerated gradient methods for convex and strongly convex objectives \cite{nesterov_lectures_2018}. Note that the work \cite{2015An} also present an accelerated (randomized) Kaczmarz method with the sublinear rate $O(1/k^2)$ and the linear rate $O((1+\sqrt{\rho_0}/(2m))^{-k}),\,\rho_0=\lambda^+_{\min}(A^\top(\epsilon)A(\epsilon))$, which is also an improvement of the classical Kaczmarz (cf.\cref{eq:rate-kacz}) but not robust for nearly singular system  since $\lambda^+_{\min}(A^\top(\epsilon)A(\epsilon))$ goes to 0 as $\epsilon\to0+$; see \cref{lem:up-bd-lambda-min+}.
		\end{rem}

\section{Numerical Experiments}
\label{sec:experiments}

In this section, we provide some numerical tests to validate the practical performance of our two main methods: the kernel-augmented coordinate descent \cref{eq:kacd} and the kernel-augmented accelerated coordinate descent (KaACD) (cf.\cref{algo:kaacd}).  The rest two methods: the kernel-augmented Kaczmarz \cref{eq:kak} and the symmetrized version of KaCD (cf.\cref{algo:symkacd}), are not reported here because (i) \cref{eq:kak} is impractical but its equivalent form \cref{eq:kacd} is practical and (ii) \cref{algo:symkacd} is served as a preconditioned coordinate descent subroutine for \cref{algo:kaacd}. 

For detailed comparison, we choose four baseline algorithms: 
\begin{itemize}
	\item coordinate descent (CD) cf.\cref{eq:cd-intro};
	\item accelerated coordinate descent (ACDM) \cite{nesterov_efficiency_2012};	
	\item randomized reshuffling Kaczmarz (RRK) \cite{han_simple_2025};	
	\item accelerated randomized Kaczmarz (ARK) \cite{2015An}.
\end{itemize}
Similarly with our KaCD and KaACD, the first two are coordinate type methods, but the other two are Kaczmarz type methods with random row action approach. Note also that ACDM itself uses randomly technique. In \cref{sec:6-1,sec:6-2}, we consider respectively a simple tridiagonal case and  the randomly generated data, and mainly focus on the comparison with CD. Then in \cref{sec:6-3}, we report the results of all methods on a sample matrix from a real-world data set. 

In all cases, the matrix $A(\epsilon)$ is nearly singular with respect to a small parameter $\epsilon$. The relaxation parameters are chosen as follows:  $\omega = 0.9 \times 2/\delta_{\max}(A(\epsilon))$ for KaCD and   $\omega= 0.9 \times 2/(1 + \delta_{\max}(A(\epsilon)))$ for KaCD and KaACD. For KaACD, we take the convexity parameter $\rho = 0.9 \times \rho_0(\omega) $, where $\rho_0(\omega)$ is given by \cref{eq:rhow}. For ACDM and ARK, the convexity parameters are the same $	\delta_{\min}(A(\epsilon))={}\lambda^+_{\min}(A^\top(\epsilon)D_\epsilon^{-1}A(\epsilon))$.
%For a given matrix \( A \in \mathbb{R}^{m \times n} \), we construct the consistent linear system \cref{eq:Ax=b} by setting \( b = A\cdot \text{rand}(n,1) \). 
%and $0<\rho_0(\omega)\leq\rho(\epsilon,\omega)\leq 1$.  
For all methods,  the initial guesses are zero vectors and the relative stopping  criteria are \( \|A(\epsilon)A^\top(\epsilon) y_k + b\|/\|b\| < 10^{-6} \) for coordinate type methods and \( \|A(\epsilon)x_k - b\|/\|b\|< 10^{-6} \) for Kaczmarz type methods.

%\subsection{Synthetic Data}
%\label{subsec:synthetic}
%
%We first examine the algorithms on synthetic examples designed to simulate nearly singular systems, where the condition number grows large as a parameter \( \epsilon \) approaches zero.

\subsection{Example 1}
\label{sec:6-1}

Consider the tridiagonal matrix in \cref{ex:eg3-8}.
%from \cite{lee_robust_2007}:
%\[
%A(\epsilon) = A(0) + \epsilon I, \quad \text{where } A(0) = 
%\begin{bmatrix}
%	1 & -1 & 0 \\
%	-1 & 2 & -1 \\
%	0 & -1 & 1
%\end{bmatrix}.
%\]
%The matrix \( A(0) \) has rank 2, and \( A(\epsilon) \) is nearly singular for small \( \epsilon \). 
%We test with values \( \epsilon = 1/5^k \) for \( k=1,2,3,4 \).
The numerical results are summarized in Table~\ref{tab:eg1} which show that KaCD significantly outperforms CD and the  accelerated variant KaACD further reduces number of iterations.
%, and both maintain robust performance as \( \epsilon \) decreases, while CD deteriorates sharply.

%\begin{figure}[H]
%	\centering
%	\begin{subfigure}{0.3\textwidth}
	%		\centering
	%		\includegraphics[width=\linewidth]{../fig/eg6.1.1sanduijiao/eg1_1CD_nokernel}
	%		\caption{\text{CD}}
	%		\label{../fig:eg1_a}
	%	\end{subfigure}
%	\hspace{0.5em}
%	\begin{subfigure}{0.3\textwidth}
	%		\centering
	%		\includegraphics[width=\linewidth]{../fig/eg6.1.1sanduijiao/eg1_1CD_kernel}
	%		\caption{KaCD}
	%		\label{../fig:eg1_b}
	%	\end{subfigure}
%	\hspace{0.5em}
%	\begin{subfigure}{0.3\textwidth}
	%		\centering
	%		\includegraphics[width=\linewidth]{../fig/eg6.1.1sanduijiao/eg1_1A_KACD}
	%		\caption{KaACD}
	%		\label{../fig:eg1_c}
	%	\end{subfigure}
%	\caption{Convergence behavior of CD, KaCD, and KaACD for solving the tridiagonal linear system with \( \epsilon = 1/5^k \) (Example~\ref{example:6.1}).}
%	\label{../fig:eg1}
%\end{figure}
\renewcommand{\arraystretch}{1.2}
\begin{table}[H]
	\centering
	%	\small
	\setlength{\tabcolsep}{4pt}
	\caption{Number of iteration of CD, KaCD, and KaACD for Example 1.}
	\label{tab:eg1}
	\begin{tabular}{@{}ccccc@{}}
		\toprule
		\(\epsilon\) & \(1/5\) & \(1/5^2\) & \(1/5^3\) & \(1/5^4\)  \\
		\midrule
		%		CD & 1003 & 20278 & 449809 & 9086890 \\
		CD & $1.0\times 10^3$ & $2.0\times 10^4$ & $4.5\times 10^5$ & $9.1\times 10^6$ \\
		%		\midrule
		KaCD  &   32  &  37   &  33  & 33       \\
		%		\midrule
		KaACD  &   20  &  21   &  20  &  20\\
		\bottomrule
	\end{tabular}
\end{table}

\subsection{Example 2}
\label{sec:6-2}
%We construct a random test matrix $A = (a_{ij}) \in \mathbb{R}^{m \times n}$ with $m < n$ and a target rank $r < m$. Let $\xi_i \stackrel{\text{i.i.d.}}{\sim} U(0,1)$ for $i=1,\dots,m$, and $U_{ij} \stackrel{\text{i.i.d.}}{\sim} U(0,1)$ be independent uniform random variables. We set $\epsilon = 1/3^k.$ Define the entries of $A$ as follows:
%1. First column: $a_{i1} = 1 + \xi_i$, $i=1,\dots,m$.
%2. Columns $j=2,\dots,m$:
%\[
%a_{ij} =
%\begin{cases}
%	1, & i = j \text{ and } i \leq r, \\
%	\epsilon, & i = j \text{ and } i > r, \\
%	\epsilon, & i \neq j \text{ and } U_{ij} > 0.5, \\
%	0, & \text{otherwise}.
%\end{cases}
%\]
%3. Columns $j=m+1,\dots,n$:
%\[
%a_{ij} =
%\begin{cases}
%	\epsilon, & U_{ij} > 0.8, \\
%	0, & \text{otherwise}.
%\end{cases}
%\]
%Then $\dim\dappker{A(\epsilon)} = m -r $. 
To satisfy \cref{assum:A0-A1,def:nsp}, we generate $A(\epsilon)$ randomly in three scenarios:
\begin{itemize}
	\item Case (a): \( m=50, n=80, r=45, \dim\dappker{A(\epsilon)} =5 \).
	\item Case (b): \( m=50, n=800, r=40,\dim\dappker{A(\epsilon)} =10 \).
	\item Case (c): \( m=30, n=5000, r=20,\dim\dappker{A(\epsilon)} =10 \).
\end{itemize}
All results are given in \cref{tab:example_6_1_4,tab:example_6_1_4_800,tab:example_6_1_4_5000}. Again, our kernel-augmented methods are highly robust and the accelerated variant KaACD performs much better than CD and KaCD. 
%\begin{figure}[H]
%	\centering
%	\begin{subfigure}{0.3\textwidth}
	%		\centering
	%	\includegraphics[width=\linewidth]{../fig/eg6.1.4m=50,n=80/eg6.1.4m=50,n=80,CD}
	%		\caption{CD}
	%		\label{../fig:eg6_1_4_a}
	%	\end{subfigure}
%	\hspace{0.5em}
%	\begin{subfigure}{0.3\textwidth}
	%		\centering
	%	\includegraphics[width=\linewidth]{../fig/eg6.1.4m=50,n=80/eg6.1.4m=50,n=80,KACD}
	%		\caption{KaCD}
	%		\label{../fig:eg6_1_4_b}
	%	\end{subfigure}
%	\hspace{0.5em}
%	\begin{subfigure}{0.3\textwidth}
	%		\centering
	%		\includegraphics[width=\linewidth]{../fig/eg6.1.4m=50,n=80/eg6.1.4m=50,n=80AKACD}
	%		\caption{KaACD}
	%		\label{../fig:eg6_1_4_c}
	%	\end{subfigure}
%	\caption{Convergence behavior of CD, KaCD, and  KaACD for solving the linear system with \( m=50, n=80, \epsilon=1/3^k \) (Case (a) of Example~\ref{example:6.3}).}
%	\label{../fig:example_6_1_4}
%\end{figure}
%\renewcommand{\arraystretch}{1.2}
\begin{table}[H]
	\centering
	%	\small
	\setlength{\tabcolsep}{6pt}
	\caption{Numerical results of CD, KaCD, and KaACD for Case (a) of Example 2.}
	\label{tab:example_6_1_4}
	\begin{tabular}{@{}cccccc@{}}
		\toprule
		\(\epsilon\) & \(1/3\) & \(1/3^2\) & \(1/3^3\) & \(1/3^4\) & \(1/3^5\) \\
		\midrule
		CD & 2983 & 7846 & 8201 & 27752 & 89596 \\
		%		\midrule
		KaCD & 1940 & 2522 & 2315 & 1614 & 1556 \\
		%		\midrule
		KaACD & 300 & 276 & 244 & 248 & 238 \\
		\bottomrule
	\end{tabular}
\end{table}

%\begin{figure}[H]
%	\centering
%	\begin{subfigure}{0.3\textwidth}
	%		\centering
	%		\includegraphics[width=\linewidth]{../fig/eg6.1.4m=50,n=800/eg6.1.4m=50,n=800,CD}
	%		\caption{CD}
	%		\label{../fig:eg6_1_4_800_a}
	%	\end{subfigure}
%	\hspace{0.5em}
%	\begin{subfigure}{0.3\textwidth}
	%		\centering
	%		\includegraphics[width=\linewidth]{../fig/eg6.1.4m=50,n=800/eg6.1.4m=50,n=800,KACD}
	%		\caption{KaCD}
	%		\label{../fig:eg6_1_4_800_b}
	%	\end{subfigure}
%	\hspace{0.5em}
%	\begin{subfigure}{0.3\textwidth}
	%		\centering
	%		\includegraphics[width=\linewidth]{../fig/eg6.1.4m=50,n=800/eg6.1.4m=50,n=800,A_KACD}
	%		\caption{KaACD}
	%		\label{../fig:eg6_1_4_800_c}
	%	\end{subfigure}
%	\caption{Convergence behavior of CD, KaCD, and KaACD for solving the linear system with \( m=50, n=800, \epsilon=1/3^k \) (Case (b) of Example~\ref{example:6.3}).}
%	\label{../fig:example_6_1_4_800}
%\end{figure}

\begin{table}[H]
	\centering
	%	\small
	\setlength{\tabcolsep}{6pt}
	\caption{Numerical results of CD, KaCD, and KaACD for Case (b) of Example 2.}
	\label{tab:example_6_1_4_800}
	\begin{tabular}{@{}cccccc@{}}
		\toprule
		\(\epsilon\) & \(1/3\) & \(1/3^2\) & \(1/3^3\) & \(1/3^4\) & \(1/3^5\) \\
		\midrule
		CD & 182 & 541 & 2820 & 16141 & 60767 \\
		%		\midrule
		KaCD & 172 & 477 & 1571 & 1594 & 1588 \\
		%		\midrule
		KaACD & 79 & 140 & 239 & 242 & 239 \\
		\bottomrule
	\end{tabular}
\end{table}

%\begin{figure}[H]
%	\centering
%	\begin{subfigure}{0.3\textwidth}
	%		\centering
	%		\includegraphics[width=\linewidth]{../fig/eg6.1.4m=30,n=5000/eg6.1.4m=30,n=5000,CD}
	%		\caption{CD}
	%		\label{../fig:eg6_1_4_5000_a}
	%	\end{subfigure}
%	\hspace{0.5em}
%	\begin{subfigure}{0.3\textwidth}
	%		\centering
	%		\includegraphics[width=\linewidth]{../fig/eg6.1.4m=30,n=5000/eg6.1.4m=30,n=5000,KACD}
	%		\caption{KACD}
	%		\label{../fig:eg6_1_4_5000_b}
	%	\end{subfigure}
%	\hspace{0.5em}
%	\begin{subfigure}{0.3\textwidth}
	%		\centering
	%		\includegraphics[width=\linewidth]{../fig/eg6.1.4m=30,n=5000/eg6.1.4m=30,n=5000,AKACD}
	%		\caption{KaACD}
	%		\label{../fig:eg6_1_4_5000_c}
	%	\end{subfigure}
%	\caption{Convergence behavior of CD, KaCD, and KaACD for solving the linear system with \( m=30, n=5000, \epsilon=1/3 \) (Case (c) of Example~\ref{example:6.3}).}
%	\label{../fig:example_6_1_4_5000}
%\end{figure}

\begin{table}[H]
	\centering
	%	\small
	\setlength{\tabcolsep}{6pt}
	\caption{Numerical results of CD, KaCD, and KaACD for Case (c) of Example 2.}
	\label{tab:example_6_1_4_5000}
	\begin{tabular}{@{}cccccc@{}}
		\toprule
		\(\epsilon\) &  \(1/3^2\) & \(1/3^3\) & \(1/3^4\) & \(1/3^5\)&  \(1/3^6\)\\
		\midrule
		CD &   85 & 360 & 1843 & 12983&56771 \\
		%		\midrule
		KaCD &   97 & 310 & 979 & 907 &903\\
		%		\midrule
		KaACD &   49 & 109 & 179 & 176 &176\\
		\bottomrule
	\end{tabular}
	%	\begin{tabular}{@{}ccccccc@{}}
		%		\toprule
		%		\(\epsilon\) & \(1/3\) & \(1/3^2\) & \(1/3^3\) & \(1/3^4\) & \(1/3^5\)&  \(1/3^6\)\\
		%%		\midrule
		%		CD & 54 & 85 & 360 & 1843 & 12983&56771 \\
		%%		\midrule
		%		KaCD & 62 & 97 & 310 & 979 & 907 &903\\
		%%		\midrule
		%		KaACD & 33 & 48 & 95 & 178 & 182 &172\\
		%		\bottomrule
		%	\end{tabular}
\end{table}

\subsection{Example 3}
\label{sec:6-3}
This example chooses a sample matrix from the {\it suitesparse matrix collection} \cite{kolodziej2019suitesparse}. We start from a $453 \times 453$ matrix and extract a full-row-rank submatrix $A \in \mathbb{R}^{50 \times 453}$. To generate test problems with adjustable rank deficiency, we define a parameterized matrix $A(\epsilon)$ via the singular value decomposition of $A = U \Sigma V^\top$:
\[
A(\epsilon) = U \Sigma(\epsilon) V^\top, \quad \text{where } \Sigma(\epsilon)_{ii} = \begin{cases}
	\epsilon \cdot \Sigma_{ii}, & \text{for the 5 smallest singular values},\\
	\Sigma_{ii}, & \text{otherwise}.
\end{cases}
\]
We set $\epsilon=1/2^k, k=1,2,...,6.$ As $\epsilon$ goes to 0, the rank of $A(\epsilon)$ drops abruptly from 50 to 45. 

%For coordinate type mehtods, the true solution is set as \( y^* = \text{randn}(m,1) \), and the right-hand side is computed as \( b = A(\epsilon)A(\epsilon)^\top y^* \) where $y^*$ is the exact solution. For Kaczmarz type methods, the true solution is set as \( x^* = \text{randn}(n,1) \), and the right-hand side is computed as \( b = A(\epsilon) x^* \) where $x^*$ is the exact solution.  We compare five algorithms for solving this linear system: the randomized reshuffling Kaczmarz (RRK) \cite{han_simple_2025}, the accelerated randomized Kaczmarz (ARK) \cite{2015An},  accelerated coordinate descent\cite{nesterov_efficiency_2012} (ACDM), KaCD, and KaACD. Parameters for ACMD: $v_0 = y_0 = 0\in \mathbb{R}^m , a_0=1/m, b_0=2,\gamma_0=1/m $. Following \cite{nesterov_efficiency_2012}, the strong convexity parameter $\sigma$ required for the ACDM can be obtained by solving the generalized eigenvalue problem $(A(\epsilon) A(\epsilon)^\top) v = \lambda D_{\epsilon} v$. We then set $\sigma$ to the smallest generalized eigenvalue: $\sigma = \lambda_{\min}((A(\epsilon) A(\epsilon)^\top, D_{\epsilon})$.
%  Parameters for ARK: $v_0 = x_0 = 0 \in \mathbb{R}^n, \gamma_{0}=0,\lambda = \lambda_{\min}^{+}((A(\epsilon) A(\epsilon)^\top).$
%Han and Xie established a linear convergence analysis for the randonmized reshuffling Kaczmarz (RRK) method  \cite{han_simple_2025}, proving that RRK converges linearly to the minimum-norm solution with a tight dimension-independent bound.
%\kk{这部分文献要在intro里面加}

Table~\ref{tab:realworld} presents the numerical outputs. Since RRK, ARK and ACDM are randomized algorithms, their iteration counts have been divided by 50, i.e., the number of rows of $A(\epsilon)$. 
The results are striking: standard methods (RRK, ARK, CD, ACDM) are highly sensitive to the ill-conditioning property caused by small singular values. In contrast, both KaCD and KaACD show high robustness and the number of iterations leaves almost invariant. Particularly, KaACD is more efficient than KaCD and the iteration steps is about half of KaCD.
\begin{table}[H]
	\centering
	%	\small
	\setlength{\tabcolsep}{6pt}
	\caption{Numerical results of Example 3.}
	%	\caption{Iteration counts of six algorithms for solving the real-world linear system with varying \( \epsilon \) values under stopping criterion: \( \|AA^\top y_k + b\|/\|b\| < 10^{-7} \) for coordinate type methods and \( \|Ax_k - b\|/\|b\|< 10^{-7} \) for Kaczmarz type methods.}
	\label{tab:realworld}
	\begin{tabular}{@{}ccccccc@{}}
		\toprule
		\(\epsilon\) & \(1/2\) & \(1/2^2\) & \(1/2^3\) & \(1/2^4\) & \(1/2^5\) & \(1/2^6\) \\
		\midrule
		%		CD & 577 & 1343 & 2266 & 8583 & 14198 & 17319 \\
		%		\bf{KaCD} & 322 & 297 & 290 & 302 & 272 & 312 \\
		%		KaACD & 97 & 125 & 106 & 105 & 108 & 94 \\
		CD & 577 & 1343 & 2266 & 8583 & 14198 & 17319 \\      						
		\bf{KaCD} & \bf{409} & \bf{389} & \bf{398} & \bf{411} & \bf{370} & \bf{368} \\
		%\hline
		\bf{KaACD} & \bf{150} & \bf{164} & \bf{171} & \bf{177} & \bf{168} & \bf{173} \\	
		%		RRK\cite{han_simple_2025} & 18 & 67 & 217 & 821 & 2578 & 9645\\
		%			ARK \cite{2015An} & 562 & 1109 & 2395 & 4374 & 8451 & 12149  \\
		ARK \cite{2015An} & 609 & 1098 & 2262 & 4509 & 9309 & 13413\\        
		ACDM \cite{nesterov_efficiency_2012} & 946 & 2418 & 5804 & 9618 & 13608 & 30523  \\
		RRK \cite{han_simple_2025} & 1207 & 3714 & 13642 & 52789 & 210342 & 884266\\		
		\bottomrule
	\end{tabular}
\end{table}

\section{Conclusion}
\label{conclu}
In this work, we developed a robust kernel-augmented Kaczmarz (KaK) for nearly singular linear systems. The rate of convergence is uniformly stable even if the smallest nonzero singular value goes to zero. Also, we present two kernel-augmented coordinate descent methods as implementable and accelerated variants of KaK. Numerical results
are provided to validate the performance of the proposed methods compared with existing Kaczmarz type methods and coordinate type methods. An interesting future topic is extending our current framework to the linearly constrained case; see \cite{Luo2021,zeng_stochastic_2026}.

\appendix
\section{The Auxiliary Space Lemma}
The following vital lemma has already been proved by \cite[Lemma 2.4]{xu_method_2002}. We will provide an alternative proof from an optimization perspective.
\begin{lem}[\bf Auxiliary space lemma]
	\label{lem:inv-batB}
	Let $\boldsymbol{V}$ and $V$ be two Hilbert spaces with the inner products $(\cdot,\cdot)_{\boldsymbol{V}}$ and $(\cdot,\cdot)_{V}$. Assume $\mathcal{R}: \boldsymbol{V} \to V$ is surjective and denote by $\mathcal{R}^\star:V\to \bm V$ the adjoint operator of $\mathcal R$:
	\[
	\inner{\mathcal R^\star u,\bm v}_{\bm V}=\inner{u,\mathcal R\bm v}\quad\forall\,\bm v\in\bm V,u\in V.
	\]
	If $\boldsymbol{B}: \boldsymbol{V} \to \boldsymbol{V}$ is SPD then so is $B := \mathcal{R} \boldsymbol{B} \mathcal{R}^\star$ and
	\begin{equation}\label{eq:aux-lem}
		\left(B^{-1}v,v\right)_{V} = \inf_{\mathcal{R}\boldsymbol{v}=v} \left(\boldsymbol{B}^{-1}\boldsymbol{v},\boldsymbol{v}\right)_{\boldsymbol{V}} \quad \forall\,v\in V,
	\end{equation}
	where the infimum is attained uniquely at $\widehat{\boldsymbol{v}} = \boldsymbol{B}\mathcal{R}^\star B^{-1}v$.
\end{lem}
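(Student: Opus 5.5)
The proof splits into two parts: first show that $B=\mathcal R\bm B\mathcal R^\star$ is SPD, then prove \cref{eq:aux-lem} by treating the right-hand side as an equality-constrained quadratic minimization and identifying its unique minimizer.

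\textbf{Step 1: $B$ is SPD.} Symmetry is immediate from $(\mathcal R\bm B\mathcal R^\star)^\star=\mathcal R\bm B^\star\mathcal R^\star=B$. For positivity, write $(Bv,v)_V=(\bm B\mathcal R^\star v,\mathcal R^\star v)_{\bm V}\ge 0$, with equality only if $\mathcal R^\star v=0$. Surjectivity of $\mathcal R$ makes $\mathcal R^\star$ injective, since $(\mathcal R^\star v,\bm w)_{\bm V}=(v,\mathcal R\bm w)_V=0$ for all $\bm w$ forces $v=0$. Hence $B$ is SPD and $B^{-1}$ exists.

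\textbf{Step 2: the candidate minimizer is feasible.} Fix $v\in V$ and set $\widehat{\bm v}=\bm B\mathcal R^\star B^{-1}v$. Then $\mathcal R\widehat{\bm v}=BB^{-1}v=v$, so $\widehat{\bm v}$ satisfies the constraint, and
\[
(\bm B^{-1}\widehat{\bm v},\widehat{\bm v})_{\bm V}=(\mathcal R^\star B^{-1}v,\bm B\mathcal R^\star B^{-1}v)_{\bm V}=(B^{-1}v,BB^{-1}v)_V=(B^{-1}v,v)_V .
\]

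\textbf{Step 3: optimality and uniqueness.} Any feasible $\bm v$ can be written $\bm v=\widehat{\bm v}+\bm w$ with $\mathcal R\bm w=0$. The cross term vanishes:
\[
(\bm B^{-1}\widehat{\bm v},\bm w)_{\bm V}=(\mathcal R^\star B^{-1}v,\bm w)_{\bm V}=(B^{-1}v,\mathcal R\bm w)_V=0 .
\]
Consequently
\[
(\bm B^{-1}\bm v,\bm v)_{\bm V}=(B^{-1}v,v)_V+(\bm B^{-1}\bm w,\bm w)_{\bm V}\ge (B^{-1}v,v)_V .
\]
Equality holds iff $\bm w=0$, because $\bm B^{-1}$ is SPD. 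This proves both the identity and the unique attainment at $\widehat{\bm v}$.

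From the optimization viewpoint the paper advertises, Step 3 is exactly the Lagrange (KKT) condition for minimizing $\tfrac12(\bm B^{-1}\bm v,\bm v)_{\bm V}$ subject to $\mathcal R\bm v=v$. Stationarity gives $\bm B^{-1}\bm v=\mathcal R^\star\lambda$, and imposing the constraint yields $\lambda=B^{-1}v$, which reproduces $\widehat{\bm v}$.

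There is no real obstacle. The only point that needs care is that the injectivity of $\mathcal R^\star$ comes from surjectivity of $\mathcal R$, which uses finite dimensionality (or closed range in general); the paper's setting is finite dimensional.
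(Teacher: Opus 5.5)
Your proof is correct. It differs from the paper's in how optimality is established. The paper forms the Lagrangian $\mathcal{E}(\bm{v},\lambda)=(\bm{B}^{-1}\bm{v},\bm{v})_{\bm{V}}-(\lambda,\mathcal{R}\bm{v}-v)_V$ and solves the Euler--Lagrange system $2\bm{B}^{-1}\widehat{\bm{v}}=\mathcal{R}^\star\widehat{\lambda}$, $\mathcal{R}\widehat{\bm{v}}=v$. This gives $\widehat{\lambda}=2B^{-1}v$ and $\widehat{\bm{v}}=\bm{B}\mathcal{R}^\star B^{-1}v$, and the objective is then evaluated at that point. That this stationary point is the unique global minimizer is left implicit in the phrase ``convex quadratic optimization'': KKT points of a convex problem with linear constraints are global minimizers, and strict convexity gives uniqueness.

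You instead verify the candidate directly. You write the feasible set as $\widehat{\bm{v}}+\ker\mathcal{R}$ and check that $\bm{B}^{-1}\widehat{\bm{v}}=\mathcal{R}^\star B^{-1}v$ annihilates $\ker\mathcal{R}$. This yields the exact identity $(\bm{B}^{-1}\bm{v},\bm{v})_{\bm{V}}=(B^{-1}v,v)_V+(\bm{B}^{-1}\bm{w},\bm{w})_{\bm{V}}$, which gives minimality and uniqueness at once, with no appeal to optimization theory. It also quantifies the excess for any other feasible $\bm{v}$. The paper's route has the advantage of deriving the formula for $\widehat{\bm{v}}$ rather than requiring it to be guessed; as you note, your computation is the same stationarity condition read in reverse. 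You also supply the SPD argument for $B$ that the paper dismisses as easy.

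One small correction to your closing remark. Injectivity of $\mathcal{R}^\star$ follows from surjectivity of $\mathcal{R}$ purely algebraically; dense range of $\mathcal{R}$ would already suffice. Finite dimensionality is really used where you pass from $(Bv,v)_V>0$ for $v\neq 0$ to the invertibility of $B$.
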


\begin{proof}
	It is easy to see $B$ is symmetric and invertible and thus it is SPD. Treat the right hand side of \cref{eq:aux-lem} as an equality constrained convex quadratic optimization, the Lagrangian to which is
	\[
	\mathcal{E}(\boldsymbol{v},\lambda ):=\left(\boldsymbol{B}^{-1}\boldsymbol{v},\boldsymbol{v}\right)_{\boldsymbol{V}}-\left(\lambda ,\mathcal{R}\boldsymbol{v}-v\right)_{V}\quad\forall\,(\boldsymbol{v},\lambda )\in\boldsymbol{V}\times V.
	\]
	Look at the Euler-Lagrangian equations $	2\boldsymbol{B}^{-1}\widehat{\boldsymbol{v}} - \mathcal{R}^\star\widehat{\lambda } = 0$ and $\mathcal{R}\widehat{\boldsymbol{v}} = v$. With some elementary manipulations, we get the unique optimal solution $\widehat{\boldsymbol{v}}=\boldsymbol{B}\mathcal{R}^\star B^{-1}v$ and the Lagrange multiplier $\widehat{\lambda }=2B^{-1}v$, which implies immediately that
	\[
	\inf_{\mathcal{R}\boldsymbol{v}=v}\left(\boldsymbol{B}^{-1}\boldsymbol{v},\boldsymbol{v}\right)_{\boldsymbol{V}}=\left(\boldsymbol{B}^{-1}\widehat{\boldsymbol{v}},\widehat{\boldsymbol{v}}\right)_{\boldsymbol{V}}=\left(B^{-1}v,v\right)_{V}.
	\]
	This completes the proof of this lemma.
\end{proof}

\bibliographystyle{plain} 
%		\bibliographystyle{abbrv}
%\bibliography{../refs} 

\end{document}